\documentclass[10 pt]{amsart}
\usepackage{amssymb,amsmath,amsthm,amscd,mathrsfs, color}
\usepackage{pdfsync}
\usepackage{graphicx}
\usepackage[matrix,arrow,tips,curve]{xy}
\usepackage[english]{babel}
\usepackage[leqno]{amsmath}
\usepackage{amssymb,amsthm}
\usepackage{amscd}
\usepackage{enumerate}
\usepackage{fullpage}

\numberwithin{equation}{section}
\newtheorem{teo}{Theorem}[section]
\newtheorem{pro}[teo]{Proposition}
\newtheorem{lem}[teo]{Lemma}

\newtheorem{cor}[teo]{Corollary}

\newtheorem{teoalpha}{Theorem}

\newtheorem{coralpha}[teoalpha]{Corollary}

\theoremstyle{definition}
\newtheorem{defi}[teo]{Definition}
\newtheorem{exa}[teo]{Example}

\theoremstyle{remark}
\newtheorem{rem}[teo]{Remark}

\newenvironment{sis}{\left\{\begin{aligned}}{\end{aligned}\right.}


\newcommand{\w}{\widetilde}
\newcommand{\ov}{\overline}
\newcommand{\un}{\underline}

\newcommand{\wh}{\widehat}
\newcommand{\wt}{\widetilde}

\newcommand{\bbA}{{\mathbb A}}

\newcommand{\bbC}{{\mathbb C}}
\newcommand{\bbD}{{\mathbb D}}

\newcommand{\bbH}{{\mathbb H}}

\newcommand{\bbM}{{\mathbb M}}
\newcommand{\bbN}{{\mathbb N}}

\newcommand{\bbP}{{\mathbb P}}
\newcommand{\bbQ}{{\mathbb Q}}
\newcommand{\bbR}{{\mathbb R}}

\newcommand{\bbT}{{\mathbb T}}

\newcommand{\bbZ}{{\mathbb Z}}
\newcommand{\bbz}{{\mathbb Z}}


\newcommand{\Aut}{\operatorname{Aut}}
\newcommand{\Bir}{\operatorname{Bir}}
\newcommand{\Supp}{\operatorname{Supp}}

\newcommand{\Spec}{\operatorname{Spec}}
\newcommand{\Spf}{\operatorname{Spf}}
\newcommand{\Def}{\operatorname{Def}}
\newcommand{\Sym}{\operatorname{Sym}}
\newcommand{\Id}{\operatorname{Id}}

\newcommand{\Pic}{\operatorname{Pic}}

\newcommand{\conv}{\operatorname{conv}}
\newcommand{\Stab}{{\rm Stab}}

\newcommand{\id}{\operatorname{id}}
\newcommand{\rank}{\operatorname{rank}}

\newcommand{\II}{\mathbb{I}}
\newcommand{\age}{\operatorname{age}}

\newcommand{\Gm}{\mathbb{G}_m}
\newcommand{\GL}{\operatorname{GL}}
\newcommand{\SL}{\operatorname{SL}}

\newcommand{\OO}{\mathcal{O}}

\newcommand{\m}{\mathfrak{m}}


\newcommand{\Mgbar}{\ov{\operatorname{M}}_g}


\renewcommand{\OE}{\overset{\rightarrow}{E}}
\newcommand{\OS}{\overset{\rightarrow}{S}}
\newcommand{\el}[1][e]{\overset{\leftarrow}{#1}}
\newcommand{\er}[1][e]{\overset{\rightarrow}{#1}}


\newcommand{\Cir}{\operatorname{Cir}}
\newcommand{\Loops}{\operatorname{Loops}}

\begin{document}

\title[The universal compactified Jacobian]{The singularities and birational geometry of the  universal compactified Jacobian}

\author[Casalaina-Martin]{Sebastian Casalaina-Martin}
\address{University of Colorado, Department of Mathematics,
Campus Box 395, Boulder, CO 80309, USA}
\email{casa@math.colorado.edu}

\author[Kass]{Jesse Leo Kass}
\address{Department of Mathematics, University of South Carolina, 1523 Greene Street, Columbia, SC 29208, USA}
\email{kassj@math.sc.edu}

\author[Viviani]{Filippo Viviani}
\address{ Dipartimento di Matematica,
Universit\`a Roma Tre,
Largo S. Leonardo Murialdo 1,
00146 Roma, Italy}
\email{viviani@mat.uniroma3.it}

\thanks{The first author was supported by NSF grant DMS-1101333.  The second author was supported by NSF grant DMS-0502170. The third author  
has been partially  supported by  the MIUR--FIRB project \textit{Spazi di moduli e applicazioni}, by CMUC and by the FCT-grants   PTDC/MAT-GEO/0675/2012 and EXPL/MAT-GEO/1168/2013.}

\date{\today}

\subjclass[2010]{Primary 14D20, 14H40, 14E99 Secondary  14D15, 14H20. }

\keywords{Compactified Jacobian, compactified Picard scheme, nodal curve,  birational geometry}

\begin{abstract}
In this paper we establish that the singularities of the universal compactified Jacobian are  canonical if the genus is at least four.
As a corollary we determine the Kodaira dimension and the Iitaka fibration  of the universal compactified Jacobian for every  degree and genus.  We also determine   the birational automorphism group for every degree if the genus is  at least twelve.  This extends work of G. Farkas and A. Verra, as well as  that of G. Bini, C. Fontanari and the third author.
\end{abstract}

\maketitle

\bibliographystyle{amsalpha}

\section*{Introduction}

Jacobians of  non-singular curves are  principally polarized abelian varieties, which from the perspective of birational geometry   are among the simplest varieties.    On the other hand, for a family of non-singular curves, the relative Jacobian may exhibit more interesting birational behavior,  not necessarily  reflective of the birational geometry of the base.  For instance, over the moduli space of non-singular, genus $g\ge 2$, automorphism free curves $M_g^\circ$ there is a universal curve $C_g^\circ$, and consequently a universal Jacobian $\operatorname{Pic}^0(C_g^\circ/M_g^\circ)$.     In this paper, we investigate the birational geometry of this space and show for instance, that the Kodaira dimension of $\operatorname{Pic}^0(C_g^\circ/M_g^\circ)$ can be different from the Kodaira dimension of $M_g^\circ$.

More generally, for any integer $d$,  Caporaso \cite{caporaso} (see also \cite{Pan}) has constructed  a universal compactified Jacobian $\pi:\bar J_{d,g}\to \overline M_g$ over the moduli space of Deligne--Mumford stable curves;  this space has fiber over a non-singular, automorphism free curve $C$ given by the degree $d$ Jacobian $J^dC$.  In particular $\bar J_{0,g}$ provides a compactification of the universal Jacobian.
In this paper we focus on two main problems concerning the birational geometry of these spaces, namely determining the Kodaira dimension, and  determining the birational automorphism group.
These problems go back at least to Caporaso's work, and have been investigated recently by Farkas and Verra \cite{farkasverra} and  Bini, Fontanari and the third author \cite{BFV} in special cases.

Due to the work of \cite{BFV}, the main point needed to answer these questions in full generality is to provide a good description of the local structure of $\bar J_{d,g}$.  In this paper, we investigate this question in detail, providing an explicit description of the complete local ring at a point, as well as formulas for various invariants of the ring in terms of the dual graphs of the curves.  In particular, we
establish that $\bar J_{d,g}$ has canonical singularities.

\begin{teoalpha}\label{teoCS}
Assume that ${\rm char}(k)=0$. If $g\geq 4$, then the universal compactified  Jacobian $\bar J_{d,g}$ has  canonical  singularities for any $d\in \bbZ$.
\end{teoalpha}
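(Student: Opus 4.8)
The plan is to reduce the global statement about $\bar J_{d,g}$ to a local computation at each point, then to an explicit toric (or toric-like) model of the complete local ring, and finally to a discrepancy estimate for that model. First I would use the standard criterion that a variety has canonical singularities if and only if it is normal, $\mathbb{Q}$-Gorenstein, and for one (equivalently every) resolution $f\colon Y\to \bar J_{d,g}$ one has $K_Y = f^*K_{\bar J_{d,g}} + \sum a_i E_i$ with all $a_i\ge 0$. Normality and the $\mathbb{Q}$-Gorenstein property should follow from the explicit description of the complete local ring promised in the introduction (the local model is a quotient of a smooth variety, or a product of a smooth factor with a simple affine toric singularity coming from the combinatorics of the dual graph). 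The key reduction is that canonicity is local and, moreover, insensitive to smooth factors: it suffices to check it on the ``essential'' local factor attached to the dual graph $\Gamma$ of the curve $C$ underlying the point, together with the automorphism group acting on it.

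The main steps, in order, would be: (1) Fix a point of $\bar J_{d,g}$ lying over $[C]\in \overline M_g$ with dual graph $\Gamma$, and invoke the local structure theorem to write the complete local ring as (an invariant ring, under a finite group $\operatorname{Aut}$, of) a product of a power series ring with a ring of the form $k[[x_1,y_1,\dots,x_\delta,y_\delta, \text{(moduli parameters)}]]/(x_1y_1 - t_{e_1},\dots)$ — i.e.\ the versal deformation of the nodes glued to the deformation of $\operatorname{Pic}$, which is combinatorially a toric variety governed by the first homology of $\Gamma$. (2) Identify this toric model precisely: the cone (or the fan) is determined by the lattice $H_1(\Gamma,\mathbb{Z})$ sitting inside the edge lattice $\mathbb{Z}^{E(\Gamma)}$, so the singularities are exactly those of the affine toric variety associated to that sublattice arrangement. (3) Compute discrepancies for this toric variety by the combinatorial formula: for a toric resolution, the discrepancy of the exceptional divisor corresponding to a primitive lattice vector $v$ in the relevant cone equals $\langle m, v\rangle - 1$, where $m$ is the (unique, by Gorenstein-ness) linear functional cutting out the anticanonical class; canonicity becomes the statement that every primitive lattice point of the cone pairs to at least $1$ with $m$, equivalently lies on or above the ``$\le 1$'' hyperplane slice. (4) Verify this numerical condition, using $g\ge 4$; here the number of independent cycles of $\Gamma$ (at most $g$) and the structure of the edge/cycle lattices give the bound, and the case $g\le 3$ genuinely fails, which is why the hypothesis appears. (5) Finally, pass to the $\operatorname{Aut}$-quotient: since taking a quotient by a finite group can only improve discrepancies when the group acts without quasi-reflections in codimension one in the appropriate sense (Reid–Tai type criterion), check the age condition — sum of the ages of the eigenvalues of each group element is $\ge 1$ — again using $g\ge 4$ to control the possible automorphisms and their weights.

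I expect the main obstacle to be Step (3)–(4): turning the combinatorial description of the local model into a clean numerical discrepancy bound that is uniform over all stable dual graphs $\Gamma$ of genus $g\ge 4$. The toric cones that arise are not simplicial in general and depend intricately on how $H_1(\Gamma)$ embeds in $\mathbb{Z}^{E(\Gamma)}$, so one must either find a single slick inequality (likely phrased in terms of the ``circuit'' structure of $\Gamma$, e.g.\ via the matroid of $\Gamma$) or carry out a careful case analysis organized by the first Betti number of $\Gamma$ and the valences of its vertices. A secondary difficulty is bookkeeping the interaction between the toric factor and the automorphism action in Step (5): the automorphisms of $C$ permute edges of $\Gamma$ and hence act on the toric data, so the Reid–Tai computation must be done on the quotient fan, and isolating exactly where $g=3$ breaks down (so as to be sure $g\ge 4$ suffices) requires identifying the finitely many ``bad'' graphs and checking them by hand.
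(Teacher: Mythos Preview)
Your overall architecture---local reduction, toric model, quotient by the finite stabilizer---matches the paper's, but there are two genuine gaps.

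First, you have misidentified where the hypothesis $g\ge 4$ enters. You expect the main obstacle to be Step~(3)--(4), a discrepancy bound for the toric model $X_\Gamma=\Spec U(\Gamma)$ that should depend on the genus and ``genuinely fail'' for $g\le 3$. In fact the paper proves (Theorem~\ref{teoUG}) that $X_\Gamma$ is Gorenstein with \emph{terminal} singularities for \emph{every} connected graph $\Gamma$, with no constraint on $b_1(\Gamma)$ whatsoever: the element $m_\Gamma=\sum_{\er\in\OE}\er$ gives the Gorenstein functional, and a short lattice argument shows the only lattice points on the level-$1$ slice are the primitive ray generators. So Step~(4) is both easier and more uniform than you anticipate; the genus plays no role there. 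The hypothesis $g\ge 4$ is needed \emph{only} in the automorphism analysis, via the Harris--Mumford/Ludwig classification of $\phi\in\Aut(C)$ for which $\Spec R_C/\langle\phi\rangle$ fails to be canonical (elliptic tail automorphisms).

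Second, your Step~(5) is where the real difficulty lies, and your plan for it is incomplete. The classical Reid--Tai criterion applies to quotients of \emph{smooth} varieties, but $X_\Gamma$ is typically not even $\bbQ$-factorial (the cone $\sigma_\Gamma^\vee$ is non-simplicial whenever $\Gamma$ has more than one vertex after removing separating edges and loops). There is no obvious ``age'' to compute. The paper's key maneuver is to exploit the forgetful morphism $\pi\colon\bar J_{d,g}\to\overline M_g$: locally this is a toric morphism $X_\Gamma\to\bbA^{|E(\Gamma)|}$ sending each primitive ray generator of $\sigma_\Gamma^\vee$ to a primitive ray generator downstairs. A tailored lemma (Lemma~\ref{L:map-smooth} in the appendix) then shows that if $\bbZ_r=\langle\phi\rangle$ acts compatibly and the quotient of the \emph{smooth} target is canonical, so is the quotient of the source. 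Thus for generic $\phi$ one piggybacks on the known canonicity of $\overline M_g$ rather than computing anything on $X_\Gamma$ directly; only for the finitely many elliptic-tail $\phi$ (where Harris--Mumford fails downstairs) does one compute by hand, and there the relevant factor $U(\Gamma_E)$ happens to be smooth, so classical Reid--Tai suffices after all. You should reorganize Step~(5) around this comparison map.
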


The arguments   build on the previous work of the authors in two ways.  First, extending the deformation theory in \cite{CMKVb}, we are able to reduce the problem to the study of a special class of combinatorial rings, called cographic toric face rings,  investigated  in \cite{CMKVa}.  In full generality, these rings can exhibit  poor behavior (see \cite[\S 5.1]{CMKVa}).  However, as it turns out, the rings appearing from the deformation theory of the \emph{universal} compactified Jacobian form a special class of rings with mild singularities.
 The specific cographic rings appearing in this paper will be denoted by $U(\Gamma)$  and are defined from the data of a  graph $\Gamma$ (Definition \ref{D:cogringvar}).  Our  main result for these rings is the following theorem.

\begin{teoalpha} \label{teoUG}
Let $\Gamma$ be a finite, connected graph and let $k$ be an algebraically closed field.  The cographic toric ring $U(\Gamma)$ is a finitely generated, integral $k$-algebra and the singularities of the associated variety $\operatorname{Spec}U(\Gamma)$ are Gorenstein, rational, and terminal.
\end{teoalpha}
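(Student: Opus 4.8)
The plan is to recognize $\Spec U(\Gamma)$ as an explicit \emph{normal affine toric variety} and then obtain each of the four assertions from a standard criterion in toric geometry applied to a combinatorially transparent cone. The first task is therefore to unwind Definition \ref{D:cogringvar}: one presents $U(\Gamma)$ via generators and binomial relations and identifies this ring with the semigroup algebra $k[\sigma_\Gamma^\vee \cap M_\Gamma]$ of a strongly convex rational polyhedral cone $\sigma_\Gamma$ attached to the cographic (i.e.\ cut) data of $\Gamma$, with $M_\Gamma$ the dual lattice and the primitive ray generators of $\sigma_\Gamma$ corresponding to the edges of $\Gamma$. The content here is to check that the presenting ideal is prime and that the associated monoid is saturated, so that $\Spec U(\Gamma) = U_{\sigma_\Gamma}$ is a normal affine toric variety. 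Granting this, finite generation is Gordan's lemma and integrality is immediate, since the coordinate ring is the semigroup algebra of a submonoid of a lattice. A convenient preliminary reduction is to observe that bridges and loops of $\Gamma$ alter $U(\Gamma)$ only by a polynomial, respectively Laurent-polynomial, factor, hence affect none of the four properties, so one may assume $\Gamma$ is $2$-edge-connected and loopless.

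With $\Spec U(\Gamma)=U_{\sigma_\Gamma}$ normal and toric, rationality of the singularities is automatic: normal affine toric varieties over any field are Cohen--Macaulay, and in characteristic zero they have rational singularities (in positive characteristic one invokes the corresponding statement for $F$-rationality of normal semigroup rings, which is all that is used downstream). For the Gorenstein property I would apply the criterion that $U_{\sigma_\Gamma}$ is Gorenstein exactly when there exists a lattice element $m_\Gamma \in M_\Gamma$ with $\langle m_\Gamma, u_\rho \rangle = 1$ for every primitive ray generator $u_\rho$ of $\sigma_\Gamma$. Since the rays of $\sigma_\Gamma$ are indexed by the edges of $\Gamma$, one takes $m_\Gamma$ to be the functional on the cographic lattice induced by the degree (equivalently, the ``level'' functional that cuts out the relevant polytope cross-section); the verification then amounts to checking that this functional is integral on $M_\Gamma$ and evaluates to $1$ on each edge class, a direct computation with the cut lattice of $\Gamma$.

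The crux is terminality. Because $U_{\sigma_\Gamma}$ is Gorenstein, I would use the lattice-point criterion: a Gorenstein affine toric variety $U_\sigma$ has terminal singularities if and only if the only lattice points of $\sigma$ in the slab $\{\, u : \langle m_\Gamma, u\rangle \le 1\,\}$ are the origin and the primitive ray generators. Thus terminality of $\Spec U(\Gamma)$ reduces to the purely combinatorial assertion that every $u \in \sigma_\Gamma \cap N_\Gamma$ with $\langle m_\Gamma, u\rangle \le 1$ is either $0$ or a primitive edge class. I expect to prove this by describing $\sigma_\Gamma\cap N_\Gamma$ graph-theoretically, as the nonnegative integral combinations of edge classes subject to the cycle relations of $\Gamma$, and then showing that the slab condition forces such a combination to be minimal, namely supported on the edges of a single bond of $\Gamma$ with $\{0,1\}$ coefficients, which modulo the relations is one edge class. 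A clean way to organize the argument is an induction on $|E(\Gamma)|$ by deletion--contraction, using that deleting or contracting an edge leaves the cographic cone unchanged or passes to that of a smaller graph, together with $2$-edge-connectedness to control the bonds and cocircuits.

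The main obstacle is exactly this last combinatorial step, and it is the point at which the ``mildness'' emphasized in the introduction is genuinely exploited: for a general cographic ring the analogous slab can contain extra lattice points (the poor behavior noted in \cite[\S 5.1]{CMKVa}), whereas for $U(\Gamma)$ one must rule this out for every $2$-edge-connected $\Gamma$. The delicate case is when $\Gamma$ has many overlapping circuits, where the induction branches must be matched carefully and one has to use that the cut lattice is saturated and that the level functional $m_\Gamma$ is strictly positive on every edge. A secondary, more routine obstacle is the precise identification of $\sigma_\Gamma$ and its rays from the generators-and-relations description in Definition \ref{D:cogringvar}, together with the saturation check underlying normality; this is where the deformation-theoretic bookkeeping of \cite{CMKVa} is brought to bear.
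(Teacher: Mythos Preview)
Your high-level plan matches the paper's: realize $\Spec U(\Gamma)$ as a normal affine toric variety, cite rationality from general toric theory, verify the Gorenstein criterion by exhibiting an explicit $m_\Gamma$, and then check terminality via the lattice-point criterion for Gorenstein toric varieties. That skeleton is exactly Theorem~\ref{T:sing}.

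However, several of your concrete descriptions of the cone are off, and this matters for the terminal step. The rays of $\sigma_\Gamma^\vee\subset\bbN_\Gamma\otimes\bbR$ are indexed by the \emph{oriented} edges $\er\in\OE(\Gamma)$, not by the unoriented edges (Lemma~\ref{L:prop-cone}): there are $2|E(\Gamma)|$ of them (with the two orientations of a separating edge giving the same ray). This doubling is precisely what makes $U(\Gamma)$ so mild; equivalently, $U(\Gamma)\cong R(\Gamma^d,\phi^d)$ for the doubled graph. Correspondingly the Gorenstein element is $m_\Gamma=\sum_{\er\in\OE}\er\in\bbH_1(\Gamma,\bbZ)$, not a degree functional on a cut lattice. (Also, a loop contributes a polynomial factor $k[X,Y]\cong U(L)$, not a Laurent-polynomial one; see Example~\ref{E:loop1} and Lemma~\ref{L:sep-edge}.) Your ``bond/cut lattice'' picture is not the lattice in play here, and an argument based on it will not line up with the actual $\bbH_1(\Gamma,\bbZ)$ setup.

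For terminality the paper does \emph{not} use deletion--contraction induction. It argues directly: given $x\in\sigma_\Gamma^\vee\cap\bbN_\Gamma$ with $\langle m_\Gamma,x\rangle=1$, write $x=\sum_{\er}a_{\er}u_{\er}$ with $a_{\er}\ge 0$ and $\sum a_{\er}=1$, then pair $x$ with $\gamma_e:=\er+\el\in\bbH_1(\Gamma,\bbZ)$ to get $a_{\er}+a_{\el}\in\{0,1\}$ for every $e$. This forces $x=a_{\er}u_{\er}+a_{\el}u_{\el}$ for a single edge $e$; since (after Lemma~\ref{L:sep-edge}) $e$ is non-separating, a cycle $\gamma$ containing $\er$ but not $\el$ gives $a_{\er}=\langle\gamma,x\rangle\in\{0,1\}$, so $x=u_{\er}$ or $u_{\el}$. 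This pairing trick with the ``doubled'' elements $\gamma_e$ is short and avoids any induction; your proposed deletion--contraction route may be salvageable once the cone is described correctly, but it is both vaguer and more work than necessary.
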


Using the results in \cite{CMKVa}, together with standard results on toric varieties, we are also able to establish a number of further properties of the rings $U(\Gamma)$ (and consequently $\bar J_{d,g}$) in terms of invariants of the graph $\Gamma$, including the dimension (Corollary \ref{CorDim}), the dimension of the tangent space (Proposition \ref{protan}), and the multiplicity (Theorem \ref{T:multiGKZ}).

From Theorem \ref{teoCS} and the work of Bini--Fontanari--Viviani, one obtains the following consequence for the birational geometry of $\bar J_{d,g}$.

\begin{coralpha}\label{teoKD}
Assume that ${\rm char}(k)=0$.
The Kodaira dimension of the universal Jacobian $\bar J_{d,g}$ is given by
$$\kappa(\bar J_{d,g})=
\begin{cases}
-\infty & \text{ if } g\leq 9, \\
0 & \text{ if } g=10, \\
19 & \text{ if } g= 11, \\
3g-3 & \text{ if } g\geq 12.
\end{cases}
$$
Moreover, for $g\ge 10$, the Iitaka fibration of $ J_{d,g}$ is given as follows:
\begin{enumerate}
\item For $g\ge 12$, the Iitaka fibration is the forgetful  morphism $\pi:\bar J_{d,g}\to \overline M_g$.

\item For $g=11$, the Iitaka fibration is the rational map $\bar J_{d,11}\dashrightarrow \mathcal F_{11}$, where $\mathcal F_{g}$ is the moduli of K3 surfaces with polarization of degree $2g-2$, and the rational map takes a  general pair $(C,L)$ to the pair $(S,\mathcal O_S(C))$, where $S$ is the unique K3 containing $C$ (see \cite{mukai96}).
  
\item For $g=10$, the Iitaka fibration is the structure  morphism $\bar J_{d,10}\to \operatorname{Spec} k$.  
\end{enumerate}

\end{coralpha}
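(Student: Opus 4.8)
The plan is to derive Corollary \ref{teoKD} from Theorem \ref{teoCS} by invoking the results of Bini--Fontanari--Viviani \cite{BFV}, which establish the Kodaira dimension and the structure of the Iitaka fibration \emph{conditionally} on the singularities of $\bar J_{d,g}$ being canonical (so that $\bar J_{d,g}$ admits a resolution $\rho\colon \widetilde J_{d,g}\to \bar J_{d,g}$ with $\rho_* \omega_{\widetilde J_{d,g}}^{\otimes m} = \omega_{\bar J_{d,g}}^{[m]}$ for all $m\ge 0$, hence the plurigenera and canonical ring are computed on $\bar J_{d,g}$ itself). First I would recall that for $g\ge 4$ Theorem \ref{teoCS} supplies exactly this canonicity hypothesis, so the conditional statements in \cite{BFV} become unconditional in that range. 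The cases $g\le 3$ must then be handled separately: here $\overline M_g$ is rational (indeed unirational, in fact rational for $g\le 3$), and $\bar J_{d,g}$ is birational to a projective bundle — or at worst a fibration with rationally connected fibers, since the fibers are compactified Jacobians of genus $\le 3$ curves, which are rational — over $\overline M_g$, whence $\bar J_{d,g}$ is rationally connected and $\kappa = -\infty$; this is consistent with the stated value for $g\le 9$.

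Next I would assemble the three nontrivial ranges. For $g\ge 12$, the key input is that the canonical class $K_{\bar J_{d,g}}$ is the pullback $\pi^* K_{\overline M_g}$ up to contributions supported on the boundary and the non-reduced locus, combined with the fact that $K_{\overline M_g}$ is big for $g\ge 24$ and, more to the point, that the relevant pluricanonical sections all descend from $\overline M_g$; the computation in \cite{BFV} then identifies the Iitaka fibration with $\pi$ and gives $\kappa(\bar J_{d,g}) = \kappa(\overline M_g) = 3g-3$, the dimension of $\overline M_g$, since $\overline M_g$ is of general type for $g\ge 22$ and (after the relevant low-genus analysis in \cite{BFV}) $\kappa(\overline M_g)=3g-3$ for all $g\ge 12$. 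For $g=11$, one uses Mukai's description \cite{mukai96}: the general curve of genus $11$ lies on a unique K3 surface, giving the rational map $\bar J_{d,11}\dashrightarrow \mathcal F_{11}$; the fibers of this map are (generically) rationally connected, and $\kappa(\mathcal F_{11})$ together with the fiber dimension count gives $\kappa(\bar J_{d,11}) = 19$. For $g=10$, the relevant moduli space has a dominant map to a point through the known vanishing/non-vanishing of plurigenera established in \cite{BFV}, giving $\kappa = 0$.

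The main obstacle, and the place where Theorem \ref{teoCS} is genuinely used, is ensuring that the pluricanonical forms on a resolution of $\bar J_{d,g}$ are in bijection with those on $\bar J_{d,g}$, i.e.\ that passing to a resolution does not introduce new sections — equivalently that $\bar J_{d,g}$ has canonical (in fact, by Theorem \ref{teoUG}, even better) singularities so that $H^0(\widetilde J_{d,g}, \omega^{\otimes m}) = H^0(\bar J_{d,g}, \omega_{\bar J_{d,g}}^{[m]})$. Without this, the computations of \cite{BFV}, which are carried out on $\bar J_{d,g}$ or on an explicit partial resolution, would only bound $\kappa$ from one side. A secondary technical point is bookkeeping the discrepancy between $\kappa(\overline M_g)$ and $\dim \overline M_g = 3g-3$ in the range $12\le g\le 21$, where general type of $\overline M_g$ is not classical; here one relies on the specific effective divisor class computations in \cite{BFV} (or equivalently on the fact that $\overline M_g$ is of general type for $g\ge 12$ except possibly a few small values, which are then checked directly) to conclude $\kappa(\bar J_{d,g}) = 3g-3$. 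With these points in place, the statement follows by directly quoting \cite{BFV} with its hypothesis now verified.
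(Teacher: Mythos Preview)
Your proposal has a genuine gap in the range $12\le g\le 21$, and it stems from a misidentification of the canonical class. You write that $K_{\bar J_{d,g}}$ is ``the pullback $\pi^* K_{\overline M_g}$ up to contributions supported on the boundary'' and then conclude $\kappa(\bar J_{d,g})=\kappa(\overline M_g)=3g-3$ for $g\ge 12$. But $\kappa(\overline M_g)=-\infty$ for $12\le g\le 16$ and is unknown for $17\le g\le 21$; the paper even tabulates this and emphasizes that $\kappa(\bar J_{d,g})\ne\kappa(\overline M_g)$ in this range. So the argument as written simply fails there.

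The correct input, from \cite{BFV}, is the Grothendieck--Riemann--Roch computation
\[
K_{\bar J_{d,g}}=\pi^*(14\lambda-2\delta)=\pi^*K_{\overline M_g}+\pi^*\lambda,
\]
and the extra $\pi^*\lambda$ is exactly what makes the difference. Since $\pi$ has connected fibers, the Iitaka dimension of $K_{\bar J_{d,g}}$ equals the Iitaka dimension of the divisor $14\lambda-2\delta$ on $\overline M_g$. One then computes the latter via the slope of $\overline M_g$: for $g\ge 12$ the slope is strictly below $7$ (Eisenbud--Harris--Mumford for $g\ge 13$, Cotterill for $g=12$), so $14\lambda-2\delta$ is big and $\kappa=3g-3$; for $g=10,11$ the slope equals $7$ (Tan, Farkas--Popa) and the Iitaka dimension of $14\lambda-2\delta$ is worked out directly in \cite{BFV} to be $0$ and $19$ respectively. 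Theorem~\ref{teoCS} then enters exactly as you say, to identify $\kappa(\bar J_{d,g})$ with $\kappa(K_{\bar J_{d,g}})$.

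Two smaller points. First, your treatment of $g\le 9$ only argues $g\le 3$ and then asserts consistency; the paper instead invokes Verra's theorem that $J_{d,g}$ is unirational for all $g\le 9$. Second, for $g=11$ the mechanism is not ``$\kappa(\mathcal F_{11})$ together with the fiber dimension count'': rather, \cite{BFV} identifies the Iitaka map of $14\lambda-2\delta$ with the Mukai map to $\mathcal F_{11}$, and canonical singularities (Theorem~\ref{teoCS}) upgrade this to the Iitaka fibration of $J_{d,11}$.
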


For $g=22$ and $g\ge 24$ the statement on the Kodaira dimension follows from general results in birational geometry, together with well-known results for $\overline M_g$  (see Remark \ref{remUK}).
In the remaining range, the result  was proven by Bini--Fontanari--Viviani \cite[Thm.~1.2]{BFV} under the  numerical condition that
$\gcd(d+1-g,2g-2)=1$ or $g=23$, and by Farkas--Verra \cite{farkasverra}  in the special case $d=g$.
In particular, the case $d=0$ was not known.  We also point out that while we have obtained here a complete classification of the Kodaira dimension  for the universal Jacobian,  the Kodaira dimension of  the moduli of curves is still unknown in the range $17\le  g\le 21$, $g=23$.    Finally, for $10\le g\le  16$, we have $\kappa(\bar J_{d,g})\ne \kappa(\overline{M}_g)$.  We direct the reader to \eqref{eqnKDC} for more details, as well as Remark \ref{remFVthch}, which compares these numerics with the recent work of Farkas--Verra  \cite{Far-theta, FVNik, farkasverraOddThCh, farkasThCh}  on the moduli space of theta characteristics.

  Another immediate observation is that the Kodaira dimension is independent of $d$.   One might guess the reason for this is that $\bar J_{d,g}$ is birational  $\bar J_{d',g}$ for different $d$ and $d'$.   Our next result shows this is not generally the case.

\begin{coralpha}\label{teoBA}
Assume that ${\rm char}(k)=0$ and that $g\geq 12$. If $\eta: J_{d,g}\dashrightarrow J_{d',g}$ is a birational map, then
$d'=\pm d+n(2g-2)$ and $\eta$ is given by the map sending $(C,L)\in J_{d,g}$ into
$(C,L^{\pm 1}\otimes \omega_C^n)\in J_{d',g}$. In particular:
\begin{enumerate}[(i)]
\item $J_{d,g}$ is birational to $J_{d',g}$ if and only if $d'\equiv \pm d \mod 2g-2$.
\item The group $\Bir(J_{d,g})$ of birational automorphisms of $J_{d,g}$ is given by
$$\Bir(J_{d,g})=
\begin{cases}
\bbZ/ 2\bbZ & \text{ if } d=n(g-1)  \text{ for some } n\in \bbZ,\\
\{\Id \} & \text{ otherwise.}
\end{cases}
$$
Moreover, if $d=n(g-1)$ for some $n\in \bbZ$ then the generator of $\Bir(J_{d,g})$ is the birational automorphism sending $(C,L)$ into $(C, L^{-1}\otimes \omega_C^n)$.
\end{enumerate}
\end{coralpha}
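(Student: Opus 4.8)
The plan is to reduce the statement on birational maps to a statement about the canonical models. By Theorem~\ref{teoCS}, for $g \geq 4$ the variety $\bar J_{d,g}$ has canonical singularities, so the pluricanonical forms extend; combined with Corollary~\ref{teoKD}, for $g \geq 12$ the Iitaka fibration of $\bar J_{d,g}$ is the forgetful morphism $\pi\colon \bar J_{d,g} \to \overline M_g$, and the general fiber of $\pi$ is the Jacobian $J^d C$ of a general curve $C$ of genus $g$. A birational map $\eta\colon J_{d,g} \dashrightarrow J_{d',g}$ is automatically compatible with the Iitaka fibrations (these are intrinsic to the birational equivalence class), so $\eta$ descends to a birational \emph{automorphism} of $\overline M_g$. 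The first step is to invoke the theorem of Bruno--Verra (or the Matsusaka--Mumford / Royden-type rigidity results, depending on what the authors cite) that $\Bir(\overline M_g) = \{\Id\}$ for $g \geq 12$: thus $\eta$ commutes with $\pi$ over an open subset $U \subseteq \overline M_g$, sending the fiber $J^d C$ isomorphically (birationally, hence biregularly since these are abelian varieties and the map extends) to $J^{d'} C$ for each $C \in U$.

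The second step analyzes the fiberwise isomorphism. For a general curve $C$, an isomorphism of varieties $J^d C \xrightarrow{\sim} J^{d'} C$ is a translate of an isomorphism of the underlying principally polarized abelian varieties $(JC, \Theta)$; by the Torelli theorem and the fact that a general curve has no nontrivial automorphisms, $\Aut(JC,\Theta) = \{\pm 1\}$. Hence the map on fibers is $L \mapsto L^{\pm 1} \otimes M_C$ for some line bundle $M_C$ of the appropriate degree depending algebraically on $C$; degree considerations force $\deg M_C = d' \mp d$. The third step is to identify $M_C$. The family $\{M_C\}_{C \in U}$ gives a rational section of the relative Picard scheme over $\overline M_g$, i.e.\ a line bundle on the universal curve over an open subset, up to the subtlety that $\overline M_g$ is only a coarse space / the universal curve exists only as a stack. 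One shows that the relative Picard group of the universal curve over the generic point of $\overline M_g$ is generated by the relative dualizing sheaf $\omega$ (this is essentially the statement that $\Pic$ of the generic curve, as a family, is $\bbZ\cdot\omega$ — a consequence of the irreducibility of $\overline M_g$ and the computation of $\Pic(\overline M_g)$, or of Franchetta's conjecture, proved by Arbarello--Cornalba, Schröer, Mestrano). Therefore $M_C = \omega_C^{\otimes n}$ for some $n \in \bbZ$, giving $d' = \pm d + n(2g-2)$ and the stated formula for $\eta$.

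The final step is bookkeeping: part~(i) is immediate since $\omega_C^{\otimes n}$ has degree $n(2g-2)$, and $J_{d,g}$ and $J_{d',g}$ are genuinely distinct (not just abstractly isomorphic) precisely when no such $n$ exists, i.e.\ when $d' \not\equiv \pm d \bmod 2g-2$. For part~(ii) one specializes to $d' = d$: the automorphism is $L \mapsto L \otimes \omega_C^n$ (which forces $d = d + n(2g-2)$, so $n = 0$ and we get $\Id$) or $L \mapsto L^{-1} \otimes \omega_C^n$, which requires $d = -d + n(2g-2)$, i.e.\ $2d = n(2g-2)$, i.e.\ $d = n(g-1)$; in that case this nontrivial involution generates $\Bir(J_{d,g}) = \bbZ/2\bbZ$, and otherwise $\Bir(J_{d,g}) = \{\Id\}$.

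I expect the main obstacle to be the Franchetta-type step identifying $M_C$ with a power of the dualizing sheaf: one must be careful that the family $\{M_C\}$ need only be defined over a \emph{dense open} subset of $\overline M_g$ (not all of it), that the universal curve is a stack rather than a scheme so "line bundle on the universal curve" must be interpreted appropriately, and that the coarse moduli subtleties near curves with automorphisms do not interfere — passing to $M_g^\circ$, where the universal curve genuinely exists, and then using that $M_g \setminus M_g^\circ$ has codimension $\geq 2$ for $g \geq 4$, should handle this cleanly. A secondary point requiring care is promoting the fiberwise \emph{birational} map to a fiberwise \emph{isomorphism} of abelian varieties, which uses that a rational map between abelian varieties is a morphism (after translation) together with properness to extend over the generic fiber.
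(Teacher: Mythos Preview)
Your overall strategy matches the paper's: use that $\pi\colon \bar J_{d,g}\to \overline M_g$ is the Iitaka fibration for $g\ge 12$, deduce that a birational $\eta$ descends to a birational self-map $\xi$ of $\overline M_g$, show $\xi=\Id$, then analyze the fiberwise isomorphism using $\Aut(JC,\Theta)=\{\pm 1\}$ and Franchetta's conjecture (Mestrano). The last two steps are essentially identical to the paper's.

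There is, however, a genuine gap in your step showing $\xi=\Id$. You invoke ``$\Bir(\overline M_g)=\{\Id\}$ for $g\ge 12$'', hedging on the attribution. This is not a result available in the literature, and in the range $12\le g\le 16$ it is not even plausible without a separate argument: there $\kappa(\overline M_g)=-\infty$ (see the table \eqref{eqnKDC} in the paper), so $\overline M_g$ is uniruled, and uniruled varieties typically have large birational automorphism groups. The paper does \emph{not} appeal to any such rigidity of $\overline M_g$. Instead it argues directly: for very general $C\in M_g$ with $C':=\xi(C)$, the induced fiberwise birational map $J^dC\dashrightarrow J^{d'}C'$ between abelian varieties extends to an isomorphism; since $\mathrm{NS}(JC)\cong\bbZ$ for very general $C$, the principal polarization on $JC$ is determined up to scalar, so the isomorphism respects the principal polarizations; Torelli then gives $C\cong C'$, and hence $\xi=\Id$ (see \cite[Lem.~7.4]{BFV}). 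This step is precisely where the restriction to very general curves (rather than merely general automorphism-free ones) enters the argument, and it replaces your unjustified black-box.

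Once $\xi=\Id$ is established this way, the remainder of your proof---the form $L\mapsto (L\otimes L_C)^{\pm 1}$ and the identification $L_C=\omega_C^{\otimes n}$ via Franchetta over $M_g^\circ$---coincides with the paper's (which cites \cite[Prop.~3.2.2]{caporasoNotes} and \cite{mestrano87}); your anticipated care about working over $M_g^\circ$ and extending the rational section is exactly what is needed.
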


This was proven by Bini--Fontanari--Viviani \cite[Thm.~1.7]{BFV} in the special case
$\gcd(d+1-g,2g-2)=1$ (or $g\ge 22$), and builds on work of Caporaso \cite{caporaso}.

The paper is organized as follows.  In Section 1 we review terminology concerning graphs, and various constructions with graphs that will appear later.  In Section 2 we define the combinatorial rings $U(\Gamma)$ and establish some first properties of the rings.  In Section 3 we establish some specific presentations of the ring, which are useful for later computations, and also for connecting the rings with deformations.   In Section 4 we discuss the singularities of the rings $U(\Gamma)$.   In Section 5 we describe the rings as invariants for a group action, which provides the framework for the connection with deformations of sheaves.   In Section 6 we provide some examples of these rings.    In Section 7 we make the connection with the universal compactified Jacobian, and establish the results on the singularities, Kodaira dimension, and birational automorphism group. 

The paper ends with an appendix in which we investigate the singularities of finite quotients of toric varieties. More specifically, the  focus is on establishing a  
Reid--Tai--Shepherd-Barron criterion for singular toric varieties; i.e., a numerical condition that can be used to determine when a finite quotient of a singular toric variety has canonical, or terminal singularities.       The main result is Proposition \ref{P:crit-cyc}, which in conjunction with Theorem \ref{T:redcyc}, is a direct generalization of the Reid--Tai--Shepherd-Barron criterion.   While we expect the  generalization is well-known to the experts, we were not aware of a reference, and include proofs here.

\subsection*{Acknowledgements}
The first author would like to thank Jonathan Wise for conversations on toric geometry and deformation theory, and James McKernan for a discussion on singularities of toric varieties.

\section{Preliminaries on graphs} \label{S:Prelim}

In this section we introduce some constructions on graphs that we will  use in this paper.

\subsection{Graph notation}

Following Serre  \cite[\S~2.1]{SerT}, a \emph{graph} $\Gamma$ consists of the data
$(\OE  \xymatrix{ \ar @{->}^s @< 2pt> [r] \ar@{->}_t @<-2pt> [r] &  } V, \OE \stackrel{\iota}{\to} \OE),$
where $V$ and $\OE$ are sets, $\iota$ is a fixed-point free involution, and $s$ and $t$ are maps satisfying $s(\er)=t(\iota(\er))$ for all $\er \in \OE$.  The maps $s$ and $t$ are called the \emph{source} and \emph{target} maps respectively.  We call $V=:V(\Gamma)$ the set of \emph{vertices}.  We call $\OE=:\OE(\Gamma)$ the set of \emph{oriented edges}.

 We define the set of \emph{(unoriented) edges} to be  $E(\Gamma)=E:=\OE/\iota$.   Given an oriented edge $\er \in \OE $ we will denote by $\underline \er$ the class of $\er$ in  $E$.   An \emph{orientation of an edge} $e\in E$ is a representative for $e$ in $\OE$; we use the notation $\er$ and $\el$ for the two possible orientations of $e$.
  An \emph{orientation of a graph $\Gamma$} is a section $\phi:E\to \OE$ of the quotient map.  An \emph{oriented graph}  consists of a pair $(\Gamma,\phi)$ where $\Gamma$ is a graph and $\phi$ is an orientation.    Given an oriented graph, we say that $\phi(e)$ is the \emph{positive orientation} of the edge $e\in E$. Given a subset $S\subseteq E$, we define $\OS\subseteq \OE$ to be the set of all orientations of the edges in $S$.

We will say that  two edges of a graph are \emph{parallel} if they connect the same (not necessarily distinct) vertices.
We say that an edge of a connected graph is a \emph{separating} edge if removing the edge disconnects the graph.
Two edges of a connected graph are a \emph{separating pair}  if they are both non-separating edges and if removing the two edges disconnects the graph.

If $\Gamma$ is connected, then we say that an orientation $\phi$ of $\Gamma$ is \emph{totally
	cyclic} if there does not exist a proper non-empty subset $W\subset V(\Gamma)$ such that the edges between $W$ and
	its complement  $V(\Gamma)\smallsetminus W$ all go in the same direction (i.e.~either all these edges are oriented from $W$ to $V(\Gamma)\smallsetminus W$ or all are oriented in the opposite direction).
	If $\Gamma$ is disconnected, then we say that an orientation of $\Gamma$ is totally cyclic if the orientation induced on each connected component of $\Gamma$ is totally cyclic.
	
	A graph $\Gamma$ is called  \emph{cyclic} if it is connected, free from separating edges,
and satisfies $b_1(\Gamma):=|E(\Gamma)|-|V(\Gamma)|+1=1$.   We will also call a cyclic graph a \emph{circuit}.  A cyclic graph together with a totally cyclic orientation is called an \emph{oriented circuit}.
A \emph{loop} is a circuit with a single edge.

\subsection{Ordinary homology and oriented homology}\label{S:HomGra}

Given any graph $\Gamma$, we can form its ordinary homology (which coincides with the homology of the underlying topological space) and its oriented homology.

Let $\mathbb C_0(\Gamma,\mathbb Z)$ be the free $\mathbb Z$-module with basis $V(\Gamma)$, let $\mathbb C_1(\Gamma,\mathbb Z)$ be the free $\mathbb Z$-module generated by $\OE(\Gamma)$ and consider the  boundary map $ \mathbb D $  defined as:
\begin{equation}\label{eqnCCcompS}
	\mathbb D: {\mathbb C}_1(\Gamma,\mathbb Z) \to {\mathbb C}_0(\Gamma,\mathbb Z)
	\end{equation}
	\begin{equation*}
	\er  \mapsto t(\er)-s(\er).
\end{equation*}
 We will denote by $\mathbb H_\bullet (\Gamma,\mathbb Z)$ the groups obtained from the homology of
 ${\mathbb C}_\bullet (\Gamma,\mathbb Z)$ and we will call them the \emph{oriented homology groups of $\Gamma$}.
 Let $(\ , \ )$ be the unique scalar product on $\mathbb C_1(\Gamma,\mathbb R)=\mathbb C_1(\Gamma,\mathbb Z)\otimes_{\mathbb Z}\mathbb R$ (and also its restriction to $\bbH_1(\Gamma,\bbZ)$ )
 such that the elements of $\OE(\Gamma)$ form an orthonormal basis.

Let $C_0(\Gamma,\bbZ)=\bbC_0(\Gamma,\bbZ)$,  let $C_1(\Gamma,\mathbb Z)$ be the quotient of $\bbC_1(\Gamma,\bbZ)$ by the relation $\el=-\er$ for every $e\in E(\Gamma)$
and consider the boundary map
\begin{equation}\label{eqnCCcomp}
	 \partial: C_1(\Gamma,\mathbb Z) \to C_0(\Gamma,\mathbb Z)
	\end{equation}
	\begin{equation*}
	[\er]  \mapsto t(\er)-s(\er),
\end{equation*}
where we denote by $[\er]$ the class of $\er$ in $C_1(\Gamma,\bbZ)$.
 We will denote by  $H_\bullet (\Gamma,\mathbb Z)$ the groups obtained from the homology of  $C_\bullet (\Gamma,\mathbb Z)$ and we will call them the \emph{ordinary homology groups of $\Gamma$}.
Note that $H_\bullet (\Gamma,\mathbb Z)$ is isomorphic to the homology of the underlying topological space of $\Gamma$.
Let $(\ ,\ )$ be the unique scalar product on $C_1(\Gamma,\bbZ)$ (and also its restriction to $H_1(\Gamma,\bbZ)$) such that
$$
\begin{aligned}
& ([\er],[\er])=-([\er],[\el])=1  \hspace{0.3cm} \text{Êfor any }Êe\in E, \\
&  ([\er_1],[\er_2])=0 \hspace{0.3cm} \text{Êfor any } \er_1,\er_2\in \OE \text{ such that } [\er_1]\neq \pm [\er_2].
\end{aligned}
$$

For a connected graph $\Gamma$,  the corank of the image of $\bbD$ (resp.~of $\partial$) is one.
Consequently, for a connected graph, we have
\begin{equation}\label{E:dim-H}
\begin{aligned}
& \operatorname{rank} H_1(\Gamma,\bbZ)=|E(\Gamma)|-|V(\Gamma)|+1=:b_1(\Gamma), \\
& \operatorname{rank} \bbH_1(\Gamma,\bbZ)=2|E(\Gamma)|-|V(\Gamma)|+1=b_1(\Gamma)+|E(\Gamma)|. \\
\end{aligned}
\end{equation}

In order to determine the relationship between ordinary and oriented homology, consider the following commutative diagram
 \begin{equation}\label{E:diag-hom}
\xymatrix{
      \bbC_1(\Gamma,\bbZ)  \ar@{->>}[d]^{}\ar[rr]^{\mathbb D} && \mathbb C_0(\Gamma,\bbZ)\ar@{=}[d]\\
     C_1(\Gamma,\bbZ)  \ar[rr]^{\partial} && C_0(\Gamma,\bbZ)\\
  }
\end{equation}
where the left vertical map send $\er$ into $[\er]$. The above diagram \eqref{E:diag-hom} induces an equality $\mathbb H_0(\Gamma,\mathbb Z)=H_0(\Gamma,\mathbb Z)$ and 
a surjection $\mathbb H_1(\Gamma,\mathbb Z)\twoheadrightarrow H_1(\Gamma,\mathbb Z)$, whose kernel can be described as follows.

\begin{lem}\label{L:kerH1}
The kernel of the natural surjection $\mathbb H_1(\Gamma,\mathbb Z)\twoheadrightarrow H_1(\Gamma,\mathbb Z)$ is generated by $\{\er +\el\}_{e\in E(\Gamma)}$.
\end{lem}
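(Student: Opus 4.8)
The plan is to analyze the commutative diagram \eqref{E:diag-hom} directly, using the fact that the vertical map $q\colon \bbC_1(\Gamma,\bbZ)\to C_1(\Gamma,\bbZ)$ sending $\er\mapsto[\er]$ is surjective with kernel the subgroup $K$ generated by $\{\er+\el\}_{e\in E(\Gamma)}$. Indeed, by definition $C_1(\Gamma,\bbZ)=\bbC_1(\Gamma,\bbZ)/\langle \el+\er\rangle$, so the snake-lemma (or a direct diagram chase) applied to \eqref{E:diag-hom} identifies the kernel of $\bbH_1(\Gamma,\bbZ)\twoheadrightarrow H_1(\Gamma,\bbZ)$ with $\ker(\bbD)\cap q^{-1}(\ker\partial)\big/(\ker\bbD\cap K)$, but since $q$ maps $\ker\bbD=\bbH_1$ onto $\ker\partial=H_1$ (as asserted in the excerpt), the kernel is simply $\bbH_1(\Gamma,\bbZ)\cap K$. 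So the statement to prove is the equality $\bbH_1(\Gamma,\bbZ)\cap K = K$, i.e.\ that $K\subseteq\bbH_1(\Gamma,\bbZ)=\ker\bbD$.

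First I would observe that each generator $\er+\el$ of $K$ does lie in $\ker\bbD$: applying $\bbD$ gives $(t(\er)-s(\er))+(t(\el)-s(\el))$, and since $\el=\iota(\er)$ we have $s(\el)=t(\er)$ and $t(\el)=s(\er)$, so the sum is $0$. Hence $K\subseteq \ker\bbD=\bbH_1(\Gamma,\bbZ)$, which gives the inclusion $K\subseteq\bbH_1(\Gamma,\bbZ)\cap K$; the reverse inclusion is trivial. Combined with the diagram-chase identification of the kernel with $\bbH_1(\Gamma,\bbZ)\cap K$, this yields that the kernel equals $K$, i.e.\ is generated by $\{\er+\el\}_{e\in E(\Gamma)}$, as claimed.

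To make the diagram chase precise, I would argue as follows. An element $\alpha\in\bbH_1(\Gamma,\bbZ)=\ker\bbD$ maps to $0$ in $H_1(\Gamma,\bbZ)$ if and only if $q(\alpha)\in\ker\partial$ (automatic, by commutativity of \eqref{E:diag-hom}) maps to $0$ in the homology $H_1=\ker\partial$, which since $q(\alpha)$ already lies in $C_1$ means $q(\alpha)=0$ in $C_1(\Gamma,\bbZ)$ — wait, more carefully: the map $\bbH_1\to H_1$ is induced by $q$, so $\alpha\in\ker(\bbH_1\to H_1)$ iff $q(\alpha)=0$ in $H_1(\Gamma,\bbZ)$. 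Now $H_1(\Gamma,\bbZ)=\ker\partial\subseteq C_1(\Gamma,\bbZ)$ (there are no $2$-cells), so $q(\alpha)=0$ in $H_1$ iff $q(\alpha)=0$ in $C_1$, i.e.\ $\alpha\in\ker q=K$. Thus $\ker(\bbH_1\to H_1)=\bbH_1\cap K=K$ by the inclusion established above.

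**Main obstacle.** The only subtlety is bookkeeping: being careful that the induced map $\bbH_1(\Gamma,\bbZ)\to H_1(\Gamma,\bbZ)$ really is just the restriction of $q$ (so that its kernel is $\bbH_1\cap\ker q$, not something larger coming from boundaries), which is immediate here because $C_\bullet$ and $\bbC_\bullet$ are concentrated in degrees $0$ and $1$ — there are no $2$-chains, hence no boundaries in degree $1$, and a "cycle" in degree $1$ is just an element of the kernel of the boundary map. Everything else is a one-line computation. I do not expect any real difficulty.
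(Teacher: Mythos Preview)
Your proof is correct and follows essentially the same approach as the paper: both verify that each $\er+\el$ lies in $\ker\bbD=\bbH_1$, and then show that any element of $\bbH_1$ mapping to zero in $H_1$ must lie in the subgroup generated by these elements. The paper carries out the second step by writing a general element as $\sum_e(a_e\er+b_e\el)$ and using that $\{[\er]\}$ is a basis of $C_1$ to force $a_e=b_e$, whereas you phrase the same computation as $\ker(\bbH_1\to H_1)=\bbH_1\cap\ker q=K$; these are the same argument in different language.
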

\begin{proof}
From the definition of $\mathbb D$, we have  $\er+\el\in \mathbb H_1(\Gamma,\mathbb Z)$.  Clearly $\er+\el$ also maps to zero in $C_1(\Gamma,\mathbb Z)$.   On the other hand, suppose that $\sum_{e\in E} (a_e\er +b_e\el)\in \bbH_1(\Gamma,\bbZ)$ is in the kernel of the above map.  Then by definition $\sum_{e\in E} (a_e-b_e)[\er]=0$, and so $a_e=b_e$ for all $e\in E$, since $\{[\er]\}$ is a basis for $C_1(\Gamma,\mathbb Z)$.
\end{proof}

\subsection{Doubled graphs and doubled orientations}\label{S:double}

In this section, we introduce a class of graphs, called doubled graphs, together with canonical totally cyclically orientations of them, called doubled orientations,  which are obtained from a graph by doubling its edges.

\begin{defi}
Let $\Gamma$ be a connected graph.  Define the \emph{doubled graph} of $\Gamma$, denoted $\Gamma^d$, to  be the graph obtained by doubling the edges of $\Gamma$; i.e.~$\Gamma^d$ is the graph obtained from $\Gamma$ by replacing each edge $e$ of $\Gamma$
with a pair of parallel edges $e'$ and $e''$ of $\Gamma^d$ having the same endpoints as  $e$ (see Figure \ref{Fig:dg}).
 To be precise,
$
V(\Gamma^d)=V(\Gamma)$, $\OE(\Gamma^d)=\bigcup_{\er \in \OE} \{\er',\er''\}$, and we define  $s(\er')=s(\er '')=s(\er)$, $t(\er')=t(\er'')=t(\er)$ and $\iota (\er')=\el'$, $\iota(\er'')=\el''$.
Note that
$$
E(\Gamma^d)=\bigcup_{e\in E(\Gamma)}\{e',e''\}
$$
where we use the convention that if $e=\underline \er$, then $e'=\underline \er '$, $e''=\underline \er''$.
\end{defi}

\begin{figure}
\begin{equation*}
\xymatrix{
&&\text{Unoriented edges} \ E&&&\text{Oriented edges} \ \OE &&&\\
\Gamma & *{\bullet}   \ar @{-}[rr]|-{\SelectTips{cm}{}\object@{}}^{e}
  &&*{\bullet}& *{\bullet} \ar @{-}@/_1pc/[rr]|-{\SelectTips{cm}{}\object@{<}}_{\el}
 \ar@{-} @/^1pc/[rr]|-{\SelectTips{cm}{}\object@{>}}^{\er}    &&*{\bullet}&&\\
 &\\
\Gamma^d&   *{\bullet} \ar @{-}@/_1pc/[rr]|-{\SelectTips{cm}{}\object@{}}_{e''}
 \ar@{-} @/^1pc/[rr]|-{\SelectTips{cm}{}\object@{}}^{e'}    &&*{\bullet}&*{\bullet} \ar @{-}@/_1pc/[rr]|-{\SelectTips{cm}{}\object@{<}}^{\el'} \ar @{-} @/_2.5pc/[rr] |-{\SelectTips{cm}{}\object@{<}}^{\el''}
 \ar@{-} @/^1pc/[rr]|-{\SelectTips{cm}{}\object@{>}}^{\er'}  \ar@{-}@/^2.5pc/[rr]|-{\SelectTips{cm}{}\object@{>}}^{\er''}^<{v_1}^>{v_2}  &&*{\bullet}
}
\end{equation*}
\hspace{1cm}
\caption{Doubled graph.
}\label{Fig:dg}
\end{figure}
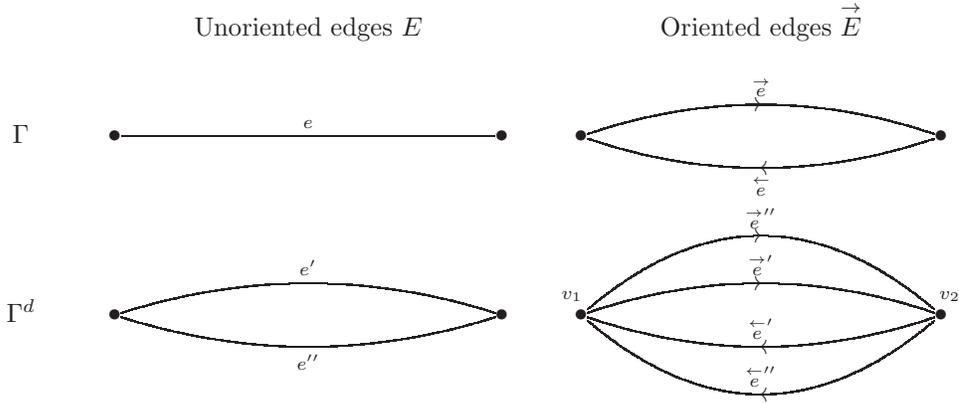

The graph  $\Gamma^d$ drawn with its unoriented edges looks like the  graph  $\Gamma$ drawn with its oriented edges (see Figure \ref{Fig:dg}).   In this way, choosing an identification of edges gives an orientation $\phi^d$ of $\Gamma^d$.   In fact, given an orientation $\phi$ of $\Gamma$, one obtains an orientation $\phi^d$ of $\Gamma^d$ by orienting each edge $e'$ in the same direction as $\phi(e)$, and each edge $e''$ in the opposite direction (see Figure \ref{Fig:do}).  
More precisely:

 \begin{defi}
Given an orientation $\phi$ of $\Gamma$, define the \emph{doubled  orientation} $$\phi^d:E(\Gamma^d)\to \OE(\Gamma^d)$$
$$
\phi^d(e')=\phi(e)'
$$
$$
\phi^d(e'')=\iota (\phi(e)'')
$$
\end{defi}

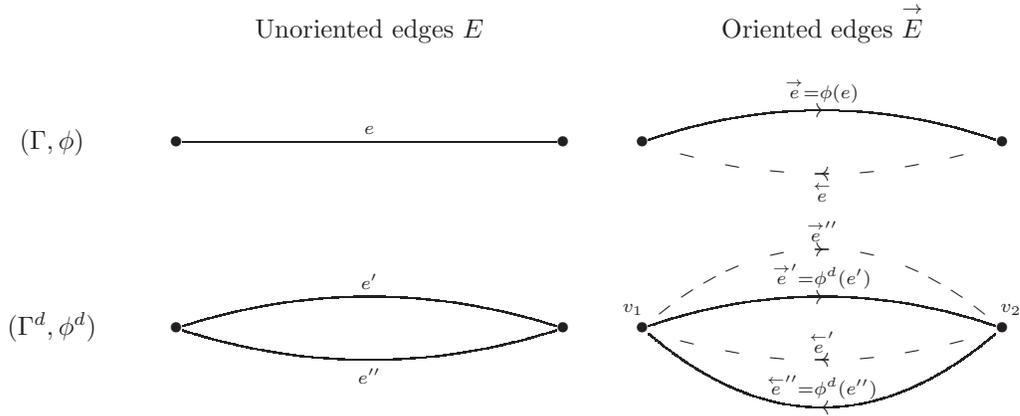
\begin{figure} 
\begin{equation*}
\xymatrix{
&&\text{Unoriented edges}\ E&&&\text{Oriented edges} \ \OE &&&\\
(\Gamma,\phi) & *{\bullet}   \ar @{-}[rr]|-{\SelectTips{cm}{}\object@{}}^{e}
  &&*{\bullet}& *{\bullet} \ar @{--}@/_1pc/[rr]|-{\SelectTips{cm}{}\object@{<}}_{\el}
 \ar@{-} @/^1pc/[rr]|-{\SelectTips{cm}{}\object@{>}}^{\er=\phi(e)}    &&*{\bullet}&&\\
 &\\
(\Gamma^d,\phi^d)&   *{\bullet} \ar @{-}@/_1pc/[rr]|-{\SelectTips{cm}{}\object@{}}_{e''}
 \ar@{-} @/^1pc/[rr]|-{\SelectTips{cm}{}\object@{}}^{e'}    &&*{\bullet}&*{\bullet} \ar @{--}@/_1pc/[rr]|-{\SelectTips{cm}{}\object@{<}}^{\el'} \ar @{-} @/_2.5pc/[rr] |-{\SelectTips{cm}{}\object@{<}}^{\el''=\phi^d(e'')}
 \ar@{-} @/^1pc/[rr]|-{\SelectTips{cm}{}\object@{>}}^{\er'=\phi^d(e')}  \ar@{--}@/^2.5pc/[rr]|-{\SelectTips{cm}{}\object@{>}}^{\er''}^<{v_1}^>{v_2}  &&*{\bullet}
}
\end{equation*}
\hspace{1cm}
\caption{Doubled orientation}\label{Fig:do}
\end{figure}

\begin{lem}\label{L:doub-or}
The doubled orientation $\phi^d$ on $\Gamma^d$ is canonical, i.e.~it does not depend on the choice of $\phi$ up to automorphisms of $\Gamma^d$, and it is totally cyclic.
\end{lem}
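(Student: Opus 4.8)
The plan is to prove the two assertions of Lemma \ref{L:doub-or} separately: first that $\phi^d$ is totally cyclic, and then that its isomorphism class (as an orientation of $\Gamma^d$) does not depend on the choice of $\phi$.

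\textbf{Total cyclicity.} I would argue by contradiction. Suppose $\phi^d$ is not totally cyclic, so (since $\Gamma$, hence $\Gamma^d$, is connected) there is a proper non-empty $W \subset V(\Gamma^d) = V(\Gamma)$ such that every edge of $\Gamma^d$ between $W$ and $V(\Gamma)\smallsetminus W$ is oriented (under $\phi^d$) in the same direction, say all from $W$ to $V(\Gamma)\smallsetminus W$. Now take any edge $e$ of $\Gamma$ joining a vertex of $W$ to a vertex of its complement (such an $e$ exists since $\Gamma$ is connected and $W$ is a proper non-empty subset of $V(\Gamma)$). By construction of $\phi^d$, the two parallel edges $e'$ and $e''$ of $\Gamma^d$ replacing $e$ receive \emph{opposite} orientations: $\phi^d(e') = \phi(e)'$ points the same way as $\phi(e)$, while $\phi^d(e'')$ points the opposite way. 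Hence one of $e', e''$ goes from $W$ to the complement and the other goes from the complement to $W$, contradicting the assumption that all crossing edges go the same direction. Therefore no such $W$ exists and $\phi^d$ is totally cyclic. (In the disconnected case one applies this to each connected component, but the statement as phrased assumes $\Gamma$ connected.)

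\textbf{Independence of $\phi$.} Here I would show that changing the orientation of a single edge $e_0$ of $\Gamma$ produces an orientation of $\Gamma^d$ that differs from the original only by the graph automorphism of $\Gamma^d$ swapping $e_0'$ and $e_0''$ and fixing everything else. Let $\sigma \in \Aut(\Gamma^d)$ be the involution that interchanges the oriented edges $\er_0'$ with $\er_0''$ and $\el_0'$ with $\el_0''$ (and fixes all other oriented edges and all vertices); this is well-defined since $e_0'$ and $e_0''$ are parallel with the same endpoints. If $\phi$ and $\psi$ are two orientations of $\Gamma$ agreeing on all edges except $e_0$ (where $\psi(e_0) = \iota(\phi(e_0))$), then a direct check from the definition of the doubled orientation shows $\psi^d = \sigma \circ \phi^d$: on $e_0'$ we have $\psi^d(e_0') = \psi(e_0)' = \iota(\phi(e_0))' = \el_0'$ where $\er_0' = \phi(e_0)'$, which is exactly $\sigma$ applied to $\phi^d(e_0'') = \iota(\phi(e_0)'') = \el_0''$ after relabeling — more carefully, one verifies the unoriented edges $e_0'$ and $e_0''$ swap roles and the induced orientations match up under $\sigma$. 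Since any two orientations of $\Gamma$ differ by a finite sequence of single-edge flips, composing the corresponding $\sigma$'s shows $\phi^d$ and $\psi^d$ differ by an automorphism of $\Gamma^d$ for arbitrary $\phi, \psi$. This proves the orientation is canonical.

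\textbf{Main obstacle.} The total cyclicity argument is clean; the only real care needed is in the independence statement, specifically in correctly tracking the bookkeeping between the two parallel copies $e', e''$ and their orientations under the edge-flip, and in identifying the right automorphism $\sigma$ — one must be sure $\sigma$ genuinely lies in $\Aut(\Gamma^d)$ (it does, since it permutes parallel edges with identical endpoints and commutes with $s, t, \iota$) and that $\psi^d = \sigma \circ \phi^d$ holds on the nose rather than merely up to a further symmetry. Reducing the general comparison to the single-edge-flip case is routine since the set of orientations of $\Gamma$ is a torsor-like object connected by such flips, so the bulk of the work is the one-flip verification, which is a short finite check.
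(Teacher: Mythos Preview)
Your argument is correct and follows essentially the same approach as the paper's own proof: total cyclicity comes from the fact that each parallel pair $e',e''$ receives opposite orientations under $\phi^d$, and independence is shown by checking that a single-edge flip in $\phi$ corresponds to the automorphism of $\Gamma^d$ swapping $e_0'$ with $e_0''$, then iterating. The slightly hesitant bookkeeping passage in your independence argument is in fact correct on inspection, so there is no gap.
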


\begin{proof}
Choose an (unoriented) edge $f\in E(\Gamma)$, define a new orientation $\phi^f$ of $\Gamma$ by reversing the orientation on $f$; i.e.~setting
$$\phi^f(e)=\begin{sis}
\iota(\phi(f)) & \text{ if }Êe=f, \\
\phi(e) & \text{ if } e\neq f.
\end{sis}
$$
Define an automorphism $\psi$ of $\Gamma^d$ that  is the identity on vertices, exchanges $f'$ and $f''$ and fixes $e'$ and $e''$ for all other edges $e\neq f$ of $\Gamma$.
Then clearly $\psi$ will send the orientation $\phi^d$ into $(\phi^f)^d$. Since every other orientation of $\Gamma$ can be obtained from $\phi$ by iteratively applying the above construction, we have
shown that $\phi^d$ is canonical.

The fact that $\phi^d$ is totally cyclic follows easily from the fact that each pair of parallel (unoriented) edges $e'$ and $e''$ of $\Gamma^d$ associated to an edge $e$ of $\Gamma$ are given opposite orientations by $\phi^d$.
\end{proof}

The oriented homology of $\Gamma$ is canonically isomorphic to the ordinary homology of $\Gamma^d$. In order to prove this, fix an orientation $\phi$ of $\Gamma$ and consider the diagram
 \begin{equation}\label{E:ori-double}
\xymatrix{
    [\phi^d(e')]\in \ar@{|->}[d] &  C_1(\Gamma^d,\bbZ)  \ar[d]^{\cong}\ar[rr]^{\partial} && \mathbb C_0(\Gamma^d,\bbZ)\ar@{=}[d]\\
    \phi(e) \in & \bbC_1(\Gamma,\bbZ)  \ar[rr]^{\bbD} && \bbC_0(\Gamma,\bbZ)\\
  }
\end{equation}
where the left vertical map is the group isomorphism obtained by, for each $e\in E(\Gamma)$, sending  $[\phi^d(e')]\in C_1(\Gamma^d,\bbZ) $ into $\phi(e)\in \mathbb C_1(\Gamma,\mathbb Z)$ (and $[\phi^d(e'')]$ to $ \iota \phi(e)$).  In short, choosing a doubled orientation  $\phi^d$ on $\Gamma^d$, then  $C_1(\Gamma^d,\mathbb Z)$ can be given a basis consisting of the oriented edges determined by $\phi^d$; these edges  are in bijection (including orientation) with the collection of \emph{all} oriented edges of $\Gamma$, which form a basis of $\mathbb C_1(\Gamma,\mathbb Z)$ (see  Figure \ref{Fig:do}).

\begin{lem}\label{L:2homo}
The above diagram \eqref{E:ori-double} is commutative and it induces an isomorphism $H_i(\Gamma^d,\bbz)\stackrel{\cong}{\longrightarrow} \bbH_i(\Gamma,\bbz)$ for $i=0,1$.
\end{lem}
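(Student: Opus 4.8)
The plan is to verify the commutativity of the square \eqref{E:ori-double} on the chosen bases and then deduce the homology isomorphism formally. First I would fix the orientation $\phi$ of $\Gamma$ once and for all, so that $\{\phi(e), \iota\phi(e)\}_{e\in E(\Gamma)}$ is the standard orthonormal basis of $\bbC_1(\Gamma,\bbZ)$, and $\{[\phi^d(e')], [\phi^d(e'')]\}_{e\in E(\Gamma)}$ is a basis of $C_1(\Gamma^d,\bbZ)$ (using that in $C_1$ of any graph each unoriented edge contributes a single basis element, namely the class of either of its orientations up to sign). The vertical identification on $C_0$ is literally the identity since $V(\Gamma^d)=V(\Gamma)$, and the vertical map on $C_1$ is the group isomorphism $g$ sending $[\phi^d(e')]\mapsto \phi(e)$ and $[\phi^d(e'')]\mapsto \iota\phi(e)$. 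So the square is a square of free $\bbZ$-modules with explicit bases and vertical maps that are isomorphisms of the left column onto the right column.

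Next I would check $\bbD\circ g = \partial$ on basis elements. Take $e\in E(\Gamma)$ with $\phi(e)=\er$, so $s(\er)=v_1$, $t(\er)=v_2$ say. On one hand $\partial[\phi^d(e')]=\partial[\er']=t(\er')-s(\er')=v_2-v_1$ by the definition of the boundary map on $\Gamma^d$ together with the definitions $s(\er')=s(\er)$, $t(\er')=t(\er)$; on the other hand $\bbD(g([\phi^d(e')]))=\bbD(\phi(e))=\bbD(\er)=t(\er)-s(\er)=v_2-v_1$. For the $e''$ generator, $\phi^d(e'')=\iota(\phi(e)'')=\el''$, so $\partial[\el'']=t(\el'')-s(\el'')=s(\er)-t(\er)=v_1-v_2$, while $\bbD(g([\phi^d(e'')]))=\bbD(\iota\phi(e))=\bbD(\el)=t(\el)-s(\el)=v_1-v_2$. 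Hence the two composites agree on a basis, so the square commutes.

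Given commutativity and that both vertical arrows are isomorphisms (the identity on degree $0$, and $g$ on degree $1$), the two rows are isomorphic two-term complexes, so they have isomorphic homology in each degree; this gives the induced isomorphisms $H_i(\Gamma^d,\bbZ)\xrightarrow{\cong}\bbH_i(\Gamma,\bbZ)$ for $i=0,1$. I would remark that the isomorphism on $H_0$ recovers the already-noted equality $\bbH_0(\Gamma,\bbZ)=H_0(\Gamma,\bbZ)$, and on $H_1$ it is compatible with the surjection $\bbH_1(\Gamma,\bbZ)\twoheadrightarrow H_1(\Gamma,\bbZ)$ of Lemma \ref{L:kerH1}; indeed postcomposing the iso with that surjection corresponds on $C_1(\Gamma^d,\bbZ)$ to the quotient identifying $[\phi^d(e')]$ and $-[\phi^d(e'')]$, i.e. to the natural surjection $H_1(\Gamma^d,\bbZ)\to H_1(\Gamma,\bbZ)$ contracting each doubled pair to a single edge. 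I also need to observe that the construction is independent of the auxiliary choice of $\phi$ up to automorphism of $\Gamma^d$, which follows immediately from Lemma \ref{L:doub-or}. The only place requiring any care — and thus the "hard part," though it is really just bookkeeping — is keeping the conventions $\el=-\er$ in $C_1$, the sign in $\iota$, and the source/target rules for $\Gamma^d$ consistent so that the $e''$ computation comes out with the correct sign; once the bases are written down explicitly this is a short check.
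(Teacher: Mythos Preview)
Your argument is correct and is exactly the straightforward verification the paper has in mind; the paper's own proof reads ``This is straightforward to check and is left to the reader,'' and your basis-element check of $\bbD\circ g=\partial$ together with the observation that both vertical maps are isomorphisms is precisely that check. The additional remarks on independence of $\phi$ and compatibility with the surjection of Lemma~\ref{L:kerH1} are pleasant but not needed for the lemma as stated.
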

\begin{proof}
This is straightforward to check and is left to the reader.
\end{proof}

\subsection{The affine semigroup ring $R(\Gamma,\phi)$ and its associated toric variety $X_{(\Gamma,\phi)}$}\label{S:Aff-smgrp}

In this section we review the definitions of the ring $R(\Gamma,\phi)$ from \cite[\S 4]{CMKVa}.
Let  $(\Gamma,\phi)$ be a graph with a totally cyclic orientation.
Consider the pointed full-dimensional rational polyhedral cone
\begin{equation}\label{E:main-cone}
\sigma_\Gamma(\phi):= \bigcap_{e\in E(\Gamma)}\{(\cdot, \phi(e)) \ge 0\}
\subset H_1(\Gamma,\bbz)\otimes_{\bbz}\bbR.
\end{equation}
(This was denoted $\sigma(\emptyset, \phi)$ in \cite[\S 3]{CMKVa}.)
According to Gordan's Lemma
(e.g.~\cite[Prop.~1.2.17]{CLS}),
 the semigroup
\begin{equation}\label{Eqn: Semgrp}
C_\Gamma(\phi):=\sigma_\Gamma(\phi)\cap H_1(\Gamma,\bbz)\subset H_1(\Gamma,\bbz)=\bbz^{b_1(\Gamma)}
\end{equation}
is a positive, normal, affine semigroup, i.e.~a finitely generated subgroup isomorphic to a subsemigroup of $\bbz^d$ for some
$d\in \bbN$, such that $0$ is the unique invertible element and such that if $m\cdot z\in C_\Gamma(\phi)$ for
some $m\in \bbN$ and
$z\in \bbz^d$, then $z\in C_\Gamma(\phi)$.

Recall (\cite[Def.~4.2]{CMKVa}) that we  define
$$
R(\Gamma,\phi):=k[C_\Gamma(\phi)]
$$
 to be the affine semigroup ring associated to $C_\Gamma(\phi)$; i.e.~the $k$-algebra whose underlying vector space has basis $\{X^c\: :\: c\in C_\Gamma(\phi)\}$
and whose multiplication is defined by $X^c\cdot X^{c'}:=X^{c+c'}$.
$R(\Gamma,\phi)$ is a normal, Cohen--Macaulay domain of dimension equal to (e.g.~\cite[Lem.~4.3]{CMKVa})
 \begin{equation}\label{eqndim}
\dim R(\Gamma,\phi)=\dim \sigma_\Gamma(\phi)= b_1(\Gamma).
\end{equation}
The affine variety
\begin{equation}\label{E:X-orient}
X_{(\Gamma,\phi)}:=\Spec R(\Gamma,\phi)
\end{equation}
is the toric variety associated to the fan $\Sigma_{(\Gamma,\phi)}$ consisting of the dual cone $\sigma_{\Gamma}(\phi)^{\vee}\subset H_1(\Gamma,\bbz)^{\vee}\otimes_{\bbz}\bbR$ together with all its faces.

\section{The cographic toric variety $X_{\Gamma}$ and the cographic toric ring $U(\Gamma)$}\label{S:cogtoric}

Fix a graph $\Gamma$. Using the notation of \S\ref{S:HomGra}, set $\mathbb M_{\Gamma}:=\mathbb H_1(\Gamma,\bbz)$ and $\mathbb N_{\Gamma}:=\mathbb H_1(\Gamma,\bbz)^{\vee}$.
Consider the pointed rational polyhedral cone
\begin{equation}\label{E:cone+}
\sigma_\Gamma:=\bigcap_{\er \in \vec E}\{(, \er)\ge 0\}\subset \mathbb M_\Gamma\otimes _{\mathbb Z}\mathbb R,
\end{equation}
and denote by $\sigma_\Gamma^\vee\subset \bbN_\Gamma\otimes _{\mathbb Z}\mathbb R$ its dual cone.
 Again from  Gordan's Lemma, the semigroup
\begin{equation}\label{E:smgrC}
C(\Gamma):=\mathbb H_1(\Gamma,\mathbb Z)\cap \sigma_{\Gamma}
\end{equation}
is a positive, normal, affine semigroup.

\begin{defi}\label{D:cogringvar}
\noindent
\begin{enumerate}[(i)]
\item The \emph{cographic toric ring}Ê $U(\Gamma)$ of $\Gamma$ (over a base field $k$) is the affine semigroup $k$-algebra  associated to $C(\Gamma)$, i.e.
$$U(\Gamma):=k[C(\Gamma)].$$
Explicitly, $U(\Gamma)$ is the $k$-algebra whose underlying vector space has basis $\{X^c\: :\: c\in C(\Gamma)\}$ and whose multiplication is defined by $X^c\cdot X^{c'}:=X^{c+c'}$.

\item The \emph{cographic toric variety} Ê$X_{\Gamma}$ of $\Gamma$ (over a base field $k$) is the affine variety
$$
X_\Gamma:=\operatorname{Spec} U(\Gamma) =\Spec k[C(\Gamma)].
$$
\end{enumerate}
\end{defi}
Observe that $X_{\Gamma}$ is the (normal) toric variety associated to the rational polyhedral fan $\Sigma_\Gamma$ in $\bbN_{\Gamma}\otimes_{\bbz} \bbR$ formed by $\sigma_\Gamma^{\vee}$ and all its faces.  We describe $\sigma_\Gamma^\vee$ in more detail in \S \ref{S:sing}.

\begin{exa}\label{E:loop1}
Let $L$ be the loop graph, i.e.~the graph with one vertex $v$ and one unoriented edge $e$ which is a loop around $v$.
Then $\bbC_1(L,\bbz)$ is freely generated by $\er$ and $\el$ and the boundary map $\bbD$ is trivial;
hence $\bbH_1(L,\bbz)=\bbC_1(L,\bbz)=\langle \er, \el\rangle$. The cone $\sigma_{L}$ of \eqref{E:cone+} is the first quadrant in $\bbH_1(L,\bbz)\otimes_{\bbz} \bbR\cong \bbR^2$ and the semigroup
$C(L)$ of \eqref{E:smgrC} is isomorphic to $\bbN^2$, generated by $\er$ and $\el$. Therefore
$$
U(L)=k[C(L)]=k[X^{\er},X^{\el}]\cong k[X,Y] \: \text{ and } \: X_L=\Spec U(L)=\bbA_k^2.
$$
\end{exa}

The cographic toric ring $U(\Gamma)$ and the cographic toric variety $X_{\Gamma}$ admit also another presentation in terms of the affine semigroup algebra (and its corresponding affine toric variety)
associated to the double graph $\Gamma^d$ with its double orientation $\Gamma^d$, see \S\ref{S:double} and \S\ref{S:Aff-smgrp}.

\begin{pro}\label{procan}
There is an isomorphism of $k$-algebras
\begin{equation*}
U(\Gamma)\cong R(\Gamma^d,\phi^d).
\end{equation*}
inducing the isomorphism $X_{\Gamma}\cong X_{(\Gamma^d,\phi^d)}$ of toric varieties.
\end{pro}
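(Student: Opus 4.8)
The plan is to unwind both sides to affine semigroup rings and exhibit a lattice isomorphism carrying one defining cone onto the other. By definition $U(\Gamma) = k[C(\Gamma)]$ with $C(\Gamma) = \mathbb H_1(\Gamma,\bbZ) \cap \sigma_\Gamma$, while $R(\Gamma^d,\phi^d) = k[C_{\Gamma^d}(\phi^d)]$ with $C_{\Gamma^d}(\phi^d) = H_1(\Gamma^d,\bbZ) \cap \sigma_{\Gamma^d}(\phi^d)$. Since a $k$-algebra isomorphism of affine semigroup algebras is induced by an isomorphism of the underlying semigroups, and a semigroup isomorphism between sub-semigroups of lattices is cut out by a lattice isomorphism identifying the two cones, it suffices to produce an isomorphism of $\bbZ$-modules $H_1(\Gamma^d,\bbZ) \xrightarrow{\cong} \mathbb H_1(\Gamma,\bbZ)$ that carries $\sigma_{\Gamma^d}(\phi^d)$ onto $\sigma_\Gamma$.

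First I would take the isomorphism $H_1(\Gamma^d,\bbZ) \xrightarrow{\cong} \mathbb H_1(\Gamma,\bbZ)$ furnished by Lemma \ref{L:2homo}, which is the map induced by sending the basis element $[\phi^d(e')]$ of $C_1(\Gamma^d,\bbZ)$ to $\phi(e)$ and $[\phi^d(e'')]$ to $\iota\phi(e)$, i.e.\ the identification of the edges of $\Gamma^d$ oriented by $\phi^d$ with the set $\OE(\Gamma)$ of \emph{all} oriented edges of $\Gamma$ (this bijection is exactly the content of the paragraph preceding Lemma \ref{L:2homo} and Figure \ref{Fig:do}). Next I would check that this identification respects the relevant pairings: under the chosen bijection of basis vectors, the scalar product on $C_1(\Gamma^d,\bbZ)$ (for which the $\phi^d$-oriented edges are orthonormal, by the conventions in \S\ref{S:HomGra} applied to $\Gamma^d$) matches the scalar product on $\bbC_1(\Gamma,\bbZ)$ (for which $\OE(\Gamma)$ is orthonormal, by \S\ref{S:HomGra}), restricted to the respective $H_1$'s.

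With the pairings identified, I would compare the two cones facet by facet. The cone $\sigma_{\Gamma^d}(\phi^d) = \bigcap_{f \in E(\Gamma^d)} \{(\cdot,\phi^d(f)) \ge 0\}$ is cut out in $H_1(\Gamma^d,\bbR)$ by one inequality for each edge $f = e'$ or $f = e''$ of $\Gamma^d$; under the bijection $E(\Gamma^d) \leftrightarrow \OE(\Gamma)$ sending $e' \mapsto \er$ and $e'' \mapsto \el$ (for the chosen $\phi$ with $\phi(e) = \er$), the defining functionals $\phi^d(e') = \er'$ and $\phi^d(e'') = \el''$ go over precisely to $\er$ and $\el$, so the inequality $(\cdot,\phi^d(f)) \ge 0$ becomes $(\cdot,\vec e_0) \ge 0$ ranging over all $\vec e_0 \in \OE(\Gamma)$. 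That is exactly the set of inequalities defining $\sigma_\Gamma = \bigcap_{\vec e_0 \in \OE(\Gamma)} \{(\cdot,\vec e_0) \ge 0\} \subset \mathbb M_\Gamma \otimes \bbR$ from \eqref{E:cone+}. Hence the lattice isomorphism carries $\sigma_{\Gamma^d}(\phi^d)$ onto $\sigma_\Gamma$, hence $C_{\Gamma^d}(\phi^d)$ onto $C(\Gamma)$, and therefore induces $R(\Gamma^d,\phi^d) \cong U(\Gamma)$ and $X_{(\Gamma^d,\phi^d)} \cong X_\Gamma$ on $\Spec$.

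The only genuine subtlety — the main obstacle — is bookkeeping the orientations correctly: one must verify that $\phi^d$ really sends $e''$ to the \emph{reverse} $\el''$ of the copy attached to $\phi(e)$ (this is the content of the definition of $\phi^d$, $\phi^d(e'') = \iota(\phi(e)'')$), so that the two functionals coming from $e'$ and $e''$ are $\er$ and $\el$ rather than $\er$ twice; and one must note that the resulting cone $\sigma_\Gamma$ is independent of the auxiliary choice of $\phi$ because reversing $\phi$ on an edge $f$ merely swaps the labels $e' \leftrightarrow e''$, which by Lemma \ref{L:doub-or} is realized by an automorphism of $\Gamma^d$ and hence does not change $R(\Gamma^d,\phi^d)$ up to isomorphism. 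Once these compatibilities are in place, everything else is formal.
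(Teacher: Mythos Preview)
Your proposal is correct and follows essentially the same approach as the paper: both use the lattice isomorphism $H_1(\Gamma^d,\bbZ)\cong \bbH_1(\Gamma,\bbZ)$ of Lemma~\ref{L:2homo} and check that it carries $\sigma_{\Gamma^d}(\phi^d)$ onto $\sigma_\Gamma$, hence identifies the semigroups and their algebras. The paper compresses this into one sentence (``it is easily checked''), whereas you spell out the facet-by-facet comparison and the orientation bookkeeping, which is exactly the content behind that phrase.
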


\begin{proof}
Comparing \eqref{E:main-cone} with \eqref{E:cone+}, it is easily checked that the isomorphism $H_1(\Gamma^d,\bbz)\stackrel{\cong}{\longrightarrow}\bbH_1(\Gamma,\bbz)$ of Lemma \ref{L:2homo} sends the cone $\sigma_{\Gamma^d}(\phi^d)$ isomorphically into the cone $\sigma_\Gamma$, and hence the semigroup $C_{\Gamma^d}(\phi^d)$ isomorphically onto the semigroup $C(\Gamma)$.  By taking the associated semigroup algebras we get the isomorphism $R(\Gamma^d,\phi^d)\cong U(\Gamma)$ and, by passing to prime spectra, we obtain that $X_{(\Gamma^d,\phi^d)}\cong X_{\Gamma}$.
\end{proof}

\section{An explicit presentation of the cographic toric ring $U(\Gamma)$} \label{S:X-without}

The aim of this section is to give an explicit presentation of the cographic toric ring $U(\Gamma)$, which also shows that $U(\Gamma)$ is a deformation of the cographic toric face ring $R(\Gamma)$ introduced
and studied in \cite{CMKVa}.

To begin, we will define a  map
$$
\psi:H_1(\Gamma,\mathbb Z)\times H_1(\Gamma,\mathbb Z)\to \mathbb Z_{\ge 0}^{E(\Gamma)}.
$$
For a cycle $z\in H_1(\Gamma,\mathbb Z)\subseteq C_1(\Gamma,\mathbb Z)$, denote by $\Supp(z)$ (\emph{support}Ê of $z$) the set of edges of $E(\Gamma)$ that appear with non-zero coefficient
in $z$. Then we can write $z$ uniquely as
$$
z=\sum_{e\in \operatorname{Supp}(z)}a_e  [\er]
$$
with $a_e>0$ for all $e\in \operatorname{Supp}(z)$.

Now if
$$
z^{(1)}=\sum_{e\in \operatorname{Supp}(z^{(1)})}a_e^{(1)}[\er^{(1)}]\ \ \text {and } \ \ z^{(2)}=\sum_{e\in \operatorname{Supp}(z^{(2)})}a_e^{(2)}[\er^{(2)}]
$$
then define
\begin{equation}\label{E:psi}
\psi(z^{(1)},z^{(2)})_e:=\left\{
\begin{array}{ll}
0& \text{if }Êe\notin \operatorname{Supp}(z^{(1)})\cap\operatorname{Supp}(z^{(2)}),\\
0 & \text{Êif }Ê[\er^{(1)}]=[\er^{(2)}],\\
\min (a_e^{(1)},a_e^{(2)})& \text{Êif }Ê[\er^{(1)}]= -[\er^{(2)}].
\end{array}\right.
\end{equation}

\begin{rem}\label{remPsiDef}
While the definition above is made independent of an orientation, and will be useful for the proof of the theorem below, the definition may be more transparent with the introduction of an orientation.
So, for the sake of exposition, choose an orientation $\phi$ of $\Gamma$.   Then a cycle $z\in H_1(\Gamma,\mathbb Z)$ has a unique expression of the form
$
z=\sum_{e\in E}\alpha_e\phi(e),
$
with the $\alpha_e\in \mathbb Z$.
Now if
$
z^{(1)}=\sum_{e\in E}\alpha_e^{(1)}\phi(e) \ \ \text {and } \ \ z^{(2)}=\sum_{e\in E}\alpha_e^{(2)}\phi(e)
$
then define
$$
\psi(z^{(1)},z^{(2)})_e=\left\{
\begin{array}{ll}
0 & \text{Êif }Ê\alpha_e^{(1)}\alpha_e^{(2)}\ge 0,\\
\min (|\alpha_e^{(1)}|,|\alpha_e^{(2)}|)& \text{otherwise.}
\end{array}\right.
$$
In other words, we are just tallying the number of edges (with multiplicity)  that two cycles share in opposite directions.
Note that this definition agrees with the one above, and does not depend on the choice of $\phi$.
\end{rem}

\begin{rem}\label{remphi}
It follows from \cite[Cor. 3.4]{CMKVa} that  $\psi(z^{(1)},z^{(2)})=0$ if and only if $z^{(1)}$ and $z^{(2)}$ lie in a common  cone of the cographic fan $\mathcal{F}_{\Gamma}^{\perp}$
on $H_1(\Gamma,\bbz)\otimes \bbR$ (see \cite[\S 1.4]{CMKVa} and the references therein).
\end{rem}

The key to obtaining an  explicit presentation of  the cographic toric ring $U(\Gamma)$ is the following alternative description of the semigroup $C(\Gamma)$ of \eqref{E:smgrC}.

\begin{pro}\label{P:explC}
The semigroup $C(\Gamma)$ is isomorphic to the set $H_1(\Gamma,\mathbb Z)\times \mathbb Z_{\ge 0}^{E(\Gamma)}$ endowed with the structure of semigroup given by
\begin{equation}\label{E:explsmgrp}
(z_1,n_1)\times (z_2,n_2)\mapsto (z_1+z_2,\psi(z_1,z_2)+n_1+n_2).
\end{equation}
\end{pro}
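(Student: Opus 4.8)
The plan is to recall that $C(\Gamma)=\bbH_1(\Gamma,\bbz)\cap\sigma_\Gamma$ sits inside $\bbC_1(\Gamma,\bbz)$, which has the oriented edges $\OE(\Gamma)$ as orthonormal basis, and to parametrize a cycle $z\in\bbH_1(\Gamma,\bbz)$ by recording, for each unoriented edge $e$, the coefficients of $\er$ and $\el$ in $z$. First I would fix an orientation $\phi$ of $\Gamma$ and use the isomorphism $H_1(\Gamma,\bbz)\xrightarrow{\sim}\bbH_1(\Gamma^d,\phi^d)\ldots$ — actually, more directly: the surjection $\bbH_1(\Gamma,\bbz)\twoheadrightarrow H_1(\Gamma,\bbz)$ of Lemma~\ref{L:kerH1} has kernel generated by $\{\er+\el\}_{e\in E}$, so every element of $\bbH_1(\Gamma,\bbz)$ is uniquely of the form $\wt z+\sum_e m_e(\er+\el)$ where $\wt z$ is the canonical lift of a cycle $z\in H_1(\Gamma,\bbz)$ having coefficients of a fixed sign; concretely, for $z=\sum_e\alpha_e\phi(e)$ with $\alpha_e\in\bbz$, the lift is $\wt z=\sum_{\alpha_e>0}\alpha_e\er+\sum_{\alpha_e<0}|\alpha_e|\el$ (taking $\er=\phi(e)$). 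The map I will construct is then $H_1(\Gamma,\bbz)\times\bbz_{\ge0}^{E(\Gamma)}\to\bbH_1(\Gamma,\bbz)$, $(z,n)\mapsto\wt z+\sum_e n_e(\er+\el)$, and I claim it is a bijection onto $C(\Gamma)$.

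The key steps, in order. (1) \emph{Bijectivity onto $\bbH_1$, ignoring the cone.} Injectivity and the fact that the image is all of $\bbH_1(\Gamma,\bbz)$ both follow from uniqueness of the expression $w=\wt z+\sum_e m_e(\er+\el)$: given $w=\sum_e(b_e\er+b_e'\el)$, set $n_e=\min(b_e,b_e')$ and $\alpha_e=b_e-b_e'$; then $w-\sum_e n_e(\er+\el)$ has, for each $e$, at least one of the two coefficients equal to zero, hence equals the canonical lift $\wt z$ of $z=\sum_e\alpha_e\phi(e)$, and one checks $z\in H_1$ because $w$ and the $\er+\el$ all lie in $\ker\bbD$. (2) \emph{The image is exactly $C(\Gamma)$.} By \eqref{E:cone+}, $w\in\sigma_\Gamma$ iff $(w,\er)\ge0$ for every oriented edge $\er$, i.e.\ iff all coordinates of $w$ in the $\OE(\Gamma)$-basis are $\ge0$; for $w=\wt z+\sum n_e(\er+\el)$ the coordinate on $\er$ is $n_e$ or $n_e+|\alpha_e|$ and on $\el$ is $n_e$ or $n_e+|\alpha_e|$, so $w\in C(\Gamma)$ iff $n_e\ge0$ for all $e$, i.e.\ iff $(z,n)\in H_1(\Gamma,\bbz)\times\bbz_{\ge0}^{E(\Gamma)}$. (3) \emph{Semigroup law.} I must compute, for $w_i=\wt z_i+\sum_e n_{i,e}(\er+\el)$, the $n$-part of $w_1+w_2$, i.e.\ write $w_1+w_2=\wt{(z_1+z_2)}+\sum_e N_e(\er+\el)$ and identify $N_e$. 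On edge $e$: if $\alpha_{1,e}\alpha_{2,e}\ge0$ the two lifts contribute to the same oriented edge and $N_e=n_{1,e}+n_{2,e}$; if they have opposite signs, say $\alpha_{1,e}>0>\alpha_{2,e}$, then $\wt z_1$ contributes $\alpha_{1,e}\er$ and $\wt z_2$ contributes $|\alpha_{2,e}|\el$, and $a\er+b\el=\min(a,b)(\er+\el)+(\text{one-sided remainder})$, so $N_e=n_{1,e}+n_{2,e}+\min(\alpha_{1,e},|\alpha_{2,e}|)$. Comparing with \eqref{E:psi}/Remark~\ref{remPsiDef}, $N_e=(n_1+n_2+\psi(z_1,z_2))_e$, which is \eqref{E:explsmgrp}.

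The main obstacle, such as it is, is bookkeeping rather than conceptual: one has to be careful that the "canonical lift'' $\wt z$ is well defined independently of $\phi$ (equivalently, that $\psi$ is orientation-independent, which is recorded in Remark~\ref{remPsiDef}), and that the remainder term in the identity $a\er+b\el=\min(a,b)(\er+\el)+\cdots$ is itself a nonnegative multiple of a single oriented edge so that step (1)'s uniqueness applies to $w_1+w_2$. I would present the argument with an orientation fixed throughout (as in Remark~\ref{remPsiDef}) to keep the indices manageable, then note at the end that the resulting semigroup structure does not depend on $\phi$ because $C(\Gamma)$ did not.
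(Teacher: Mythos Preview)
Your proposal is correct and follows essentially the same approach as the paper: the paper constructs the same canonical lift $z\mapsto\wt z$ as a section of $\bbH_1(\Gamma,\bbz)\twoheadrightarrow H_1(\Gamma,\bbz)$, shows that every element of $C(\Gamma)$ is uniquely a lift plus a nonnegative combination of the $\er+\el$, and then simply states that tracing the semigroup law through this bijection yields \eqref{E:explsmgrp}. Your write-up is in fact more detailed than the paper's on the semigroup-law computation (step~(3)), which the paper leaves entirely to the reader.
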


In order to prove the above proposition, we will need the following two  lemmas.

\begin{lem}
Under the natural surjection $\bbH_1(\Gamma,\bbz)\twoheadrightarrow H_1(\Gamma, \bbz)$ induced by \eqref{E:diag-hom}, the semigroup $C(\Gamma)$ maps surjectively onto $H_1(\Gamma,\bbz)$.
\end{lem}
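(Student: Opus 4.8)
The plan is to show surjectivity directly by exhibiting, for each ordinary cycle $z \in H_1(\Gamma,\bbz)$, an oriented cycle $\wt z \in \bbH_1(\Gamma,\bbz)$ lying in the cone $\sigma_\Gamma$ (hence in $C(\Gamma)$) that maps to $z$ under the surjection $\bbH_1(\Gamma,\bbz)\twoheadrightarrow H_1(\Gamma,\bbz)$ of Lemma~\ref{L:kerH1}. Recall from \eqref{E:cone+} that $\sigma_\Gamma$ is cut out by the conditions $(\,\cdot\,,\er)\ge 0$ for \emph{every} $\er\in\OE(\Gamma)$, so an element $w=\sum_{e}(a_e\er + b_e\el)\in\bbH_1(\Gamma,\bbz)$ lies in $C(\Gamma)$ precisely when all the coefficients $a_e,b_e$ are $\ge 0$ (using that $\OE(\Gamma)$ is an orthonormal basis of $\bbC_1$). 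Thus the claim amounts to: every ordinary cycle lifts to an oriented cycle with non-negative coefficients.

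The key step is the lifting construction. Fix an orientation $\phi$ of $\Gamma$ and write $z=\sum_{e\in E}\alpha_e\,\phi(e)$ with $\alpha_e\in\bbz$, following Remark~\ref{remPsiDef}. Define $\wt z := \sum_{e\in E}\beta_e\,\er$ where the sum is over \emph{oriented} edges and $\beta_{\phi(e)}=\alpha_e$, $\beta_{\iota\phi(e)}=0$ if $\alpha_e\ge 0$, while $\beta_{\phi(e)}=0$, $\beta_{\iota\phi(e)}=-\alpha_e$ if $\alpha_e<0$. By construction all coefficients of $\wt z$ are non-negative. Two things must be checked: (i) $\wt z$ is genuinely a cycle, i.e.\ $\mathbb D(\wt z)=0$, and (ii) $\wt z$ maps to $z$ in $H_1(\Gamma,\bbz)$. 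For (ii), under the quotient $\bbC_1\to C_1$ each $\er$ with $\underline\er=e$ maps to $\pm[\phi(e)]$ according to its orientation, so $\wt z\mapsto \sum_e \alpha_e[\phi(e)]=z$; and since $z$ is a cycle, its image lands in $H_1(\Gamma,\bbz)$ and agrees with $z$ there. For (i), note that the boundary map $\mathbb D$ on $\bbC_1$ satisfies $\mathbb D(\er)=t(\er)-s(\er)$, and the quotient map $\bbC_1\to C_1$ intertwines $\mathbb D$ with $\partial$ (diagram \eqref{E:diag-hom}); since $\partial(z)=0$, the element $\mathbb D(\wt z)=\partial(\text{image of }\wt z)=\partial(z)=0$. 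Hence $\wt z\in\bbH_1(\Gamma,\bbz)\cap\sigma_\Gamma=C(\Gamma)$ and it maps to $z$, proving surjectivity.

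I expect the main (minor) obstacle to be bookkeeping rather than anything deep: one must be careful that the map $\bbC_1\to C_1$ sends $\er$ to $[\er]$ and $\el$ to $-[\er]$, so the "lift the positive part, orient the negative part backwards" recipe really does produce a preimage with non-negative coefficients, and that this lift is a cycle in the \emph{oriented} complex (where $\el+\er$ is \emph{not} zero) rather than merely in the ordinary one. An alternative, slightly slicker route avoiding explicit coordinates: the surjection $\bbH_1\twoheadrightarrow H_1$ restricts to a surjection of cones $\sigma_{\Gamma^d}(\phi^d)\to$ something, via Proposition~\ref{procan} and Lemma~\ref{L:2homo}, but the hands-on argument above is self-contained and short, so I would present that.
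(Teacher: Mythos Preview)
Your proof is correct and is essentially the same as the paper's: both construct an explicit section $H_1(\Gamma,\bbz)\to C(\Gamma)$ by lifting each cycle to its unique preimage in $\bbC_1(\Gamma,\bbz)$ with all non-negative coefficients. The paper phrases this without fixing an orientation (writing $z=\sum_{e\in\Supp(z)}a_e[\er]$ with $a_e>0$ and sending $z\mapsto\sum a_e\er$), while you fix $\phi$ and split into cases on the sign of $\alpha_e$, but the resulting lift is identical; your verification that $\mathbb D(\wt z)=0$ via diagram~\eqref{E:diag-hom} spells out what the paper leaves implicit in the phrase ``clearly a section.''
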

\begin{proof} We will prove this by constructing a section
\begin{equation}\label{eqnsection}
H_1(\Gamma,\mathbb Z)\to C(\Gamma).
\end{equation}
Any cycle $z\in H_1(\Gamma,\mathbb Z)$ can be written uniquely in the form
$z=\sum_{e\in \operatorname{Supp}(z)}a_e[\er]$ with $a_e>0$.   Thus
$$
z\mapsto \sum_{e\in \operatorname{Supp}(z)}a_e\er
$$
gives a well defined map $H_1(\Gamma,\mathbb Z)\to C(\Gamma)$.  It is clearly a section.
\end{proof}

\begin{lem}\label{L:setC}
$C(\Gamma)$ is the sub-semigroup of $\mathbb H_1(\Gamma,\mathbb Z)$ generated by $\{\er +\el\}_{e\in E(\Gamma)}$ and the image of the section $H_1(\Gamma,\mathbb Z)\to C(\Gamma)$ defined in \eqref{eqnsection} above.
\end{lem}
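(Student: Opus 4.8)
The plan is to prove the stated equality of sub-semigroups of $\bbH_1(\Gamma,\bbz)$ by a double inclusion, only one direction of which requires work.

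For the easy inclusion I would note that the image of the section \eqref{eqnsection} lies in $C(\Gamma)$ by construction, and that each $\er+\el$ lies in $C(\Gamma)$: it lies in $\bbH_1(\Gamma,\bbz)$ by Lemma \ref{L:kerH1}, and in the orthonormal basis $\OE(\Gamma)$ its coordinates are all $0$ or $1$, so $(\er+\el,\vec f)\ge 0$ for every $\vec f\in\OE(\Gamma)$, i.e.~$\er+\el\in\sigma_\Gamma$. Since $C(\Gamma)$ is a sub-semigroup of $\bbH_1(\Gamma,\bbz)$, the sub-semigroup it generates together with the image of the section is contained in $C(\Gamma)$.

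For the reverse inclusion I would fix an orientation $\phi$ of $\Gamma$ and write an arbitrary $w\in C(\Gamma)\subseteq\bbH_1(\Gamma,\bbz)\subseteq\bbC_1(\Gamma,\bbz)$ as $w=\sum_{e\in E(\Gamma)}\big(p_e\,\phi(e)+q_e\,\iota\phi(e)\big)$ with $p_e,q_e\in\bbz$; pairing $w$ against the orthonormal basis vectors $\phi(e)$ and $\iota\phi(e)$ shows $p_e=(w,\phi(e))\ge 0$ and $q_e=(w,\iota\phi(e))\ge 0$, because $w\in\sigma_\Gamma$. Let $\bar w\in H_1(\Gamma,\bbz)$ be the image of $w$ under the surjection $\bbH_1(\Gamma,\bbz)\twoheadrightarrow H_1(\Gamma,\bbz)$ of Lemma \ref{L:kerH1}, so $\bar w=\sum_e(p_e-q_e)[\phi(e)]$, and let $s(\bar w)\in C(\Gamma)$ be its image under the section \eqref{eqnsection}. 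The key point is an edge-by-edge computation — splitting into the cases $p_e\ge q_e$ and $q_e\ge p_e$, and matching the positive-coefficient expansion of $\bar w$ on each edge — which gives
\begin{equation*}
w=s(\bar w)+\sum_{e\in E(\Gamma)}\min(p_e,q_e)\,(\er+\el).
\end{equation*}
Since all $\min(p_e,q_e)\ge 0$, this exhibits $w$ as a sum of an element of the image of the section and a non-negative integer combination of the $\er+\el$, which is exactly what is needed.

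I do not expect a genuine obstacle here: this is a combinatorial normal-form statement. The two points that need a little care are (i) checking that $s(\bar w)$ is genuinely the term to subtract — this forces one, for each edge $e$, to use in the expansion of $\bar w$ the orientation $\er$ with $(w,\er)\ge(w,\el)$ and coefficient $|p_e-q_e|$ — and (ii) confirming that the remainder $w-s(\bar w)$ has non-negative coordinates, which is immediate from the displayed identity. This lemma then feeds directly into the proof of Proposition \ref{P:explC}.
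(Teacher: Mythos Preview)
Your proof is correct and follows essentially the same approach as the paper: subtract the section of the image in $H_1(\Gamma,\bbz)$ and show the remainder is a non-negative combination of the $\er+\el$. The paper's version is marginally slicker in that it invokes Lemma~\ref{L:kerH1} to identify the form of the remainder a priori and then argues non-negativity from the fact that $s(\bar w)$ has a zero coefficient on at least one orientation of each edge, whereas you compute the coefficients $\min(p_e,q_e)$ explicitly; but this is a cosmetic difference, not a different route.
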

\begin{proof}
Clearly both $\{\er +\el\}_{e\in E(\Gamma)}$ as well as  the image of the section $H_1(\Gamma,\mathbb Z)\to C(\Gamma)$   lie in $C(\Gamma)$.

Now let $z\in C(\Gamma)$.  Recall that by definition this means that  $z=\sum_{\er \in \OE} a_{\er}\er$ with $a_{\er}\ge 0$ for all $\er \in \OE$.
Let $z'$ be the image of $z$ in $H_1(\Gamma,\mathbb Z)$ and let $z''$ be the image of $z'$ in $C(\Gamma)$ under the section.  Then $z-z''\in \ker\left(\mathbb H_1(\Gamma,\mathbb Z)\to H_1(\Gamma,\mathbb Z)\right)$.   Thus, using Lemma \ref{L:kerH1}, we can write $z=z''+\sum_{e\in E} b_e(\er +\el)$.  But by  the construction of $z''$, for all $e\in E$, the coefficient of either $\er$ or $\el$ in $z''$ is zero.  Thus $b_e\ge 0$ for all $e\in E$, and we are done.
\end{proof}

\begin{proof}[Proof of Proposition \ref{P:explC}]
It follows from Lemma \ref{L:setC} that there is an explicit bijection between the sets $C(\Gamma)$ and $H_1(\Gamma,\mathbb Z)\times \mathbb Z_{\ge 0}^{E(\Gamma)}$. By tracing the semigroup law on $C(\Gamma)$ via this bijection we ended up exactly with the  semigroup law on $H_1(\Gamma,\mathbb Z)\times \mathbb Z_{\ge 0}^{E(\Gamma)}$ given by \eqref{E:explsmgrp}, and we are done.
\end{proof}

From the explicit description of the semigroup $C(\Gamma)$ given in Proposition \ref{P:explC}, we derive the following explicit presentation of the cographic toric ring $U(\Gamma)$.

\begin{teo}\label{T:exp-pres}
Let $\Gamma$ be a connected graph.
Consider the $k$-algebra $D(\Gamma)$  whose underlying vector space has basis $\{X^zT^{\epsilon}\: :\: z\in H_1(\Gamma,\mathbb Z), \epsilon \in \mathbb Z_{\ge 0}^{E(\Gamma)}\}$
and whose multiplication is defined by  the rule $X^zT^{\epsilon}\cdot X^{z'}T^{\epsilon'}:=X^{z+z'}T^{\psi(z,z')+\epsilon+\epsilon'}$.
In other words,
$$
D(\Gamma):= \frac{k[X^z]_{z\in H_1(\Gamma,\mathbb Z)}[T_e]_{e\in E}}{(X^zX^{z'}-X^{z+z'}\vec T^{\psi(z,z')})}.
$$
Then we have an isomorphism $U(\Gamma)\cong D(\Gamma)$.
\end{teo}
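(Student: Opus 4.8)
The plan is to construct the isomorphism $U(\Gamma) \cong D(\Gamma)$ directly from the explicit description of the semigroup $C(\Gamma)$ obtained in Proposition \ref{P:explC}. Recall that $U(\Gamma) = k[C(\Gamma)]$ is by definition the affine semigroup algebra on $C(\Gamma)$, with basis $\{X^c : c \in C(\Gamma)\}$ and multiplication $X^c \cdot X^{c'} = X^{c+c'}$. Meanwhile $D(\Gamma)$ is, by construction, the $k$-algebra with basis $\{X^z T^\epsilon : z \in H_1(\Gamma,\mathbb Z),\ \epsilon \in \mathbb Z_{\geq 0}^{E(\Gamma)}\}$ and multiplication $X^z T^\epsilon \cdot X^{z'} T^{\epsilon'} = X^{z+z'} T^{\psi(z,z') + \epsilon + \epsilon'}$. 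So $D(\Gamma)$ is \emph{also} an affine semigroup algebra, namely the one on the set $H_1(\Gamma,\mathbb Z) \times \mathbb Z_{\geq 0}^{E(\Gamma)}$ equipped with the semigroup law \eqref{E:explsmgrp}.

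First I would make this last observation precise: define $S$ to be the set $H_1(\Gamma,\mathbb Z) \times \mathbb Z_{\geq 0}^{E(\Gamma)}$ with the binary operation \eqref{E:explsmgrp}, and note that $D(\Gamma) = k[S]$ by the very definition of $D(\Gamma)$, once one identifies the basis vector $X^z T^\epsilon$ with the monomial attached to $(z,\epsilon) \in S$. (One should remark that $S$ is indeed a commutative, associative monoid with identity $(0,0)$; commutativity is immediate since $\psi$ is symmetric, and associativity follows from Proposition \ref{P:explC}, since $S$ is shown there to be isomorphic to $C(\Gamma)$, which is a genuine semigroup — alternatively one can check the cocycle identity $\psi(z_1,z_2) + \psi(z_1+z_2, z_3) = \psi(z_2,z_3) + \psi(z_1, z_2+z_3)$ directly.) Then Proposition \ref{P:explC} provides a semigroup isomorphism $\Phi : S \xrightarrow{\sim} C(\Gamma)$ — explicitly, $\Phi(z,\epsilon) = s(z) + \sum_{e \in E} \epsilon_e (\er + \el)$, where $s : H_1(\Gamma,\mathbb Z) \to C(\Gamma)$ is the section \eqref{eqnsection} and the sum uses the generators from Lemma \ref{L:setC}.

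Next I would invoke the functoriality of the affine semigroup algebra construction: any isomorphism of monoids $\Phi : S \to C(\Gamma)$ induces an isomorphism of $k$-algebras $k[S] \xrightarrow{\sim} k[C(\Gamma)]$ sending the basis element corresponding to $s \in S$ to $X^{\Phi(s)}$, since it is bijective on the distinguished monomial bases and respects multiplication by construction. Applying this to the $\Phi$ of the previous paragraph and using $k[S] = D(\Gamma)$, $k[C(\Gamma)] = U(\Gamma)$ yields the desired isomorphism $D(\Gamma) \cong U(\Gamma)$.

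I do not expect any serious obstacle here: the statement is essentially a reformulation of Proposition \ref{P:explC} at the level of algebras rather than semigroups, and the only content beyond that proposition is the bookkeeping identification of $D(\Gamma)$ as the semigroup algebra $k[S]$. The one point that deserves a careful sentence is verifying that the multiplication rule defining $D(\Gamma)$ really is associative (so that $D(\Gamma)$ is a well-defined ring and the quotient presentation makes sense) — but this is exactly the associativity of $S$, which is free of charge from Proposition \ref{P:explC}. Thus the proof is short: cite Proposition \ref{P:explC}, observe $D(\Gamma) = k[S]$ with $S$ as above, and transport the semigroup isomorphism through the functor $k[-]$.
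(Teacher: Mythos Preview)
Your proposal is correct and follows essentially the same approach as the paper: both recognize $D(\Gamma)$ as the semigroup algebra $k[S]$ on $S = H_1(\Gamma,\mathbb Z)\times\mathbb Z_{\ge 0}^{E(\Gamma)}$ with the law \eqref{E:explsmgrp}, and then invoke Proposition~\ref{P:explC} to transport the semigroup isomorphism $S\cong C(\Gamma)$ to a $k$-algebra isomorphism $D(\Gamma)\cong U(\Gamma)$. Your additional remarks on associativity and the explicit form of $\Phi$ are sound but not strictly needed, since associativity is inherited from $C(\Gamma)$ via Proposition~\ref{P:explC} exactly as you note.
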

\begin{proof}
Observe that $D(\Gamma)$ is the semigroup $k$-algebra associated to the set  $H_1(\Gamma,\mathbb Z)\times \mathbb Z_{\ge 0}^{E(\Gamma)}$ endowed with semigroup law \eqref{E:explsmgrp}.
Then the result follows from Proposition \ref{P:explC}.
\end{proof}

\begin{rem}\label{R:defcog1}
From Theorem \ref{T:exp-pres} together with Remark \ref{remphi}, it follows that by setting the variables $T_e$ equal to zero we get a surjective morphism of $k$-algebras
\begin{equation}\label{E:quotmor}
U(\Gamma)\cong D(\Gamma)\twoheadrightarrow R(\Gamma)
\end{equation}
where $R(\Gamma)$ is the cographic toric face ring introduced in \cite[Def.~1.2]{CMKVa}.
Thus the cographic toric variety $X_\Gamma$ can be viewed as a deformation of the cographic toric face variety  $\operatorname{Spec}R(\Gamma)$ (\cite[Def.~1.2]{CMKVa}) over the base
$\operatorname{Spec}k[T_e]_{e\in E}$.
\end{rem}

\section{Singularities of $X_\Gamma$}\label{S:sing}

The aim of this section is to study the singularities of the cographic toric variety $X_{\Gamma}$.
Recall from Definition \ref{D:cogringvar}  that $X_{\Gamma}$ is the (normal) toric variety associated to the rational polyhedral fan $\Sigma_\Gamma$ in $\bbN_{\Gamma}\otimes_{\bbz} \bbR$ formed by the
rational polyhedral cone $\sigma_\Gamma^{\vee}$ \eqref{E:cone+} and all its faces.
The following lemma summarizes the basic properties of the cone $\sigma_{\Gamma}^\vee$.

\begin{lem}\label{L:prop-cone}
Let $\Gamma$ be a connected graph.  Set $\mathbb M_\Gamma=\mathbb H_1(\Gamma,\mathbb Z)$.  

\begin{enumerate}[(i)]

\item \label{L:prop-cone0}  The cone $\sigma_{\Gamma}^\vee\subseteq \bbM_{\Gamma}^\vee=\bbN_{\Gamma}$ is equal to
$$\sigma_{\Gamma}^\vee=\left\{\sum_{\er\in \OE(\Gamma)}a_{e}\cdot (\ , \er)\: : \: a_{e}\geq 0. \right\},$$
where $(\ ,\er)$ denotes the element of $\bbN_{\Gamma}$ obtained by pairing an element of $\bbM_\Gamma$ with $\er$ via the scalar product $(\ ,\ )$ defined in \S\ref{S:HomGra}.

\item \label{L:prop-cone1} The cone $\sigma_{\Gamma}^\vee$ is pointed and of full dimension in $\bbN_{\Gamma}\otimes \mathbb R$.

\item \label{L:prop-cone2} The extremal rays  of $\sigma_{\Gamma}^\vee$ are of the form $\langle (\ , \er) \rangle:= \bbR_{\geq 0}\cdot (\ , \er) $ as $\er$ varies in  $\OE(\Gamma)$. Moreover, given
 $\er_1\neq \er_2$, we have that
 $$ \langle (\ , \er_1) \rangle = \langle (\ , \er_2) \rangle \Longleftrightarrow \un \er_1=\un \er_2 \: \text{ is a separating edge of } \Gamma. $$

\item \label{L:prop-cone3} For every $\er\in \OE(\Gamma)$, the primitive element of the ray $\langle (\ , \er)\rangle$ with respect to the lattice $\bbN_{\Gamma}$ is $(\ , \er)$, i.e.
$\langle (\ , \er)\rangle\cap \bbN_{\Gamma}=\bbZ_{\geq 0} \cdot (\ , \er)$.

\end{enumerate}
\end{lem}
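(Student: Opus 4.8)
The plan is to work directly with the cone $\sigma_\Gamma \subset \mathbb{M}_\Gamma \otimes \mathbb{R} = \mathbb{H}_1(\Gamma,\mathbb{Z})\otimes\mathbb{R}$ and its dual, exploiting the fact that $\sigma_\Gamma$ is, by definition \eqref{E:cone+}, cut out by the halfspaces $\{(\,\cdot\,, \er) \ge 0\}$ as $\er$ ranges over $\OE(\Gamma)$. Since $\mathbb{H}_1(\Gamma,\mathbb{Z})$ sits inside $\mathbb{C}_1(\Gamma,\mathbb{Z})$ with the $\OE(\Gamma)$ forming an orthonormal basis, each functional $(\,\cdot\,,\er)$ is simply restriction to $\mathbb{H}_1$ of a coordinate functional, and these functionals visibly span $\mathbb{M}_\Gamma^\vee = \mathbb{N}_\Gamma$ over $\mathbb{R}$ (indeed over $\mathbb{Z}$, since the $\er$ span $\mathbb{C}_1$).

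\textbf{Parts (i) and (ii).} For \eqref{L:prop-cone0}, the standard duality for a cone of the form $\sigma = \bigcap_i \{\langle\,\cdot\,, u_i\rangle \ge 0\}$ gives $\sigma^\vee = \operatorname{Cone}(u_1,\dots,u_m)$ provided $\sigma$ is itself the dual of $\operatorname{Cone}(u_i)$; this is immediate from the biduality $\sigma^{\vee\vee}=\sigma$ once one knows $\sigma_\Gamma$ is a genuine (closed, convex) polyhedral cone, which it is by construction. For \eqref{L:prop-cone1}, pointedness of $\sigma_\Gamma^\vee$ is equivalent to full-dimensionality of $\sigma_\Gamma$, and conversely; so I would argue both at once. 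Full dimensionality of $\sigma_\Gamma$ was already asserted in \S\ref{S:cogtoric} (it is a pointed \emph{rational polyhedral} cone there, and the text of \S\ref{S:Aff-smgrp} records that $\dim \sigma_\Gamma(\phi) = b_1(\Gamma)$ for the analogous $R(\Gamma,\phi)$); combined with Proposition \ref{procan} identifying $X_\Gamma$ with $X_{(\Gamma^d,\phi^d)}$ and hence $\sigma_\Gamma$ with $\sigma_{\Gamma^d}(\phi^d)$, full-dimensionality and pointedness both transfer. Alternatively one checks pointedness of $\sigma_\Gamma^\vee$ directly: the $(\,\cdot\,,\er)$ span $\mathbb{N}_\Gamma\otimes\mathbb{R}$, so their nonnegative span cannot contain a line unless some nonnegative combination vanishes with not all coefficients zero — but for a totally cyclic-type configuration (here $\phi^d$ is totally cyclic by Lemma \ref{L:doub-or}) this is controlled. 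I'd use the transfer argument as the clean route.

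\textbf{Part (iii).} The extremal rays of $\sigma_\Gamma^\vee = \operatorname{Cone}((\,\cdot\,,\er))_{\er\in\OE(\Gamma)}$ are among the rays $\langle(\,\cdot\,,\er)\rangle$, and I must show each such ray actually \emph{is} extremal, and determine exactly when two of them coincide. The coincidence statement is the combinatorial heart: $\langle(\,\cdot\,,\er_1)\rangle = \langle(\,\cdot\,,\er_2)\rangle$ means $(\,\cdot\,,\er_1)$ and $(\,\cdot\,,\er_2)$ are positive multiples of each other as functionals on $\mathbb{H}_1(\Gamma,\mathbb{Z})$, i.e. every cycle pairs with $\er_1$ and $\er_2$ proportionally. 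I would prove $\un\er_1=\un\er_2$ is separating $\iff$ this holds: if $\underline{e}$ is separating then no cycle uses it, so $(\,\cdot\,,\er)\equiv 0$ on $\mathbb{H}_1$ for both orientations, giving the degenerate "equality" of zero functionals — here one should be slightly careful about whether separating edges contribute a ray at all (the zero functional is not a ray); likely the intended reading is that the distinct rays are indexed by non-separating edges up to the $\underline{e}'\leftrightarrow \underline{e}''$ identification coming from doubling, and via Proposition \ref{procan} this matches the extremal-ray description of $R(\Gamma^d,\phi^d)$ from \cite[\S4]{CMKVa}. For the converse, if $\underline{e}$ is non-separating pick a cycle $z$ through $e$; then $(z,\er_1)$ and $(z,\er_2)$ involve the $e$-coefficient with opposite signs for the two orientations, forcing non-proportionality unless $\er_1,\er_2$ are the \emph{same} oriented edge. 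Extremality of each surviving ray then follows by producing a supporting hyperplane, or again by citing the corresponding fact for $X_{(\Gamma^d,\phi^d)}$ through Proposition \ref{procan}.

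\textbf{Part (iv).} This is the primitivity statement: $\langle(\,\cdot\,,\er)\rangle \cap \mathbb{N}_\Gamma = \mathbb{Z}_{\ge 0}\cdot(\,\cdot\,,\er)$. It suffices to show $(\,\cdot\,,\er)$ is a primitive vector of the lattice $\mathbb{N}_\Gamma = \mathbb{H}_1(\Gamma,\mathbb{Z})^\vee$, i.e. not a proper integer multiple of any element of $\mathbb{N}_\Gamma$. Equivalently, the functional $z\mapsto (z,\er)$ on $\mathbb{H}_1(\Gamma,\mathbb{Z})$ is surjective onto $\mathbb{Z}$ (assuming $\underline{e}$ non-separating). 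To see this, exhibit a cycle $z_0\in\mathbb{H}_1(\Gamma,\mathbb{Z})$ with $(z_0,\er) = 1$: take a simple cycle passing through $e$ exactly once, oriented so that it traverses $e$ in the direction $\er$; such a cycle exists precisely because $\underline{e}$ is non-separating, and for a simple cycle the coefficient of each edge in the orthonormal-basis expansion is $\pm 1$. Then $(z_0,\er)=1$, so the functional is surjective, hence primitive. I expect \textbf{part (iii)}, specifically the if-and-only-if for coincidence of rays and the careful bookkeeping around separating edges, to be the main obstacle; the cleanest path throughout is probably to transport everything via Proposition \ref{procan} to $(\Gamma^d,\phi^d)$ and invoke the structural results on $R(\Gamma,\phi)$ and the cographic fan from \cite{CMKVa}, rather than re-deriving the cone combinatorics by hand.
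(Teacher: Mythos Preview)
Your overall strategy---transfer via Proposition~\ref{procan} to $(\Gamma^d,\phi^d)$ and invoke \cite[Prop.~3.1]{CMKVa}---is exactly what the paper does for parts \eqref{L:prop-cone1} and \eqref{L:prop-cone2}, so that route is sound.

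However, your direct arguments for \eqref{L:prop-cone2} and \eqref{L:prop-cone3} contain a common error: you are conflating the oriented homology $\mathbb H_1(\Gamma,\mathbb Z)$ with the ordinary homology $H_1(\Gamma,\mathbb Z)$. It is \emph{not} true that $(\,\cdot\,,\er)\equiv 0$ on $\mathbb H_1$ when $\underline e$ is separating. Indeed, for \emph{every} edge $e$ the element $\er+\el$ lies in $\mathbb H_1(\Gamma,\mathbb Z)$ (since $\mathbb D(\er+\el)=0$), and $(\er+\el,\er)=1$. So the functional $(\,\cdot\,,\er)$ is never zero, and separating edges do give honest rays. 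What actually happens for a separating edge is that $(\,\cdot\,,\er)=(\,\cdot\,,\el)$ as functionals on $\mathbb H_1$: a flow-across-the-cut argument shows that any $z\in\mathbb H_1$ has equal $\er$- and $\el$-coefficients when $e$ separates. Your ``intended reading'' that separating edges contribute no ray is therefore incorrect; they contribute one ray, counted once.

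The same observation gives the paper's proof of \eqref{L:prop-cone3}, which is shorter than yours and covers all edges uniformly: since $\er+\el\in\mathbb M_\Gamma$ and $(\er+\el,\er)=1$, the functional $(\,\cdot\,,\er)$ already hits $1$ on a lattice point, hence is primitive. Your argument via a simple cycle through $e$ works only for non-separating edges (as you note), leaving the separating case unaddressed; the element $\er+\el$ handles both at once.
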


\begin{proof}
\eqref{L:prop-cone0} follows from \eqref{E:cone+}, using  the definition  of a dual cone. Also,  the first part of \eqref{L:prop-cone2}  follows from \eqref{L:prop-cone1}.
We will deduce the remaining properties of $\sigma_{\Gamma}^\vee$ from the properties of its dual cone $\sigma_{\Gamma}\subset \bbM_{\Gamma}$, which is isomorphic to the cone
$\sigma_{\Gamma^d}(\sigma^d)\subset H_1(\Gamma^d,\bbz)$ as shown in the proof of Proposition \ref{procan}.

According to \cite[Prop.~3.1]{CMKVa}, the cone $\sigma_{\Gamma^d}(\phi^d)$ is a pointed and full-dimensional cone in
$H_1(\Gamma^d,\bbz)\otimes \bbR$.
By duality, we deduce that \eqref{L:prop-cone1} holds.

Observe now that if $e$ is a non-separating edge of $\Gamma$, then the orientation $\phi^d_{|\Gamma^d\setminus \{e'\}}$ (resp. $\phi^d_{|\Gamma^d\setminus \{e''\}}$) induced by $\phi^d$ on the graph $\Gamma^d\setminus  \{e'\}$
(resp. $\Gamma^d\setminus  \{e''\}$)  is still totally cyclic. On the other hand, if $e$ is a separating edge of $\Gamma$ then the corresponding edges $e'$ and $e''$ of $\Gamma^d$ form a pair of parallel edges; hence the orientation $\phi^d_{|\Gamma^d\setminus \{e', e''\}}$ induced by $\phi^d$ on $\Gamma^d\setminus  \{e', e''\}$ is still totally cyclic, while neither the orientation induced by $\phi^d$  on $\Gamma^d\setminus  \{e'\}$
nor the one induced on $\Gamma^d\setminus  \{e''\}$ is totally cyclic.  
Therefore, \cite[Prop.~3.1]{CMKVa} implies that the codimension one faces of $\sigma_{\Gamma^d}(\phi^d)$ are given by (with the notation of 
\S\ref{S:Aff-smgrp}) 
\begin{enumerate}[(i)]
\item \label{E:face1} $\sigma_{\Gamma^d\setminus\{e'\}}\left(\phi^d_{|\Gamma^d\setminus \{e'\}}\right)$ and $\sigma_{\Gamma^d\setminus\{e''\}}\left(\phi^d_{|\Gamma^d\setminus \{e''\}}\right)$ \: for any non-separating edge $e$ of 
$\Gamma$;
\item \label{E:face2} $\sigma_{\Gamma^d\setminus\{e', e''\}}\left(\phi^d_{|\Gamma^d\setminus \{e', e''\}}\right)$  \: for any separating edge $e$ of  $\Gamma$.
\end{enumerate}
The faces of type \eqref{E:face1} are given by  intersecting $\sigma_{\Gamma^d}(\phi^d)$ with, respectively, the hyperplanes $\{(\cdot, \phi^d(e'))=0\}$ and $\{(\cdot, \phi^d(e''))=0\}$ for any non-separating edge $e$ of $\Gamma$; on the other hand, the faces in \eqref{E:face2} are given by intersecting with the hyperplanes $\{(\cdot, \phi^d(e'))=0\}=\{(\cdot, \phi^d(e''))=0\}$ for any separating edge $e$ of  $\Gamma$.
By duality, we obtain  \eqref{L:prop-cone2}.

Part \eqref{L:prop-cone3}: consider the element $\er+\el\in \bbH_1(\Gamma,\bbZ)=\bbM_{\Gamma}$. Since $(\er+\el, \er)=1$, we get that $(\ ,\er)$ is the primitive element of the ray $\langle (\ , \er)\rangle$. 
\end{proof}

\begin{cor} \label{CorDim}
$X_{\Gamma}$  does not contain torus factors and it has dimension equal to
\begin{equation}\label{E:dim-bbH}
\dim  X_{\Gamma}=\dim \sigma_{\Gamma}^\vee =\rank \bbH_1(\Gamma,\bbz)=b_1(\Gamma)+|E(\Gamma)|.
\end{equation}

\end{cor}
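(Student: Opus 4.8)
The plan is to deduce both assertions directly from Lemma \ref{L:prop-cone}. Recall that, by Definition \ref{D:cogringvar} and the observation immediately following it, $X_{\Gamma}$ is the affine toric variety $\operatorname{Spec} k[C(\Gamma)]$ associated to the cone $\sigma_{\Gamma}^{\vee}\subseteq \bbN_{\Gamma}\otimes_{\bbz}\bbR$, where $\bbN_{\Gamma}=\bbH_1(\Gamma,\bbz)^{\vee}$.

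For the statement on dimensions, Lemma \ref{L:prop-cone}\eqref{L:prop-cone1} tells us that $\sigma_{\Gamma}^{\vee}$ is pointed and full-dimensional in $\bbN_{\Gamma}\otimes\bbR$; hence $\dim\sigma_{\Gamma}^{\vee}=\rank\bbN_{\Gamma}=\rank\bbH_1(\Gamma,\bbz)$, and since the dimension of an affine toric variety equals the rank of its ambient lattice we also get $\dim X_{\Gamma}=\rank\bbH_1(\Gamma,\bbz)$. The final equality in \eqref{E:dim-bbH} is then the elementary computation $\rank\bbH_1(\Gamma,\bbz)=2|E(\Gamma)|-|V(\Gamma)|+1=b_1(\Gamma)+|E(\Gamma)|$ coming from \eqref{E:dim-H}. (As an alternative one may combine Proposition \ref{procan} with \eqref{eqndim} applied to $\Gamma^d$, using $b_1(\Gamma^d)=|E(\Gamma^d)|-|V(\Gamma^d)|+1=2|E(\Gamma)|-|V(\Gamma)|+1$.)

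For the absence of torus factors I would invoke the standard criterion (see e.g.\ \cite{CLS}) that the affine toric variety $U_{\sigma}$ of a cone $\sigma\subseteq N\otimes\bbR$ has no torus factor if and only if $\sigma$ spans $N\otimes\bbR$ as a real vector space; equivalently, if and only if the extremal rays of $\sigma$ span $N\otimes\bbR$. Applying this with $\sigma=\sigma_{\Gamma}^{\vee}$ and $N=\bbN_{\Gamma}$, the conclusion follows at once from the full-dimensionality asserted in Lemma \ref{L:prop-cone}\eqref{L:prop-cone1} (the extremal rays themselves being explicitly described in Lemma \ref{L:prop-cone}\eqref{L:prop-cone2}). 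Since all of the substantive content is already packaged in Lemma \ref{L:prop-cone}, there is no genuine obstacle in this corollary; the only points to get right are the toric-geometry dictionary — the dimension of an affine toric variety, and the criterion for the presence of a torus factor — together with the bookkeeping identity relating $\rank\bbH_1(\Gamma,\bbz)$ to $b_1(\Gamma)+|E(\Gamma)|$.
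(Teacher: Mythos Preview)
Your proof is correct and follows essentially the same approach as the paper: both deduce the result directly from Lemma~\ref{L:prop-cone}\eqref{L:prop-cone1}, invoking the standard toric-geometry facts that full-dimensionality of the cone gives the absence of torus factors and that the dimension of the toric variety equals the rank of the ambient lattice (the paper cites \cite[Prop.~3.3.9(c)]{CLS} and \cite[Prop.~6.6.1]{BH} for these).
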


\begin{proof}
This  follows directly from Lemma \ref{L:prop-cone} \eqref{L:prop-cone1}, using \cite[Prop.~3.3.9 (c)]{CLS} and \cite[Prop.~6.6.1]{BH}.
\end{proof}

We will want  the following result describing the behavior of the
cographic toric variety in the presence of separating edges and loops.

\begin{lem}\label{L:sep-edge}
Let $\Gamma$ be a connected graph with $n$ separating edges and $m$ loops, and let $\Gamma'$ be the graph obtained from $\Gamma$ by contracting the separating edges and deleting the loops.
Then we have that
$$
X_{\Gamma}= \mathbb A_k^{n+2m}\times X_{\Gamma'}.
$$
\end{lem}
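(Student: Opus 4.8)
The plan is to reduce everything to the combinatorics of the cone $\sigma_\Gamma^\vee$ and its dual $\sigma_\Gamma$, which we already understand well via Lemma \ref{L:prop-cone} and the isomorphism $X_\Gamma \cong X_{(\Gamma^d,\phi^d)}$ of Proposition \ref{procan}. First I would observe that, by induction, it suffices to treat separately the case of a single separating edge (showing $X_\Gamma \cong \mathbb{A}^1_k \times X_{\Gamma/e}$ where $\Gamma/e$ is the contraction) and the case of a single loop (showing $X_\Gamma \cong \mathbb{A}^2_k \times X_{\Gamma \setminus e}$ where $\Gamma \setminus e$ deletes the loop); the general statement then follows by contracting/deleting one edge at a time, noting that contracting a separating edge or deleting a loop does not affect whether the remaining edges are separating or loops. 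The loop case is in fact already essentially in Example \ref{E:loop1}: for a loop $e$ at a vertex $v$, the homology $\mathbb{H}_1(\Gamma,\mathbb{Z})$ splits off the rank-two summand $\langle \er, \el\rangle$ on which the cone $\sigma_\Gamma$ restricts to the first quadrant, contributing the factor $k[X^{\er}, X^{\el}] \cong k[X,Y]$, i.e. the $\mathbb{A}^2_k$ factor, while the complementary summand reproduces $U(\Gamma \setminus e)$ with its cone.

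For the separating edge case, the key point is the following decomposition of homology. If $e$ is a separating edge, then $e$ appears in no cycle, so $H_1(\Gamma,\mathbb{Z}) = H_1(\Gamma/e,\mathbb{Z})$ (contracting $e$ does not change $b_1$). On the oriented side, Lemma \ref{L:kerH1} gives the exact sequence $0 \to \bigoplus_{f\in E}\mathbb{Z}(\er[f]+\el[f]) \to \mathbb{H}_1(\Gamma,\mathbb{Z}) \to H_1(\Gamma,\mathbb{Z}) \to 0$, and using the section from \eqref{eqnsection} together with Lemma \ref{L:setC}, I get that $C(\Gamma)$ is identified (as a semigroup, via Proposition \ref{P:explC}) with $H_1(\Gamma,\mathbb{Z}) \times \mathbb{Z}_{\ge 0}^{E(\Gamma)}$ under the twisted law \eqref{E:explsmgrp}. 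Now the crucial observation is that the twisting function $\psi$ only sees edges lying in the support of cycles: since a separating edge $e$ lies in the support of no cycle, $\psi(z_1,z_2)_e = 0$ identically for every pair $z_1,z_2$. Hence the $e$-th coordinate $\mathbb{Z}_{\ge 0}$ of $\mathbb{Z}_{\ge 0}^{E(\Gamma)}$ splits off as a \emph{direct summand} of the semigroup $C(\Gamma)$, i.e. $C(\Gamma) \cong \mathbb{Z}_{\ge 0} \oplus C(\Gamma/e)$ where $C(\Gamma/e) = H_1(\Gamma/e,\mathbb{Z}) \times \mathbb{Z}_{\ge 0}^{E(\Gamma/e)}$ with the law \eqref{E:explsmgrp} for $\Gamma/e$ (here I use $E(\Gamma) = E(\Gamma/e) \sqcup \{e\}$ and $H_1(\Gamma,\mathbb{Z}) = H_1(\Gamma/e,\mathbb{Z})$, and that $\psi$ for $\Gamma$ restricted to cycles supported away from $e$ agrees with $\psi$ for $\Gamma/e$). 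Passing to semigroup algebras, $U(\Gamma) = k[C(\Gamma)] \cong k[\mathbb{Z}_{\ge 0}] \otimes_k k[C(\Gamma/e)] \cong k[T] \otimes_k U(\Gamma/e)$, which gives $X_\Gamma \cong \mathbb{A}^1_k \times X_{\Gamma/e}$.

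Alternatively — and this might be cleaner to write — I could argue entirely on the fan side: by Lemma \ref{L:prop-cone}\eqref{L:prop-cone2}, for a separating edge $e$ the two rays $\langle(\,,\er)\rangle$ and $\langle(\,,\el)\rangle$ coincide, and by Lemma \ref{L:prop-cone}\eqref{L:prop-cone3} the primitive generator is $(\,,\er)$ with $(\er+\el,\er)=1$; splitting $\mathbb{M}_\Gamma = \mathbb{H}_1(\Gamma,\mathbb{Z})$ as $\mathbb{Z}(\er+\el) \oplus \mathbb{M}'$ for a suitable complement $\mathbb{M}'$ one checks that $\sigma_\Gamma = \mathbb{R}_{\ge 0}(\er+\el) \oplus \sigma'$ where $\sigma'$ is (isomorphic to) $\sigma_{\Gamma/e}$, and a product decomposition of cones with one factor a ray yields the $\mathbb{A}^1_k$ factor by the standard toric dictionary (\cite[\S 3.1]{CLS}); similarly for loops one splits off $\mathbb{Z}^2$ with the first quadrant, giving $\mathbb{A}^2_k$. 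I expect the main obstacle to be purely bookkeeping: carefully matching the semigroup law \eqref{E:explsmgrp} for $\Gamma$ with that for $\Gamma/e$ (respectively $\Gamma\setminus e$) under the identifications $E(\Gamma)=E(\Gamma/e)\sqcup\{e\}$ and $H_1(\Gamma,\mathbb{Z})=H_1(\Gamma/e,\mathbb{Z})$, and in particular verifying the compatibility of the two $\psi$'s — which reduces to the elementary fact that the support of a cycle in $\Gamma$ never contains a separating edge or a loop-at-reversed-orientation, so the extra coordinates genuinely decouple. Everything else is the standard equivalence between affine semigroup rings and affine toric varieties, together with the fact that a direct sum of affine semigroups corresponds to a tensor product of their semigroup algebras, hence a product of the associated affine varieties.
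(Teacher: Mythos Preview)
Your proposal is correct and follows essentially the same approach as the paper: both arguments rest on the explicit semigroup description $C(\Gamma)\cong H_1(\Gamma,\mathbb Z)\times\mathbb Z_{\ge 0}^{E(\Gamma)}$ with the $\psi$-twisted law (Proposition~\ref{P:explC}/Theorem~\ref{T:exp-pres}), and both use that a separating edge never lies in the support of a cycle (so $\psi(\cdot,\cdot)_e\equiv 0$ and the $T_e$-factor splits off) while each loop contributes an $\mathbb A^2_k$-factor. The only differences are cosmetic: the paper handles all separating edges and loops simultaneously by writing down the full tensor decomposition of $D(\Gamma)$ (in particular treating each loop via the relation $X^{\gamma_i}X^{-\gamma_i}=T_{e_i}$ inside $D(\Gamma)$), whereas you induct one edge at a time and split off loops directly at the level of $\mathbb H_1$ and the cone $\sigma_\Gamma$; your alternative fan-side argument is exactly the ``lengthier, but more elementary'' route the paper alludes to in the remark immediately following its proof.
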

\begin{proof}
Let $\{f_1,\ldots,f_n\}$ be the separating edges of $\Gamma$, $\{e_1,\ldots,e_m\}$ the loops of $\Gamma$ and set $\gamma_i:=[\er_i]\in H_1(\Gamma,\bbz)$.
Clearly we have that $H_1(\Gamma,\bbz)=H_1(\Gamma',\bbz)\oplus \bigoplus_{i=1}^m \langle \gamma_i\rangle$. Moreover, if we denote by $\psi$ the map \eqref{E:psi} associated to $H_1(\Gamma,\bbz)$
and by $\psi'$ the analogous map associated to $H_1(\Gamma',\bbz)$, then we have that
\begin{equation*}
\psi\left(z^{(1)}+\sum_i n_i^{(1)} \gamma_i \ , \  z^{(2)}+\sum_i n_i^{(2)}\gamma_i\right)_{e}=Ê
\begin{cases}
\psi'(z^{(1)}, z^{(2)}) & \text{Êif }Êe\not\in \{e_1,\ldots,e_m\},\\
0 & \text{ if } e=e_i \text{ and }Ên_i^{(1)}n_i^{(2)}\geq 0, \\
\min (|n_i^{(1)}|,|n_i^{(2)}|) &  \text{ if } e=e_i \text{ and }Ên_i^{(1)}n_i^{(2)}< 0, \\
\end{cases}
\end{equation*}
for any $z^{(j)}\in H_1(\Gamma',\bbz)$ and $n_j^{(i)}\in \bbz$. This implies easily that (using the notation of Theorem \ref{T:exp-pres})
$$D(\Gamma)=D(\Gamma')\otimes_k \frac{k[X^{\gamma_1},X^{-\gamma_1},\ldots, X^{\gamma_{m}}, X^{-\gamma_{m}}, T_{e_1},\ldots,T_{e_m}]}{\left(X^{\gamma_1} X^{-\gamma_1}-T_{e_1},\ldots,
X^{\gamma_{m}}X^{-\gamma_{m}}-T_{e_m}\right)}\otimes_k k[T_{f_1},\ldots, T_{f_n}].$$
By passing to the prime spectra and using Theorem \ref{T:exp-pres}, we conclude.
\end{proof}

\begin{rem}
A lengthier, but more elementary argument can be made for Lemma \ref{L:sep-edge} directly from the definitions, without using Theorem \ref{T:exp-pres}.
\end{rem}

From the point of view of birational geometry, the singularities of $X_{\Gamma}$ are particularly nice:

\begin{teo}\label{T:sing}
The variety $X_\Gamma$ is Gorenstein, terminal and  has  rational singularities.
\end{teo}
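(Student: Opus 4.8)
The plan is to reduce everything to the toric dictionary, exploiting the explicit description of the fan $\Sigma_\Gamma$ given by Lemma \ref{L:prop-cone}. Recall that $X_\Gamma$ is the affine toric variety $\Spec k[C(\Gamma)]$, where $C(\Gamma) = \sigma_\Gamma^\vee{}^\vee \cap \bbM_\Gamma$, and by Lemma \ref{L:prop-cone} the ray generators of the cone $\sigma_\Gamma^\vee \subset \bbN_\Gamma$ are exactly the primitive vectors $(\ ,\er)$ for $\er \in \OE(\Gamma)$, one for each oriented edge (with two oriented edges giving the same ray precisely when the underlying edge is separating). By Lemma \ref{L:sep-edge}, $X_\Gamma \cong \bbA_k^{n+2m} \times X_{\Gamma'}$ where $\Gamma'$ has no separating edges and no loops; since the class of Gorenstein terminal rational singularities is stable under products with affine space (terminal and rational are both local and behave well under smooth base change, and Gorenstein is preserved under $\otimes_k k[t]$), we may assume $\Gamma$ has no separating edges and no loops. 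In that case the rays of $\sigma_\Gamma^\vee$ are in bijection with $\OE(\Gamma)$.

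First I would establish the Gorenstein property. An affine toric variety $U_\sigma$ is Gorenstein if and only if there is an element $m_\sigma$ in the dual lattice with $\langle m_\sigma, u_\rho\rangle = 1$ for every primitive ray generator $u_\rho$ of $\sigma$; equivalently the primitive ray generators lie on an affine hyperplane at lattice distance one. Here $\sigma = \sigma_\Gamma^\vee$, its ray generators are the $(\ ,\er)$, and I claim the required element is exactly a cycle-type vector: since $(\er + \el,\, \er) = 1$ for every edge $e$ (this is the computation in part \eqref{L:prop-cone3} of Lemma \ref{L:prop-cone}), and $\er + \el$ pairs to $1$ with \emph{both} orientations of $e$, the single element $\xi := \sum_{e \in E(\Gamma)} (\er + \el) \in \bbM_\Gamma$... wait, one needs $\langle \xi, (\ ,\er)\rangle = 1$ for all $\er$ simultaneously, so instead one uses the dual description: $(\ ,\er)$ lands in the hyperplane $\{(\ ,\er)\} \subset \bbN_\Gamma$ cut out by pairing with a fixed $m \in \bbM_\Gamma$; the natural candidate is the image in $\bbM_\Gamma = \bbH_1(\Gamma,\bbz)$ of a generic interior lattice point, but more cleanly, since $C(\Gamma) \cong H_1(\Gamma,\bbz) \times \bbz_{\ge 0}^{E(\Gamma)}$ as a set by Proposition \ref{P:explC} and the $\bbz_{\ge 0}^{E(\Gamma)}$ factor accounts for the $\er + \el$'s, the canonical module is generated by $X^c$ with $c$ corresponding to $(0, (1,\dots,1))$. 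This is the step I expect to require the most care — pinning down the right linear functional and checking it pairs to $1$ with every $(\ ,\er)$, using that $(\ ,\er)$ and $(\ ,\el)$ are distinct rays when $e$ is non-separating but their difference is $2(\ ,\er)$-ish... concretely one wants $m$ with $(m,\er) + (m,\el) = $ something forcing $(m,\er)=1$; since $(m,\el) = -(m,\er) + (\text{correction})$... I would instead argue directly from the explicit presentation $D(\Gamma)$ in Theorem \ref{T:exp-pres} that $U(\Gamma)$ has a single canonical generator, or cite that a normal affine semigroup ring whose polytope of ray generators is reflexive-at-height-one is Gorenstein.

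Next, rational singularities: toric varieties over a field of characteristic zero always have rational singularities (they are even Cohen–Macaulay with rational — this is classical, cf.\ \cite[Thm.~11.4.2]{CLS} or Hochster's theorem that normal semigroup rings are Cohen–Macaulay, combined with the toric resolution of singularities), so this step is immediate once normality is in hand, and normality of $U(\Gamma) = k[C(\Gamma)]$ is part of the assertion that $C(\Gamma)$ is a normal affine semigroup (Gordan's Lemma, as noted after \eqref{E:smgrC}). Finally, terminality: for a Gorenstein affine toric variety $U_\sigma$ with $m_\sigma$ the Gorenstein functional, the singularities are terminal iff the only lattice points of $\sigma$ lying in the slab $\{0 \le \langle m_\sigma, -\rangle \le 1\}$ are $0$ and the primitive ray generators themselves (this is the toric terminality criterion, \cite[Prop.~11.4.12]{CLS} or the discussion around \cite{reid}). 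So I must show: every lattice point $u \in \sigma_\Gamma^\vee \cap \bbN_\Gamma$ with $\langle m_\sigma, u\rangle = 1$ is one of the $(\ ,\er)$. Writing $u = \sum_{\er} a_\er (\ ,\er)$ with $a_\er \ge 0$ (Lemma \ref{L:prop-cone}\eqref{L:prop-cone0}), the height-one condition via the Gorenstein functional should force $\sum a_\er = 1$; combined with $a_\er \ge 0$ and the fact that the $(\ ,\er)$ are the primitive generators and the only lattice points on their rays (Lemma \ref{L:prop-cone}\eqref{L:prop-cone3}), this gives $u = (\ ,\er)$ for a single $\er$. This combinatorial check — translating "height one forces coefficient sum one" and handling the redundancy among rays — is the second place where I'd be most careful, but it should follow cleanly from the structure of $C(\Gamma)$ in Proposition \ref{P:explC}, where the semigroup is literally $H_1 \times \bbz_{\ge 0}^{E}$ and the height function is "sum of the $\bbz_{\ge 0}$ coordinates after adding the correction term $\psi$", whose value $1$ at a lattice point forces it to be a single generator $\er$ or $\el$.
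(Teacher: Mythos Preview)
Your overall strategy matches the paper's exactly: reduce via Lemma~\ref{L:sep-edge}, use the toric Gorenstein criterion with an explicit functional, cite the standard fact for rational singularities, and verify the lattice-point criterion for terminality. Two places need correction, though.

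For the Gorenstein step, your first instinct was right and your ``wait'' is unwarranted. The element $\xi = \sum_{e\in E(\Gamma)} (\er+\el) = \sum_{\er\in\OE(\Gamma)} \er$ lies in $\bbM_\Gamma = \bbH_1(\Gamma,\bbz)$ (since $\bbD(\er+\el)=0$ for every $e$), and by orthonormality of the basis $\OE(\Gamma)$ one has $(\xi,\er)=1$ for \emph{every} $\er\in\OE(\Gamma)$ simultaneously. This is precisely the $m_\Gamma$ the paper writes down; there is no need to retreat to the presentation $D(\Gamma)$.

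For terminality there is a genuine gap. Writing $u = \sum_{\er} a_{\er}\,(\ ,\er)$ with $a_{\er}\ge 0$ and deducing $\sum a_{\er}=1$ from $\langle m_\Gamma,u\rangle=1$ is fine, but ``$\sum a_{\er}=1$ with $a_{\er}\ge 0$'' does \emph{not} force $u$ to be a ray generator: the cone $\sigma_\Gamma^\vee$ is not simplicial (Proposition~\ref{P:smooth}), so the coefficients $a_{\er}$ are neither unique nor a~priori integral, and a convex combination of the $(\ ,\er)$ with real coefficients summing to~$1$ can certainly be a lattice point without being a vertex. The paper supplies the missing argument: pair $u$ with the auxiliary elements $\gamma_e := \er+\el \in \bbM_\Gamma$ to get $\langle\gamma_e,u\rangle = a_{\er}+a_{\el} \in \bbZ\cap[0,1]$, hence equal to $0$ or $1$; this forces all the weight onto a single edge $e$, i.e.\ $u = a_{\er}(\ ,\er) + a_{\el}(\ ,\el)$. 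Then, using that $e$ is non-separating, choose a cycle $\gamma\in\bbH_1(\Gamma,\bbz)$ containing $\er$ but not $\el$; pairing gives $a_{\er}=\langle\gamma,u\rangle\in\bbZ\cap[0,1]$, so $a_{\er}\in\{0,1\}$ and $u$ equals $(\ ,\er)$ or $(\ ,\el)$. Your suggestion to extract this from Proposition~\ref{P:explC} is on the wrong side of the duality: that proposition describes $C(\Gamma)\subset\bbM_\Gamma$, whereas the terminality check lives in $\sigma_\Gamma^\vee\cap\bbN_\Gamma$.
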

\begin{proof}
It is well-known that any (normal)  toric variety has rational singularities (e.g.~\cite[Thm.~11.4.2]{CLS}) and  is Cohen--Macaulay
(e.g.~\cite[Thm.~9.2.9]{CLS}).

According to   \cite[Prop.~8.2.12]{CLS} (see also Proposition \ref{proGC}), $X_{\Gamma}$ is Gorenstein, i.e.~the canonical divisor $K_{X_{\Gamma}}$ is Cartier, if and only if there exists an element
$m\in \mathbb M_\Gamma$ such that $\langle m, u_{\rho}\rangle=1$ for any extremal ray $\rho$ of $\sigma_{\Gamma}^\vee$, where
$\langle\ ,\  \rangle$ denotes the canonical pairing between $\mathbb M_\Gamma$ and $\mathbb N_\Gamma=\mathbb M_\Gamma^{\vee}$ and $u_{\rho}$ denotes the minimal generator of $\rho\cap \mathbb N_\Gamma$. Consider now the following element of $\mathbb C_1(\Gamma,\bbz)$
\begin{equation}\label{E:ele-m}
m_\Gamma:=\sum_{\er\in \OE} \er=\sum_{e\in E}(\er+\el).
\end{equation}
Since $\mathbb D(\el)=-\mathbb D(\er)$, we get that $\mathbb D(m_\Gamma)=0$, and hence that $m_\Gamma\in
\mathbb M_\Gamma=\bbH_1(\Gamma,\bbz)$. By definition of the scalar product (see \S\ref{S:HomGra}), we easily get that
\begin{equation*}
(m_\Gamma,\er)=1 \text{ for any } \er \in \OE.
\end{equation*}
For brevity, we will use the notation  $u_{\er}$ for the element $(\ ,\er) \in \mathbb N_\Gamma$ determined by  $\er \in \OE$. The above equality translates
into
\begin{equation}\label{E:m-rays}
\langle m_\Gamma,u_{\er}\rangle=1.
\end{equation}
By Lemma \ref{L:prop-cone}, we get that the rays of $\sigma_{\Gamma}^\vee$ are all of the form $\langle u_{\er} \rangle=\bbR_{\geq 0}\cdot u_{\er}$ (as $\er$ varies in $\OE$) and that  $u_{\er}$ is the primitive element of the ray 
$\langle u_{\er}\rangle$. Therefore we conclude that $X_{\Gamma}$ is Gorenstein.

Finally, let us show that $X_{\Gamma}$ has terminal singularities. Since we have already proved that $X_{\Gamma}$ is
Gorenstein, we conclude that $X_{\Gamma}$ has canonical singularities by \cite[Prop.~11.4.11]{CLS}. Thus, using
\cite[Prop.~11.4.12]{CLS} (see also Proposition \ref{proCC}), we conclude that in order to prove that $X_{\Gamma}$ has terminal singularities
it is (necessary and) sufficient to prove the following:

\un{CLAIM}: \emph{If $x\in \sigma_{\Gamma}^\vee\cap \mathbb N_\Gamma$ is such that
$\langle m_\Gamma, x\rangle=1$, then $x=u_{\er}$ for some $\er\in \OE$}.

By Lemma \ref{L:prop-cone} \eqref{L:prop-cone0},  we can write $x=\sum_{\er \in \OE } a_{\er}\cdot
u_{\er}$ for certain $a_{\er}\in \bbR_{\geq 0}$. Note that such a representation may not be unique if the cone
$\sigma_{\Gamma}^\vee$ is not simplicial, but we fix one such representation. By hypothesis, and recalling the definition of $m_\Gamma$ 
\eqref{E:ele-m}, we have that
\begin{equation}\label{E:1sum}
1=\langle m_\Gamma, x\rangle=\left\langle \sum_{\er '\in \OE} \er', \sum_{\er\in \OE}
a_{\er}\cdot u_{\er}\right\rangle=\sum_{\er\in \OE} a_{\er}.
\end{equation}
Consider now, for any $e\in E(\Gamma)$, the element $\gamma_e:=\er+\el\in \bbC_1(\Gamma,\bbz)$.
As above, since $\mathbb D(\el)=-\mathbb D(\er)$, we get that $\mathbb D(\gamma_e)=0$; i.e.~that
$\gamma_e\in \mathbb M_\Gamma=\bbH_1(\Gamma,\bbz)$. Using \eqref{E:1sum} and the fact that $a_{\er}\geq 0$, we get that
\begin{equation*}
\langle \gamma_e, x\rangle = a_{\er}+a_{\el} \in [0,1].
\end{equation*}
Moreover, since $x\in \mathbb N_\Gamma$ and $\gamma_e\in \mathbb M_\Gamma$, we get that $\langle \gamma_e, x\rangle\in \bbz$;
hence $\langle \gamma_e, x\rangle$ is equal either to $1$ or to $0$. In the first case, all the coefficients
$a_{\er}$ with $\er\neq \er$ or $\el$ must vanishes because of \eqref{E:1sum}; hence
$x=a_{\er} u_{\er}+ a_{\el} u_{\el}$. In the second case, i.e.~if $\langle \gamma_e, x\rangle=0$, then necessarily
$a_{\er}=a_{\el}=0$. We can therefore iterate the argument using all the  edges of $\Gamma$ and,
since $x\neq 0$, in the end we find that necessarily
\begin{equation}\label{E:2sum}
x=a_{\er} u_{\er}+ a_{\el} u_{\el} \: \text{ for some } e\in E(\Gamma).
\end{equation}
By virtue of  Lemma \ref{L:sep-edge} we may assume  that $\Gamma$ does not have separating edges, so in particular $e$ is not a separating
edge of $\Gamma$. Using this, it is easy to see  that there exists a cycle $\gamma\in \bbH_1(\Gamma, \bbz)$ that contains $\er$ but not $\el$.
Therefore, from \eqref{E:1sum} and \eqref{E:2sum}, we get that
$$\langle \gamma, x\rangle = a_{\er} \in [0,1].$$
However, since $x\in \mathbb N_\Gamma$ and $\gamma\in \mathbb M_\Gamma$, we get that $\langle \gamma, x\rangle\in \bbz$;
hence $a_{\er}=\langle \gamma, x\rangle$ is equal either to $1$ or to $0$, which implies that $x$ is equal either
to $u_{\er}$ or to $u_{\el}$; the claim is now proved.
\end{proof}

We can now give a  complete classification of the graphs $\Gamma$ for which $X_\Gamma$ is smooth or  has finite quotient singularities. 
\begin{pro}\label{P:smooth}
Let $\Gamma$ be a connected graph. The following conditions are equivalent:
\begin{enumerate}[(i)]
\item \label{P:smooth0} $X_\Gamma = \bbA_k^{b_1(\Gamma)+|E(\Gamma)|}$;
\item \label{P:smooth1} $X_\Gamma$ is smooth;
\item \label{P:smooth2} $X_\Gamma$ has finite quotient singularities;
\item \label{P:smooth3} $\Gamma$ is tree-like, i.e.~$\Gamma$ becomes a tree after removing all the loops.
\end{enumerate}
\end{pro}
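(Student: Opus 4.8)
The plan is to prove the chain of equivalences $(i)\Rightarrow(ii)\Rightarrow(iii)\Rightarrow(iv)\Rightarrow(i)$, with the toric-combinatorial core lying in the step $(iii)\Rightarrow(iv)$. The implications $(i)\Rightarrow(ii)$ and $(ii)\Rightarrow(iii)$ are formal (affine space is smooth, and smooth varieties have trivial, hence finite, quotient singularities). For $(iv)\Rightarrow(i)$ I would invoke Lemma \ref{L:sep-edge}: if $\Gamma$ is tree-like then, after contracting its separating edges and deleting its loops, one is left with a single vertex and no edges, so $X_{\Gamma'}=\operatorname{Spec} k$ and $X_\Gamma=\mathbb A_k^{n+2m}$, where $n$ is the number of separating edges and $m$ the number of loops; since for a tree-like graph every edge is either a loop or (being a tree after deleting loops) a separating edge, one checks $n+2m=|E(\Gamma)\setminus\{\text{loops}\}|+2\cdot\#\{\text{loops}\}=|E(\Gamma)|+m=|E(\Gamma)|+b_1(\Gamma)$, matching the dimension count in Corollary \ref{CorDim}.

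The heart of the argument is $(iii)\Rightarrow(iv)$, proved by contrapositive: assuming $\Gamma$ is not tree-like, I want to show $X_\Gamma$ does not have finite quotient singularities. By Lemma \ref{L:sep-edge} I may reduce to the case where $\Gamma$ has no separating edges and no loops (these only introduce polynomial factors, which do not affect the presence of non-quotient singularities); then ``not tree-like'' means $b_1(\Gamma)\ge 1$ and $\Gamma$ has at least one edge, in fact $|E(\Gamma)|\ge 2$ after the reduction. The standard toric criterion (e.g.~\cite[Thm.~11.4.8 / \S 11.4]{CLS}) says an affine toric variety $U_\sigma$ has finite quotient (equivalently: $\mathbb Q$-factorial, equivalently simplicial) singularities if and only if the cone $\sigma$ is simplicial. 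So it suffices to exhibit that $\sigma_\Gamma^\vee$ is \emph{not} simplicial, i.e.~that it has more extremal rays than its dimension. By Lemma \ref{L:prop-cone}, the extremal rays of $\sigma_\Gamma^\vee$ are exactly $\langle(\ ,\er)\rangle$ as $\er$ ranges over $\OE(\Gamma)$, and since $\Gamma$ has no separating edges these $2|E(\Gamma)|$ rays are pairwise distinct; meanwhile $\dim\sigma_\Gamma^\vee=b_1(\Gamma)+|E(\Gamma)|$ by Corollary \ref{CorDim}. Thus $\sigma_\Gamma^\vee$ is simplicial iff $2|E(\Gamma)|=b_1(\Gamma)+|E(\Gamma)|$, i.e.~iff $|E(\Gamma)|=b_1(\Gamma)$; since in the reduced case $b_1(\Gamma)=|E(\Gamma)|-|V(\Gamma)|+1\le |E(\Gamma)|$ with equality only when $|V(\Gamma)|=1$ — which is excluded once we have assumed no loops and at least one edge — the cone is non-simplicial, so $X_\Gamma$ does not have finite quotient singularities.

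I expect the main obstacle to be bookkeeping the reduction via Lemma \ref{L:sep-edge} carefully: one must confirm that a product $\mathbb A_k^{n+2m}\times X_{\Gamma'}$ has finite quotient singularities if and only if $X_{\Gamma'}$ does (immediate, since $\mathbb A^{n+2m}$ is smooth and quotient singularities are stable under products with smooth factors), and that ``$\Gamma$ tree-like'' corresponds precisely to ``$\Gamma'$ is a point.'' A secondary point requiring care is the claim that after deleting loops and contracting separating edges the remaining graph either is a point (tree-like case) or genuinely has $b_1\ge 1$, $|V|\ge 1$ and at least one non-loop, non-separating edge; this is where the definitions of Section \ref{S:Prelim} must be deployed. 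Once these reductions are in place, the simplicial-cone dichotomy is a one-line count using Lemma \ref{L:prop-cone} and Corollary \ref{CorDim}.
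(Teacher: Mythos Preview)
Your proposal is correct and follows essentially the same route as the paper's proof: the trivial implications $(i)\Rightarrow(ii)\Rightarrow(iii)$, the reduction via Lemma~\ref{L:sep-edge}, and the simplicial-cone count using Lemma~\ref{L:prop-cone} together with \cite[Thm.~11.4.8]{CLS}. The only cosmetic difference is that the paper reduces to the case of no separating edges (keeping loops), so that condition~(iv) becomes ``$\Gamma$ has a unique vertex,'' whereas you additionally delete loops; both reductions lead to the identical comparison $2|E(\Gamma)|$ versus $b_1(\Gamma)+|E(\Gamma)|=2|E(\Gamma)|-|V(\Gamma)|+1$.
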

\begin{proof}

\eqref{P:smooth3} $\Rightarrow$ \eqref{P:smooth0} follows from Lemma \ref{L:sep-edge}.

\eqref{P:smooth0} $\Rightarrow$ \eqref{P:smooth1} $\Rightarrow$ \eqref{P:smooth2} are obvious.

\eqref{P:smooth2} $\Leftrightarrow$ \eqref{P:smooth3}: First of all, from Lemma \ref{L:sep-edge} we get that it is enough to prove the statement under the hypothesis that $\Gamma$ has no separating edges.  Note that, under this assumption, condition \eqref{P:smooth3} now simply becomes that $\Gamma$ has a unique vertex.
According to \cite[Thm.~11.4.8]{CLS}, $X_{\Gamma}$ has finite quotient singularities if and only if $\sigma_{\Gamma}^\vee$ is simplicial, i.e.~the number of its extremal rays is
equal to its dimension. By Lemma \ref{L:prop-cone}\eqref{L:prop-cone1}, the dimension of $\sigma_{\Gamma}^\vee$ is equal to
$$\dim \sigma_{\Gamma}^\vee=\dim \bbH_1(\Gamma,\bbz)=b_1(\Gamma)+|E(\Gamma)|=2|E(\Gamma)|-|V(\Gamma)|+1. $$
and the number of its extremal rays is equal to $2|E(\Gamma)|$ by Lemma \ref{L:prop-cone}\eqref{L:prop-cone2}.
Therefore $\sigma_{\Gamma}^\vee$ is simplicial if and only if $\Gamma$ has a unique vertex, and we are done.
\end{proof}

Denote by $\un 0$ the unique torus fixed point of the affine toric variety $X_{\Gamma}$ and let $\m$ the maximal ideal of $U(\Gamma)$ corresponding to it.
Explicitly, under the isomorphism $U(\Gamma)\cong D(\Gamma)$ of Theorem \ref{T:exp-pres}, the ideal $\m$ is generated by the variables $X^z$ and the variables $T_e$.
The dimension of the tangent space of $X_\Gamma$ at $\un{0}$, or equivalently the embedded dimension of $U(\Gamma)$ at $\m$, is easy to determine in terms of  the (unoriented) circuits $\Cir(\Gamma)$  of $\Gamma$  and of the loops $\Loops(\Gamma)$ of $\Gamma$.

\begin{pro}\label{protan}
The zariski tangent space $T_{\un 0}(X_\Gamma)$ at $\un{0}$ has dimension equal to
$$\dim T_{\un 0}(X_\Gamma)= 2 |\Cir(\Gamma)|+|E(\Gamma)|-|\Loops(\Gamma)|.$$
\end{pro}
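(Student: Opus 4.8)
The plan is to compute $\dim T_{\un 0}(X_\Gamma)=\dim_k \m/\m^2$ as the number of elements of the Hilbert basis (the unique minimal system of semigroup generators) of the positive affine semigroup $C(\Gamma)$. Since $U(\Gamma)=k[C(\Gamma)]$ is graded by the positive semigroup $C(\Gamma)$ and $\m=\bigoplus_{0\neq c\in C(\Gamma)}k\,X^{c}$, one has $\m^{2}=\bigoplus k\,X^{c}$, the sum over those $c$ that can be written as $c_{1}+c_{2}$ with $c_{1},c_{2}\in C(\Gamma)\setminus\{0\}$; hence $\m/\m^{2}$ has $k$-basis indexed by the \emph{irreducible} elements of $C(\Gamma)$, i.e.\ the nonzero elements admitting no such decomposition. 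So everything reduces to counting irreducibles.

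The key observation is that $C(\Gamma)$ is a flow semigroup. Because $\OE(\Gamma)$ is an orthonormal basis of ${\mathbb C}_{1}(\Gamma,\mathbb R)$, one has $(v,\er)=v_{\er}$ for $v=\sum_{\er}v_{\er}\,\er$, so the inequalities defining $\sigma_{\Gamma}$ in \eqref{E:cone+} simply say $v_{\er}\ge 0$ for all $\er$; combined with $\mathbb M_{\Gamma}=\bbH_{1}(\Gamma,\bbZ)=\ker\mathbb D$ this yields
$$C(\Gamma)=\{\,v\in\bbZ_{\ge 0}^{\OE(\Gamma)}\ :\ \mathbb D(v)=0\,\},$$
the semigroup of nonnegative integer circulations on the directed graph $\vec\Gamma$ with vertex set $V(\Gamma)$ and one arc for each element of $\OE(\Gamma)$ (so every edge $e$ of $\Gamma$ gives a pair of antiparallel arcs $\er,\el$); this $\vec\Gamma$ is $\Gamma^{d}$ with its doubled orientation, in accordance with Proposition \ref{procan}. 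I would then establish the classical description of the Hilbert basis of such a semigroup: it consists exactly of the characteristic vectors $\chi_{C}\in\bbZ_{\ge 0}^{\OE(\Gamma)}$ of the simple directed circuits $C$ of $\vec\Gamma$. Each $\chi_{C}$ is irreducible, since a nonempty proper subset of the arcs of a directed cycle supports no nonzero circulation (some vertex of it has an outgoing but no incoming arc, or vice versa). Conversely, any nonzero $v\in C(\Gamma)$ dominates some $\chi_{C}$: starting from an arc of positive weight and leaving each successive vertex along a positively weighted arc, which $\mathbb D(v)=0$ always permits, one produces a closed walk, hence a simple directed circuit $C$ with $\chi_{C}\le v$; so if $v\neq\chi_{C}$ it is reducible. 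As a simple directed circuit is determined by its arc set, distinct circuits give distinct vectors, and therefore $\dim T_{\un 0}(X_\Gamma)$ is the number of simple directed circuits of $\vec\Gamma$.

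It remains to count the simple directed circuits of $\vec\Gamma$. I would split into two cases according to whether $C$, projected to $\Gamma$ by identifying each pair $\er,\el$, uses some edge twice. If $C$ uses both arcs $\er$ and $\el$ of some non-loop edge $e$, then, since $C$ visits each vertex at most once, it must be precisely the $2$-cycle $s(\er)\to t(\er)\to s(\er)$ traversing $\er$ then $\el$; conversely each non-loop edge yields one such $2$-cycle, contributing $|E(\Gamma)|-|\Loops(\Gamma)|$ circuits. Otherwise $C$ traverses every edge of $\Gamma$ in at most one direction, so the edges of $\Gamma$ underlying $C$ form a connected subgraph in which every vertex has degree two, that is, a circuit $D\in\Cir(\Gamma)$ (a loop being the one-edge degenerate case), and $C$ is one of the two orientations of $D$ as a directed cycle; for a loop $f$ these two options are just the single arcs $\er[f]$ and $\el[f]$. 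So each circuit $D$ contributes exactly two simple directed circuits, the two families are plainly disjoint and exhaustive, and we obtain $\dim T_{\un 0}(X_\Gamma)=2\,|\Cir(\Gamma)|+|E(\Gamma)|-|\Loops(\Gamma)|$.

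The argument involves no single hard point; the place that needs care is the uniform treatment of loops — a loop of $\Gamma$ counts as a circuit in $\Cir(\Gamma)$, and in $\vec\Gamma$ it produces the two one-arc circuits $\er[f],\el[f]$ rather than a $2$-cycle — together with verifying that the $2$-cycle circuits and the "honest circuit" circuits account for all simple directed circuits of $\vec\Gamma$ with no double counting. One could instead bypass $\vec\Gamma$ and run the same irreducibility analysis directly on the explicit semigroup $H_{1}(\Gamma,\bbZ)\times\bbZ_{\ge 0}^{E(\Gamma)}$ of Proposition \ref{P:explC}, using the formula for $\psi$ in Remark \ref{remPsiDef} to decide which decompositions are allowed: the irreducibles then turn out to be $(0,\mathbf{e}_{f})$ for each non-loop edge $f$ together with $(\pm\gamma_{D},0)$ for each circuit $D$ with primitive cycle $\gamma_{D}$, which gives the same count.
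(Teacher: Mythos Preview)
Your proof is correct and follows essentially the same route as the paper: both identify $\dim T_{\un 0}(X_\Gamma)$ with the number of simple directed circuits of $(\Gamma^d,\phi^d)$ (equivalently, your $\vec\Gamma$) and count these by separating the $2$-cycles coming from non-loop edges from the two oriented lifts of each circuit of $\Gamma$, exactly as in Lemma~\ref{L:or-circ}. The only difference is that you prove the identification ``Hilbert basis $=$ simple directed cycles'' directly via the standard flow-decomposition argument, whereas the paper invokes \cite[Thm.~4.15(i), Prop.~5.2]{CMKVa} together with Proposition~\ref{procan} for this step.
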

\begin{proof}
By \cite[Thm.~4.15(i), Prop.~5.2]{CMKVa}, the embedded dimension of $R(\Gamma^d,\phi^d)$ at $\m$ is equal
to the cardinality of the set $\Cir_{\phi^d}(\Gamma^d)$ of oriented circuits compatibly oriented with $\phi^d$. Therefore, we conclude by applying Proposition \ref{procan} and the lemma below.
\end{proof}

\begin{lem}\label{L:or-circ}
The set $\Cir_{\phi^d}(\Gamma^d)$ of oriented circuits compatibly oriented with
$\phi^d$ is equal to
$$|\Cir_{\phi^d}(\Gamma^d)|=2|\Cir(\Gamma)|+|E(\Gamma)|-|\Loops(\Gamma)|
$$
\end{lem}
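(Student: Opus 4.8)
The goal is to count oriented circuits of $\Gamma^d$ that are compatibly oriented with the doubled orientation $\phi^d$. I would organize the count according to how a circuit of $\Gamma^d$ projects down to $\Gamma$ under the edge-doubling map $\Gamma^d \to \Gamma$ (sending $e', e''$ both to $e$). A circuit $\overline{C}$ of $\Gamma^d$ (i.e.~a connected subgraph with $b_1 = 1$, no separating edges) uses some set of edges of $\Gamma^d$; projecting to $\Gamma$, each edge $e$ of $\Gamma$ is covered $0$, $1$, or $2$ times (by $e'$ only, $e''$ only, or both $e'$ and $e''$). The key structural observation is that there are exactly two types of circuits in $\Gamma^d$: (a) a ``digon'' circuit consisting of the two parallel edges $e', e''$ coming from a single non-loop edge $e$ of $\Gamma$, which projects to a single edge of $\Gamma$ traversed back and forth; and (b) a circuit that projects \emph{isomorphically} onto a genuine circuit $C$ of $\Gamma$, choosing for each edge $e \in E(C)$ one of its two lifts $e'$ or $e''$.

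\textbf{Key steps.} First I would verify the dichotomy above: if a circuit $\overline{C}$ of $\Gamma^d$ uses \emph{both} lifts $e'$ and $e''$ of some edge $e$ of $\Gamma$, then since $e', e''$ are parallel in $\Gamma^d$ they already form a circuit, so by minimality ($b_1(\overline C)=1$ and connectedness) $\overline C$ \emph{is} that digon — giving type (a), which requires $e$ to be a non-loop edge of $\Gamma$ (if $e$ were a loop, $e'$ and $e''$ are loops at the same vertex and $\{e',e''\}$ has $b_1=2$). Otherwise $\overline C$ uses at most one lift of each edge, hence projects injectively to a subgraph of $\Gamma$ which must itself be a circuit $C$ — type (b). Second, I would count type (a): these are in bijection with non-loop edges of $\Gamma$, i.e.~there are $|E(\Gamma)| - |\Loops(\Gamma)|$ of them, and each (being a digon, which is a loop-free circuit) admits exactly one totally cyclic orientation, which I must check is compatible with $\phi^d$; this is immediate since $\phi^d$ orients $e'$ and $e''$ oppositely, so going ``along $e'$ then back along $e''$'' respects $\phi^d$. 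Third, I would count type (b): for a fixed circuit $C$ of $\Gamma$ with $|E(C)| \geq 1$ edges, the lifts to $\Gamma^d$ are indexed by a choice of $e'$ or $e''$ for each edge, giving $2^{|E(C)|}$ subgraphs, but I only want those carrying an orientation compatible with $\phi^d$. Here is where I would use the fact that a circuit $C$ has exactly two totally cyclic orientations (the two ``directions'' around it), and for each direction, \emph{exactly one} choice of lift at each edge is compatible with $\phi^d$ — because $\phi^d(e') = \phi(e)'$ points one way and $\phi^d(e'') $ points the other way, so the direction of traversal determines whether to pick $e'$ or $e''$. Thus each circuit $C$ of $\Gamma$ contributes exactly $2$ compatibly oriented lifted circuits, giving $2|\Cir(\Gamma)|$ in total. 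Adding the two contributions yields $2|\Cir(\Gamma)| + |E(\Gamma)| - |\Loops(\Gamma)|$.

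\textbf{Main obstacle.} The routine part is the bijective bookkeeping; the subtle point I would be most careful about is the claim in type (b) that each of the two totally cyclic orientations of $C$ lifts to a \emph{unique} compatibly-oriented circuit in $\Gamma^d$, and conversely that every compatibly oriented type-(b) circuit arises this way. This requires checking that compatibility of an orientation of the lifted circuit $\overline C$ with $\phi^d$ forces, edge by edge, the choice between $e'$ and $e''$: if the chosen traversal direction of $\overline C$ at the edge over $e$ agrees with $\phi^d(e') = \phi(e)'$ we must have selected $e'$, and if it agrees with $\phi^d(e'') = \iota(\phi(e))''$ (the reverse direction) we must have selected $e''$; in either case the selection and the orientation are rigidly determined by the global direction of traversal of $C$. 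One must also confirm that a circuit in $\Gamma^d$ using exactly one lift per edge has an underlying graph that is genuinely a circuit of $\Gamma$ (connected, $b_1 = 1$, no separating edge) — this follows because the edge-projection restricted to $\overline C$ is a graph isomorphism onto its image, preserving all these properties. Finally, one should double-check the edge cases: a loop $e$ of $\Gamma$ is itself a circuit, and its two lifts $e'$, $e''$ are each loops of $\Gamma^d$; each is a (loop-)circuit of $\Gamma^d$ with its unique totally cyclic orientation, and exactly one of these two agrees with $\phi^d$ for each of the two orientations of the loop $e$ — wait, a loop has... here one must be careful: a loop, viewed as a circuit, has two orientations, and $e'$ carries one, $e''$ the other, so the count ``$2$ per circuit'' still holds, consistent with type (b). This confirms the loops of $\Gamma$ are correctly absorbed into the $2|\Cir(\Gamma)|$ term and correctly excluded from the digon count, matching the stated formula.
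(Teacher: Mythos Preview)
Your approach is correct and essentially identical to the paper's: the paper also separates off the digon circuits $\eta_e = \phi^d(e') + \phi^d(e'')$ for non-loop edges $e$, then observes that the remaining compatibly oriented circuits map $2\!:\!1$ onto $\Cir(\Gamma)$ via the natural map $\Gamma^d \to \Gamma$. One small slip: a digon actually has \emph{two} totally cyclic orientations, not one, but only one of them is compatible with $\phi^d$ (as you in effect check), so your count is unaffected.
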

\begin{proof}
For every $e\in E(\Gamma)\setminus \Loops(\Gamma)$, we set $\eta_e:=\phi^d(e')+\phi^d(e'')\in \Cir_{\phi^d}(\Gamma^d)$
where $e'$ and $e''$ are the two edges of $\Gamma^d$ corresponding to $e\in \Gamma$.
By taking the image of a circuit of $\Gamma^d$ under the natural contraction map $\Gamma^d\to \Gamma$, we get a well-defined map
$$\Cir_{\phi^d}(\Gamma^d)\setminus \{\eta_e \: :\:e \in E(\Gamma)\setminus \Loops(\Gamma)\}\to \Cir(\Gamma).$$
Since any circuit of $\Gamma$ can be lifted in exactly two ways to an oriented circuit of $\Gamma^d$ compatibly oriented with $\phi^d$, the above map is surjective and $2\!:\!1$. This concludes the proof.
\end{proof}

\begin{rem} In terms of Theorem  \ref{T:exp-pres}, Proposition \ref{protan} reflects the fact that $D(\Gamma)$ is generated by $2 |\Cir(\Gamma)|$ ``$X$'' variables and $|E(\Gamma)|$ ``$T$'' variables, and has $|\Loops(\Gamma)|$ relations involving linear terms.
\end{rem}

We now consider the multiplicity of $X_\Gamma$ at $\underline 0$.
To that aim, we need to recall some definitions. Let $ H_{\mathbb Z}$ be a lattice and let $\sigma$ be  a strongly convex rational polyhedral cone in $H_{\mathbb R}= H_{\mathbb Z}\otimes \mathbb R$.   Set $C(\sigma):=\sigma\cap H_{\mathbb Z}$, $H_{\bbR,\sigma}=\langle \sigma\rangle \subseteq H_{\bbR}$ to be the linear span of $\sigma$ in $H_{\mathbb R}$, and
$H_{\bbz,\sigma}:=\langle \sigma \rangle \cap H_{\bbz}$;
note $H_{\mathbb R,\sigma}=H_{\mathbb Z,\sigma}\otimes \mathbb R$.
We denote by $\operatorname{vol}_{C(\sigma)}$ the unique translation-invariant measure on
$H_{\mathbb R,\sigma}$ such that the volume of a standard unimodular simplex $\Delta$ is $1$ (i.e.~$\Delta$ is the convex hull of a basis of $H_{\bbz,\sigma}$ together
with  $0$). Following \cite[p.184]{GKZ}, denote by $K_+(C(\sigma))\subseteq H_{\mathbb R,\sigma}$ the convex hull of the set
$C(\sigma)\setminus \{0\}$ and by $K_-(C(\sigma))$ the closure of $\sigma\setminus K_+(C(\sigma))$. The set $K_-(C(\sigma))$ is a bounded (possibly not convex)
lattice polyhedron in $H_{\bbR,\sigma}$ which is called the {\em subdiagram part} of $C(\sigma)$.

\begin{defi}\cite[Ch.~5, Def.~3.8]{GKZ}
The {\em subdiagram volume}  of $C(\sigma)$ is the natural number
$$u(C(\sigma)):=\operatorname{vol}_{C(\sigma)}(K_-(C(\sigma))).$$
\end{defi}

Now let  $R(\sigma)=R(C(\sigma))$ be the semigroup ring associated to $C(\sigma)$.  Let  $\m$ be the maximal ideal generated by the generators of the $k$-algebra $R(\sigma)$.  Let $\underline 0$ be the corresponding point in $X_\sigma:=\operatorname{Spec}R(\sigma)$.  The multiplicity of $X_\sigma$ at $\underline 0$ is given by (see e.g.~\cite[Ch.~5, Thm.~3.14]{GKZ})
$$
\operatorname{mult}_{\underline 0}X_\sigma=u(C(\sigma)).
$$

\begin{teo}\label{T:multiGKZ}
Let $\Gamma$ be a graph, and let $\sigma=\sigma_\Gamma$.
$$\operatorname{mult}_{\underline 0}X_\Gamma=u(C(\sigma)).$$
\end{teo}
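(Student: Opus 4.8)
The plan is to identify the ring $U(\Gamma)$ with the semigroup ring $R(\sigma_\Gamma)$ appearing in the setup preceding the statement, and then quote the multiplicity formula $\operatorname{mult}_{\underline 0}X_\sigma = u(C(\sigma))$ from \cite[Ch.~5, Thm.~3.14]{GKZ}. The only subtlety is matching conventions: in \S\ref{S:cogtoric} the cographic toric variety $X_\Gamma = \Spec U(\Gamma)$ is the toric variety attached to the fan generated by $\sigma_\Gamma^\vee$ and its faces, so $U(\Gamma) = k[C(\Gamma)]$ with $C(\Gamma) = \bbH_1(\Gamma,\bbZ)\cap \sigma_\Gamma$, whereas the general discussion of subdiagram volumes takes a strongly convex cone $\sigma$ in a lattice $H_\bbZ$ and forms $R(\sigma) = k[C(\sigma)]$ with $C(\sigma) = \sigma\cap H_\bbZ$. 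So the point is simply that $\sigma_\Gamma$ is itself the right cone to feed into the GKZ machine: one takes $H_\bbZ = \bbM_\Gamma = \bbH_1(\Gamma,\bbZ)$ and $\sigma = \sigma_\Gamma \subset \bbM_\Gamma\otimes\bbR$, so that $C(\sigma) = C(\Gamma)$ and $R(\sigma) = U(\Gamma)$, $X_\sigma = X_\Gamma$.

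First I would note that $\sigma_\Gamma$ is a pointed (strongly convex) full-dimensional rational polyhedral cone: this is the content of Lemma \ref{L:prop-cone}\eqref{L:prop-cone1} applied to $\sigma_\Gamma^\vee$, together with the fact that the dual of a pointed full-dimensional cone is again pointed and full-dimensional, or alternatively it follows directly from the identification $\sigma_\Gamma \cong \sigma_{\Gamma^d}(\phi^d)$ in the proof of Proposition \ref{procan} and \cite[Prop.~3.1]{CMKVa}. Hence the hypotheses needed to define $u(C(\sigma_\Gamma))$ — a lattice $H_\bbZ$ and a strongly convex rational polyhedral cone $\sigma$ in it — are satisfied with $H_\bbZ = \bbM_\Gamma$ and $\sigma = \sigma_\Gamma$. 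Next I would observe that by \eqref{E:smgrC} we have $C(\Gamma) = \bbH_1(\Gamma,\bbZ)\cap\sigma_\Gamma = C(\sigma_\Gamma)$ on the nose, so $U(\Gamma) = k[C(\Gamma)] = k[C(\sigma_\Gamma)] = R(\sigma_\Gamma)$ and therefore $X_\Gamma = \Spec U(\Gamma) = \Spec R(\sigma_\Gamma) = X_{\sigma_\Gamma}$, and the distinguished point $\underline 0 \in X_\Gamma$ (the unique torus fixed point, corresponding to the maximal ideal $\m$ generated by the monomial generators of $U(\Gamma)$) coincides with the point $\underline 0 \in X_{\sigma_\Gamma}$ in the GKZ setup, since in both cases $\m$ is the ideal generated by the $k$-algebra generators corresponding to nonzero semigroup elements.

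With these identifications in place, the multiplicity formula recalled just before the statement — namely $\operatorname{mult}_{\underline 0}X_\sigma = u(C(\sigma))$ from \cite[Ch.~5, Thm.~3.14]{GKZ} — applies verbatim with $\sigma = \sigma_\Gamma$, yielding $\operatorname{mult}_{\underline 0}X_\Gamma = u(C(\sigma_\Gamma)) = u(C(\Gamma))$, which is exactly the claim. I do not expect any real obstacle here: the proof is essentially a bookkeeping argument confirming that the cographic toric variety is a special instance of the class of affine toric varieties for which the GKZ subdiagram-volume formula for multiplicity holds. The one thing worth spelling out carefully, to avoid a convention mismatch, is that the cone to use is $\sigma_\Gamma$ (living in $\bbM_\Gamma\otimes\bbR$) rather than its dual $\sigma_\Gamma^\vee$: it is $\sigma_\Gamma$, not $\sigma_\Gamma^\vee$, whose lattice points form the semigroup $C(\Gamma)$ defining $U(\Gamma)$, while $\sigma_\Gamma^\vee$ is merely the cone of the fan realizing $X_\Gamma$ as a toric variety in the usual orbit-cone dictionary. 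Making this explicit, the theorem follows immediately.
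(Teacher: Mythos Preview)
Your proposal is correct and matches the paper's approach exactly: the paper gives no separate proof for this theorem, treating it as an immediate specialization of the general GKZ multiplicity formula recalled just beforehand with $\sigma=\sigma_\Gamma$ and $H_\bbZ=\bbM_\Gamma$. Your careful verification that $\sigma_\Gamma$ is pointed and full-dimensional and that the maximal ideals coincide is more than the paper bothers to spell out, but it is the right bookkeeping and entirely in the same spirit.
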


\begin{rem}
It would be interesting to have a formula for $\operatorname{mult}_{\un 0}X_\Gamma$ in terms of standard invariants of the graph $\Gamma$ (or $\Gamma^d$).
\end{rem}

\section{The cographic toric ring $U(\Gamma)$ as a ring of invariants}\label{S:U-inv}

In this section, we show that the cographic toric ring $U(\Gamma)$ appears as ring of invariants of a torus acting on a certain polynomial ring.
Indeed, this invariant ring will appear in Section \ref{S:univJac} in the description of the completed local rings of the universal compactified Jacobian.

Consider the action of the algebraic torus $T_\Gamma:=\prod_{v\in V(\Gamma)}\mathbb G_m$ on the polynomial ring
$$ B(\Gamma) := k[X_{\er}: \er \in \OE]
$$
given by the rule that $\lambda=(\lambda_v)_{v\in V(\Gamma)}\in T_\Gamma$ acts as
\begin{equation}\label{E:act-coord}
\lambda \cdot X_{\er}=\lambda_{s(\er)} \lambda_{t(\er)}^{-1} X_{\er}.
\end{equation}

\noindent In order to more easily connect the results of this paper to those in \cite{CMKVb}, we note the following.

\begin{rem}\label{R:B-action}
The ring $B(\Gamma)$ is isomorphic to
$$B(\Gamma)\cong \frac{k[X_e,Y_e,T_e: e\in E(\Gamma)]}{(X_eY_e-T_e)}$$
and its  completion at the maximal ideal $(X_e,Y_e)$ is isomorphic to the ring
denoted $\widehat{B(\Gamma)}$ in \cite[Thm.~A]{CMKVb}.
Under this isomorphism, the action of $T_\Gamma$ on $B(\Gamma)$ given above, induces the same action of $T_\Gamma$  on $\widehat{B(\Gamma)}$ given in  \cite[Thm.~A]{CMKVb}.
\end{rem}

\begin{teo}\label{T:pres-quot}
The cographic toric ring $U(\Gamma)$ is isomorphic to the subring $B(\Gamma)^{T_{\Gamma}}\subset B(\Gamma)$ of $T_{\Gamma}$-invariants on $B(\Gamma)$.
\end{teo}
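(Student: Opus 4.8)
The plan is to identify $B(\Gamma)^{T_\Gamma}$ explicitly as an affine semigroup ring and match it with $C(\Gamma)$. First I would observe that $B(\Gamma)=k[X_{\er}:\er\in\OE]$ is the semigroup algebra $k[\bbZ_{\geq 0}^{\OE}]$, and the $T_\Gamma$-action \eqref{E:act-coord} is the diagonal torus action associated to the grading of $\bbZ^{\OE}$ by the character lattice $X^*(T_\Gamma)=\bbZ^{V(\Gamma)}$ sending the basis vector $e_{\er}\mapsto e_{s(\er)}-e_{t(\er)}$. This grading map is exactly (the negative of) the boundary map $\bbD:\bbC_1(\Gamma,\bbZ)\to\bbC_0(\Gamma,\bbZ)$ from \eqref{eqnCCcompS}, restricted to the monomial lattice. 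Therefore a monomial $\prod_{\er}X_{\er}^{a_{\er}}$ is $T_\Gamma$-invariant if and only if $\sum_{\er}a_{\er}\,\bbD(\er)=0$, i.e. the exponent vector $(a_{\er})\in\bbZ_{\geq 0}^{\OE}$ lies in $\ker\bbD=\bbH_1(\Gamma,\bbZ)$. Since the invariants of a torus acting diagonally on a polynomial ring are spanned by the invariant monomials, this gives
$$
B(\Gamma)^{T_\Gamma}=k\bigl[\,\bbZ_{\geq 0}^{\OE}\cap\bbH_1(\Gamma,\bbZ)\,\bigr].
$$

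Next I would recognize the semigroup on the right as $C(\Gamma)$. By definition \eqref{E:cone+}, $\sigma_\Gamma=\bigcap_{\er\in\OE}\{(\,\cdot\,,\er)\geq 0\}\subset\bbM_\Gamma\otimes\bbR$, where $(\,\cdot\,,\er)$ is the coordinate functional coming from the orthonormal basis $\OE$ of $\bbC_1(\Gamma,\bbR)$; so for $z\in\bbH_1(\Gamma,\bbZ)=\bbM_\Gamma$, the condition $z\in\sigma_\Gamma$ is precisely that all coordinates of $z$ in the $\OE$-basis are nonnegative, i.e. $z\in\bbZ_{\geq 0}^{\OE}$. Hence $C(\Gamma)=\bbH_1(\Gamma,\bbZ)\cap\sigma_\Gamma=\bbZ_{\geq 0}^{\OE}\cap\bbH_1(\Gamma,\bbZ)$, and therefore $B(\Gamma)^{T_\Gamma}=k[C(\Gamma)]=U(\Gamma)$ by Definition \ref{D:cogringvar}. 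This matches up the basis elements $X^c\leftrightarrow\prod_{\er}X_{\er}^{c_{\er}}$ and is clearly compatible with multiplication.

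The only genuine step requiring care — and the main obstacle — is justifying that $B(\Gamma)^{T_\Gamma}$ is spanned as a $k$-vector space by invariant monomials, so that it really is the semigroup algebra of the subsemigroup of invariant exponent vectors. This is the standard fact that for a diagonalizable group acting linearly and diagonally on a polynomial ring, the ring of invariants is the span of the invariant monomials (each monomial is a weight vector, weight spaces are spanned by monomials, and the invariants are the zero-weight space). I would cite or briefly recall this; it is elementary over an algebraically closed field in any characteristic since $\Gm$ is linearly reductive. Alternatively, one can phrase the whole argument as: $B(\Gamma)^{T_\Gamma}$ corresponds to the categorical quotient $\Spec B(\Gamma)/\!/T_\Gamma = \bbA^{\OE}/\!/T_\Gamma$, which for a torus acting on affine space is the toric variety with semigroup $\bbZ_{\geq 0}^{\OE}\cap\ker(\bbZ^{\OE}\to X^*(T_\Gamma))$, and again this equals $C(\Gamma)$. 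Everything else is a direct unwinding of definitions, so no further difficulty is anticipated.
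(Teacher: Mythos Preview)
Your proof is correct, and it takes a cleaner, more direct route than the paper's. The paper argues via the explicit presentation $D(\Gamma)$ of Theorem~\ref{T:exp-pres}: it rewrites $B(\Gamma)$ as $k[X_e,Y_e,T_e]/(X_eY_e-T_e)$, puts invariant monomials in the normal form $\prod X_{\er}^{a_e}T_e^{b_e}$, checks that invariance forces $\sum a_e\er\in H_1(\Gamma,\bbZ)$ (ordinary homology), and then matches the multiplication with the map $\psi$ defining $D(\Gamma)$---essentially replaying the argument of \cite[Thm.~6.1]{CMKVa}. You instead stay with the oriented homology $\bbH_1(\Gamma,\bbZ)$ and the original definition $U(\Gamma)=k[C(\Gamma)]$: the weight of $X_{\er}$ under $T_\Gamma$ is $-\bbD(\er)$, so the invariant monomials are exactly those with exponent vector in $\ker\bbD\cap\bbZ_{\geq 0}^{\OE}$, and since the orthonormality of $\OE$ in $\bbC_1(\Gamma,\bbR)$ makes $\sigma_\Gamma$ the first orthant intersected with $\bbH_1(\Gamma,\bbR)$, this semigroup is $C(\Gamma)$ on the nose.

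What each approach buys: the paper's route reinforces the link to $D(\Gamma)$ and to the cographic toric face ring $R(\Gamma)$ treated in \cite{CMKVa}, which is useful for the deformation-theoretic applications later. Your route is self-contained, avoids Theorem~\ref{T:exp-pres} and the auxiliary map $\psi$ entirely, and makes transparent that $X_\Gamma=\Spec B(\Gamma)^{T_\Gamma}$ is simply the affine GIT quotient $\bbA^{\OE}/\!\!/T_\Gamma$ with its standard toric description. Your remark that the only point needing justification is that torus invariants on a polynomial ring are spanned by invariant monomials is well placed; this is indeed standard (linear reductivity of $\Gm$, weight-space decomposition) and holds in arbitrary characteristic.
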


\begin{proof}
Using Theorem \ref{T:exp-pres}, we are going to show that $B(\Gamma)^{T_{\Gamma}}$ is isomorphic to the $k$-algebra $D(\Gamma)$.
The proof is essentially identical to the proof of \cite[Thm.~6.1]{CMKVa}; we first show that the underlying $k$-vector spaces agree, and then we show that the multiplication rules agree.
In keeping with the notation of the proof of  \cite[Thm.~6.1]{CMKVa}, we first observe (as in Remark \ref{R:B-action}) that $B(\Gamma)$ can be identified with $$k[X_{\el},X_{\er},T_e : e \in E(\Gamma)]/(X_{\el}X_{\er}-T_e).$$  The key point is then to identify the invariant monomials in this ring.   This is made easier by the observation that every monomial has an expression of the form
$$
\prod _{e\in E(\Gamma)} X_{\er}^{a_ e}T_e^{b_e}
$$
with $a_e,b_e\in \mathbb Z_{\ge 0}$, where for each $e\in E(\Gamma)$ we have that $\er$ is one of the two orientations of $e$.  The expression is unique up to replacing $\er$ with $\el$ for those $e$ such that $a_e=0$.
  The same direct analysis of the action  as in the proof of  \cite[Thm.~6.1]{CMKVa} shows that in order for this monomial to be invariant, $\sum_{e\in E(\Gamma)}a_e\er\in H_1(\Gamma,\mathbb Z)$.    Thus as $k$-vector spaces, $D(\Gamma)$ and $B(\Gamma)^{T_\Gamma}$ agree.  It remains to check that multiplication agrees.  This can be checked at the level of monomials, and $\psi$ in the definition of $D(\Gamma)$ (see Remark \ref{remPsiDef}) was constructed exactly to make these agree.
\end{proof}

The cographic toric ring $U(\Gamma)$ is related to the cographic toric face ring $R(\Gamma)$ studied in \cite{CMKVa}, as explained in the following remark (see also Remark \ref{R:defcog1}).

\begin{rem}\label{R:A-action}
The action of $T_\Gamma$ on $B(\Gamma)$ defines an action on the quotient
$$A(\Gamma):=\frac{B(\Gamma)}{(X_{\er}X_{\el}:e\in E(\Gamma))}\cong \frac{k[X_e,Y_e: e\in E(\Gamma)]}{(X_eY_e)},$$
which coincides with the action of $T_{\Gamma}$ defined in \cite[Thm.~A]{CMKVa}.
Therefore, the natural surjection $B(\Gamma) \twoheadrightarrow A(\Gamma)$ induces, by taking $T_{\Gamma}$-invariants, a map
\begin{equation}\label{eqncogquot}
U(\Gamma)\longrightarrow R(\Gamma)
\end{equation}
where $R(\Gamma):=A(\Gamma)^{T_{\Gamma}}$ is the cographic toric face ring of $\Gamma$ (see \cite[Thm.~6.1]{CMKVa}).
Indeed, the morphism \eqref{eqncogquot} coincides with the morphism \eqref{E:quotmor} and in particular it is surjective.
\end{rem}

\section{Examples}\label{S:exa}

We now include a few examples of cographic toric rings.

\subsection{The $n$-cycle $C_n$}

Let $C_n$ be the $n$-cycle graph, i.e.~the graph formed by $n$ vertices connected by a closed chain of $n$ edges, as depicted in Figure \ref{F:cycle}.

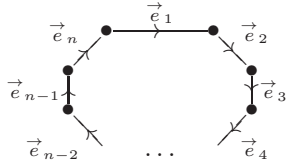
\begin{figure}[ht]
\begin{equation*}
\xymatrix@=.7pc{
& *{\bullet}  \ar@{-}[dl]|-{\SelectTips{cm}{}\object@{<}}_{\er_{n}} \ar@{-}[rr]|-{\SelectTips{cm}{}\object@{>}}^{\er_{1}}& & *{\bullet} \ar@{-}[dr]|-{\SelectTips{cm}{}\object@{>}}^{\er_{2}} & \\
 *{\bullet} \ar@{-}[d]|-{\SelectTips{cm}{}\object@{<}}_{\er_{n-1}}&& &&*{\bullet}\ar@{-}[d]|-{\SelectTips{cm}{}\object@{>}}^{\er_{3}} \\
  *{\bullet} \ar@{-}[dr]|-{\SelectTips{cm}{}\object@{<}}_{\er_{n-2}} && &&*{\bullet} \ar@{-}[dl]|-{\SelectTips{cm}{}\object@{>}}^{\er_4} \\
  && \ldots &&
}
\end{equation*}
\caption{The $n$-cycle $C_n$ with half of its oriented edges.}\label{F:cycle}
\end{figure}

The cographic toric ring of $C_n$ admits the following explicit presentation
$$
U(C_n)=\frac{k[X,Y,T_1,\ldots,T_n]}{(XY-T_1\ldots T_n)}.
$$
To see this, consider the explicit presentation of the cographic toric ring given in \S\ref{S:X-without}.
Note that the are two oriented circuits of $C_n$ giving rise to the elements $c:=[\er_1]+\cdots+[\er_n]$ and $-c$ of $H_1(\Gamma,\bbz)$.
Then, using Proposition \ref{protan}, we get that the generators of the ring $U(C_n)$ are $X=X^c$, $Y=X^{-c}$ and $T_i=T_{e_i}$ for $1\leq i\leq n$.
Since the function $\psi$ of \eqref{E:psi} on $H_1(C_n,\bbz)$ is such that $\psi(c,-c)=e_1+\cdots e_n$, we get the relation $XY-T_1\ldots T_n$.

It is easily checked that the cographic toric variety $X_{C_n}=\Spec U(C_n)$ satisfies
$$\begin{sis}
& \dim X_{C_n}=n+1,\\
& \dim T_{\un 0}(X_{C_n})=n+2, \\
& \operatorname{mult}_{\underline 0}X_{C_n}=2,
\end{sis} $$
which is of course in agreement with the formulas obtained in \S\ref{S:sing}.

\subsection{The $n$-thick edge $I_n$}

Let $I_n$ be the $n$-th thick edge graph, i.e.~the graph formed by two vertices joined by $n$ edges, as depicted in Figure \ref{F:thick}.
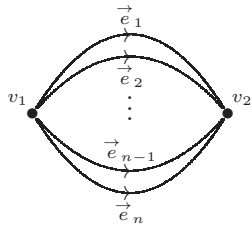
\begin{figure}[ht]
\begin{equation*}
\xymatrix@=.5pc{
 *{\bullet} \ar @{-}@/_1.8pc/[rrrrrr]|-{\SelectTips{cm}{}\object@{>}}^{\er_{n-1}} \ar @{-} @/_2.5pc/[rrrrrr] |-{\SelectTips{cm}{}\object@{>}}_{\er_n}
 \ar@{-} @/^1.8pc/[rrrrrr]|-{\SelectTips{cm}{}\object@{>}}_{\er_2}  \ar@{-}@/^2.5pc/[rrrrrr]|-{\SelectTips{cm}{}\object@{>}}^{\er_1}^<{v_1}^>{v_2}  &&& \vdots &&&*{\bullet} \\
}
\end{equation*}
\caption{The $n$-thick edge $I_n$ with half of its oriented edges.}\label{F:thick}
\end{figure}
\noindent The cographic toric ring of $I_n$ admits the following explicit presentation
$$
U(I_n)= \frac{k[X_{ij}, T_k]_{1\leq i\neq j\leq n, 1\leq k\leq n}}{(X_{ij}X_{ji}-T_iT_j, X_{ij}X_{jk}-T_j X_{ik})}.
$$
To see this, consider the explicit presentation of the cographic toric ring given in \S\ref{S:X-without}.
Note that the oriented circuits of $I_n$ gives rise to the elements  $\gamma_{ij}:=[\er_i]-[\er_j]\in H_1(I_n,\bbz)$ for any $1\leq i< j\leq n$. Then, using Proposition \ref{protan}, we deduce that the generators of the ring $U(I_n)$ are $X_{ij}=X^{\gamma_{ij}}$ for $1\leq i\neq j\leq n$ and $T_k=T_{e_k}$ for $1\leq k\leq n$.
Since the only non-zero values of the function $\psi$ of \eqref{E:psi} on the oriented circuits $\gamma_{ij}$ of $H_1(I_n,\bbz)$ are given by
$$\begin{sis}
&\psi(\gamma_{ij},\gamma_{ji})=e_i+e_j, \\
&\psi(\gamma_{ij},\gamma_{jk})=e_j, \\
\end{sis}$$
we get the desired relations among the given generators.

It is easily checked that the cographic toric variety $X_{I_n}=\Spec U(I_n)$ satisfies
$$\begin{sis}
& \dim X_{I_n}=2n-1,\\
& \dim T_{\un 0}(X_{I_n})=n^2, \\
& \operatorname{mult}_{\underline 0}X_{I_n}=\binom{2(n-1)}{n-1},
\end{sis} $$
which is of course in agreement with the formulas obtained in \S\ref{S:sing}.

\section{The universal compactified Jacobian}\label{S:univJac}

The aim of this section is to apply the results of the previous sections in order to study the singularities
of the universal compactified Jacobian $\bar J_{d,g}$ and eventually prove in Theorem \ref{T:can-sing} that
$\bar J_{d,g}$ has canonical singularities over a base field $k$ of ${\rm char}(k)=0$, at least if $g\geq 4$.
We then deduce some consequences for the birational geometry of the universal Jacobians $J_{d,g}$.  The outline of this section is as follows.  In \S \ref{SS:locrings}, we relate the local rings of the universal compactified Jacobian to the rings appearing earlier in this paper.  The culmination is Theorem \ref{T:loc-ring}, which essentially reduces the problem to studying finite quotients of the cographic rings $U(\Gamma)$.  In order to describe this quotient, it is convenient to compare with an associated quotient obtained from the local structure of $\overline M_g$;  this comparison is made in \S \ref{SS:loc-mor}, culminating in Theorem \ref{T:loc-mor}.   In \S \ref{SS:sing-Jac}, we give the proof of   Theorem \ref{T:can-sing}.   The argument relies on a generalization  of the Reid--Tai--Shepherd-Barron criterion  to singular toric varieties, which can be found in Appendix \ref{S:RT-toric}.   Consequences for the birational geometry of $\bar J_{d,g}$ are given in \S \ref{SS:bir-Jac}.

\subsection{The local rings of $\bar J_{d,g}$}\label{SS:locrings}

In this subsection, which is heavily based on our previous work \cite{CMKVb}, we obtain an explicit description of  the completed local rings of $\bar J_{d,g}$ in terms of the cographic toric rings studied in the previous sections.

Fix a point $(C,I)\in \bar J_{d,g}$; i.e.~$C$ is a stable curve of genus $g$, and $I$ is a rank $1$, torsion-free sheaf of degree $d$ on $C$, which is poly-stable with respect to the canonical polarization $\omega_C$.
Let $\Sigma_{(C,I)}$ (or simply $\Sigma$ when the pair $(C,I)$ we are dealing with is clear from the context) be the set of nodes of $C$ where $I$ is \emph{not} locally free.  Let $\Gamma_{(C,I)}$ (or simply $\Gamma$ when the pair
$(C,I)$ we are dealing with is clear from the context) be the graph obtained from
the dual graph of $C$ by contracting the edges corresponding to the nodes that are not in $\Sigma_{(C,I)}$.
In particular, the edges of $\Gamma_{(C,I)}$ correspond naturally to the nodes in $\Sigma_{(C,I)}$.  Note that $\Gamma_{(C,I)}$ is the dual graph of the curve obtained from $C$ by smoothing the nodes at which $I$ is locally free. For convenience, we fix an arbitrary orientation of $\Gamma_{(C,I)}$ and we denote by $s,t:E(\Gamma_{(C,I)})\to V(\Gamma_{(C,I)})$ the source and target maps, associating to any edge of $\Gamma_{(C,I)}$ the source and target with respect to the chosen orientation.

We now  review the deformation theory of the pair $(C,I)$, referring to \cite{CMKVb} for more details and  proofs.
As explained in \cite[\S 3]{CMKVb}, the \emph{deformation functor} $\Def_{(C,I)}$ of the pair $(C,I)$ fits into the following sequence
\begin{equation}\label{E:seq-def}
\Def_{(C,I)}^{\rm l.t.}\longrightarrow \Def_{(C,I)} \stackrel{F}{\longrightarrow} \prod_{e\in \Sigma}\Def_{(C_e,I_e)}=\Def_{(C,I)}^{\rm loc},
\end{equation}
where $\Def_{(C_e,I_e)}$ is the deformation functor of the pair consisting of $C_e:=\Spec \wh{\mathscr O}_{C,e}$ and the pull-back $I_e$ of $I$ to $C_e$,  $F$ is the forgetful map
mapping taking deformations of $(C,I)$ to local deformations at the set of nodes $e\in \Sigma$ where $I$ fails to be locally free, and $\Def_{(C,I)}^{\rm l.t.}$ is the subfunctor of $\Def_{(C,I)}$ parametrizing locally trivial
deformations, i.e.~deformations of $(C,I)$ that map to the trivial deformation via the forgetful map $F$.
The above three deformation functors are unobstructed and the forgetful map $F$ is  formally smooth (see \cite[\S 3]{CMKVb}). In particular, we get an exact sequence of tangent spaces
\begin{equation}\label{E:seq-tan}
0\to T \Def_{(C,I)}^{\rm l.t.}\longrightarrow T \Def_{(C,I)}\longrightarrow  T\Def_{(C,I)}^{\rm loc}\to 0.
\end{equation}
Define the following $k$-algebra
\begin{equation}\label{E:ringR}
R_{(C,I)}:=k[T^{\vee}\Def_{(C,I)}]=\bigoplus_{n\in \bbN} \Sym^n T^{\vee}\Def_{(C,I)},
\end{equation}
where $T^{\vee}\Def_{(C,I)}$ is the dual of the tangent space $T\Def_{(C,I)}$. Fixing a splitting of the exact sequence \eqref{E:seq-tan} and using the explicit description of a miniversal deformation ring
for $\Def_{(X_e,I_e)}$ obtained in \cite[Lemma 3.14]{CMKVb}, we can write $R_{(C,I)}$ in the following form
\begin{equation}\label{E:Rdec}
R_{(C,I)}=k[T^{\vee}\Def_{(C,I)}^{\rm loc}]\otimes_k k[T^{\vee}\Def^{\rm l.t.}_{(C,I)}]=\bigotimes_{e\in \Sigma} \frac{k[X_e,Y_e,T_e]}{(X_eY_e-T_e)}  \otimes_k k[T^{\vee}\Def^{\rm l.t.}_{(C,I)}]=
B(\Gamma)\otimes_k k[T^{\vee}\Def^{\rm l.t.}_{(C,I)}],
\end{equation}
where $B(\Gamma)$ is the ring defined in  Remark \ref{R:B-action}. As proved in \cite[\S 3.2]{CMKVb}, the \emph{mini-versal deformation ring} of the functor $\Def_{(C,I)}$ is given by the completion $\widehat R_{(C,I)}$ of $R_{(C,I)}$  at the maximal ideal $\m_0$ generated by $T^{\vee}\Def_{(C,I)}$.
Geometrically,   the variables $X_e$ and $Y_e$ correspond to the deformations of $I$ at the node $e\in E(\Gamma)=\Sigma$ and the variable $T_e$ corresponds to the smoothing of $C$ at $e$.
Note also,  the completion $\widehat B(\Gamma)$ of $B(\Gamma)$ at the maximal ideal generated by $T^{\vee}\Def_{(C,I)}^{\operatorname{loc}}$ was shown to be mini-versal for $\Def_{(C,I)}^{\rm loc}$; for this reason we will sometimes also write
$$
R_{(C,I)}^{\operatorname{loc}}:=B(\Gamma)=\bigotimes_{e\in \Sigma} \frac{k[X_e,Y_e,T_e]}{(X_eY_e-T_e)}.
$$

Consider now the \emph{automorphism group} $\Aut(C,I)$ of $(C,I)$,  consisting of all the pairs $(\sigma, \tau)$ such that $\sigma:C \stackrel{\cong}{\longrightarrow} C$ is an automorphism of $C$ and
$\tau: I\stackrel{\cong}{\longrightarrow} \sigma^*(I)$ is an isomorphism of sheaves on $C$. We have a natural exact sequence of groups
\begin{equation}\label{E:seq-groups}
\{1\}\to \operatorname{Aut}(I)  \stackrel{i}{\longrightarrow}  \operatorname{Aut}(C,I) \stackrel{p}{\longrightarrow}  {\rm Stab}_C(I) \longrightarrow \{1\},
\end{equation}
where ${\rm Stab}_C(I)\subseteq \operatorname{Aut}(C)$  the subgroup of $\Aut(C)$ (which is finite since $C$ is stable) consisting of all the elements $\sigma\in \Aut(C)$ such that $\sigma^*(I)\cong I$.
The group $\Aut(I)$ is an algebraic torus, which by \cite[Remark. 5.9]{CMKVb} is naturally isomorphic to
\begin{equation}\label{E:group2}
\Aut(I)= T_{\Gamma}:=\prod_{v \in V(\Gamma)} \Gm.
\end{equation}

The automorphism group $\Aut(C,I)$ acts naturally on $\Def_{(C,I)}$ (see \cite[Def. 3.4]{CMKVb}); hence it acts also on the tangent space $T\Def_{(C,I)}$ and this action clearly preserves the exact
sequence \eqref{E:seq-tan}. Therefore we get a natural linear \emph{action} of $\Aut(C,I)$ on $R_{(C,I)}$ which preserves the decomposition of $R_{(C,I)}$ given in \eqref{E:Rdec}.
It follows from \cite[\S5]{CMKVb} that the induced action of the subgroup $\Aut(I)$ is trivial on $k[T^{\vee} \Def_{(C,I)}^{\rm l.t.}]$, and  coincides with the action of $T_{\Gamma}$ on $B(\Gamma)$ given
by \eqref{E:act-coord} after the identification of Remark \ref{R:B-action}. Explicitly, an element $\lambda=(\lambda_v)_{v\in V(\Gamma)}\in T_{\Gamma}$ acts on the generators of $B(\Gamma)$ as
\begin{equation}\label{E:expl-act}
\lambda\cdot X_e=\lambda_{s(e)}\lambda_{t(e)}^{-1} X_e,  \hspace{1cm} \lambda\cdot Y_e=\lambda_{s(e)}^{-1}\lambda_{t(e)} Y_e \hspace{0.7cm} \text{ and }Ê\hspace{0.7cm} \lambda\cdot T_e=T_e.
\end{equation}

The subring  $R_{(C,I)}^{\Aut(C,I)}\subseteq R_{(C,I)}$ of invariants for the action of $\Aut(C,I)$ on $R_{(C,I)}$ can be computed in two steps: we first take the subring $R_{(C,I)}^{\Aut(I)}\subset R_{(C,I)}$
of invariants for the subgroup $\Aut(I)$; then we take the invariants for the induced action of the finite group $\Stab_C(I)$ on $R_{(C,I)}^{\Aut(I)}$. Using Theorem \ref{T:pres-quot}, the ring of invariants with respect to $\Aut(I)$ is equal to
\begin{equation}\label{E:par-inv}
R_{(C,I)}^{\Aut(I)}=U(\Gamma)\otimes_k k[T^{\vee}\Def_{(C,I)}^{\rm l.t.}],
\end{equation}
where $U(\Gamma)$ is the cographic toric ring associated to $\Gamma$. Therefore the subring of invariants with respect to $\Aut(C,I)$ is given by
\begin{equation}\label{E:tot-inv}
R_{(C,I)}^{\Aut(C,I)}=\left(R_{(C,I)}^{\Aut(I)}\right)^{\Stab_C(I)}=\left(U(\Gamma)\otimes_k k[T^{\vee}\Def_{(C,I)}^{\rm l.t.}] \right)^{\Stab_C(I)}.
\end{equation}

We show next that the completion of the invariant subring  \eqref{E:tot-inv} at the maximal  ideal $\m_0\cap R_{(C,I)}^{\Aut(C,I)}$ gives a description of the completed local ring $\wh{\OO}_{\bar J_{d,g},(C,I)}$ of the universal  compactified Jacobian $\bar J_{d,g}$ at $(C,I)$.

\begin{teo}\label{T:loc-ring}
Notation as above. Assume that $\operatorname{Stab}_C(I)$ does not contain elements of order equal to $p={\rm char}(k)$.
The completion of the invariant subring $R_{(C,I)}^{\Aut(C,I)}$ at the  maximal ideal $\m_0\cap R_{(C,I)}^{\Aut(C,I)}$ is isomorphic to the completed local ring $\wh{\OO}_{\bar J_{d,g},(C,I)}$ of
the universal compactified Jacobian $\bar J_{d,g}$ at $(C,I)$.
\end{teo}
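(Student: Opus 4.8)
The plan is to combine two inputs: (1) the deformation-theoretic description of $\bar J_{d,g}$ from \cite{CMKVb}, which identifies the complete local ring of $\bar J_{d,g}$ at $(C,I)$ with the invariants, under $\Aut(C,I)$, of the complete versal deformation ring $\wh R_{(C,I)}$; and (2) the algebraic identifications of invariant subrings obtained earlier in this section, namely \eqref{E:par-inv}, \eqref{E:tot-inv}, and Theorem \ref{T:pres-quot}. The point is simply that everything lines up after completion.

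First I would recall (citing \cite[\S 3--5]{CMKVb}) that $\bar J_{d,g}$ is constructed as a GIT quotient, and that by Luna's étale slice theorem — more precisely, the version adapted to this setting in \cite{CMKVb} — the complete local ring $\wh{\OO}_{\bar J_{d,g},(C,I)}$ is the ring of invariants $(\wh R_{(C,I)})^{\Aut(C,I)}$, where $\wh R_{(C,I)}$ is the completion of $R_{(C,I)}$ (as in \eqref{E:ringR}) at $\m_0$ and $\Aut(C,I)$ acts linearly as described above; this uses the hypothesis that $\operatorname{Stab}_C(I)$ has no elements of order $p=\operatorname{char}(k)$, so that the finite group $\Stab_C(I)$ is linearly reductive and the slice argument (and the commutation of invariants with completion) goes through. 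Next I would compute $(\wh R_{(C,I)})^{\Aut(C,I)}$ in two stages, as in \eqref{E:tot-inv}: first take $\Aut(I)=T_\Gamma$-invariants, then $\Stab_C(I)$-invariants. Since $T_\Gamma$ is a torus acting trivially on the locally trivial factor $k[T^\vee\Def^{\rm l.t.}_{(C,I)}]$ and acting on $B(\Gamma)$ by \eqref{E:act-coord}, Theorem \ref{T:pres-quot} gives $R_{(C,I)}^{\Aut(I)}=U(\Gamma)\otimes_k k[T^\vee\Def^{\rm l.t.}_{(C,I)}]$, which is \eqref{E:par-inv}. Applying $\Stab_C(I)$-invariants then yields \eqref{E:tot-inv}.

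The remaining — and most delicate — point is that \emph{completion commutes with taking invariants} in both stages, i.e.\ that completing \eqref{E:tot-inv} at $\m_0\cap R_{(C,I)}^{\Aut(C,I)}$ gives the same thing as taking $\Aut(C,I)$-invariants inside $\wh R_{(C,I)}$. For the torus $T_\Gamma$ this is fine because $T_\Gamma$ acts with all weights of the maximal ideal nontrivial and the invariant decomposition is a direct sum of finite-dimensional weight spaces graded by $\mathbb N$, so $\wh{R}^{T_\Gamma}=\wh{(R^{T_\Gamma})}$; for the finite group $\Stab_C(I)$ this uses linear reductivity, which is exactly where the hypothesis on $p$ enters — the averaging (Reynolds) operator is then $\m$-adically continuous and commutes with completion. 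I expect this compatibility-with-completion bookkeeping, together with correctly invoking the slice theorem in the non-free-action, non-separated stacky setting of \cite{CMKVb}, to be the main obstacle; the algebra of identifying the invariant ring itself is already done by Theorem \ref{T:pres-quot} and the earlier results.

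Concretely, the proof would run: \textbf{(Step 1)} invoke \cite{CMKVb} to get $\wh{\OO}_{\bar J_{d,g},(C,I)}\cong (\wh R_{(C,I)})^{\Aut(C,I)}$; \textbf{(Step 2)} use the decomposition \eqref{E:Rdec} of $R_{(C,I)}$ as $B(\Gamma)\otimes_k k[T^\vee\Def^{\rm l.t.}_{(C,I)}]$ and the explicit $\Aut(I)$-action \eqref{E:expl-act}, apply Theorem \ref{T:pres-quot} to compute $\Aut(I)$-invariants, obtaining \eqref{E:par-inv}; \textbf{(Step 3)} take $\Stab_C(I)$-invariants to reach \eqref{E:tot-inv}; \textbf{(Step 4)} check that completion at $\m_0\cap R_{(C,I)}^{\Aut(C,I)}$ commutes with both invariant operations, using the $\mathbb N$-grading for the torus and linear reductivity of $\Stab_C(I)$ (here the hypothesis $p\nmid |\Stab_C(I)|$ for the relevant orders) for the finite group. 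Assembling Steps 1--4 gives the claimed isomorphism.
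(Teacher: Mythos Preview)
Your proposal is essentially correct and follows the same two-part structure as the paper's proof: (A) identify $\wh{\OO}_{\bar J_{d,g},(C,I)}$ with $(\wh R_{(C,I)})^{\Aut(C,I)}$ via the slice-theoretic result of \cite{CMKVb}, and (B) show that completion commutes with taking $\Aut(C,I)$-invariants. Your Steps~2--3 merely recapitulate the setup \eqref{E:par-inv}--\eqref{E:tot-inv} preceding the theorem and are not part of the proof proper.

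There is, however, one substantive point you gloss over in Step~1. To invoke \cite[Thm.~6.1(ii)]{CMKVb} you must know that the \emph{particular} linear action of $\Aut(C,I)$ on $\wh R_{(C,I)}$ constructed in this section coincides with the canonical action (Rim's action) for which that theorem is formulated. The paper handles this explicitly: the linear action was chosen so that the tangent isomorphism $T\Phi\colon T\Spf\wh R_{(C,I)}\to T\Def_{(C,I)}$ is $\Aut(C,I)$-equivariant, and then Rim's theorem \cite{Rim} (see \cite[Fact~5.4]{CMKVb}) upgrades this to equivariance of the versal map $\Phi$ itself, which pins down the action as the unique Rim action. Without this check, the application of \cite{CMKVb} is not justified. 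You should insert this verification before appealing to the slice result.

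A minor correction in Step~4: your claim that ``$T_\Gamma$ acts with all weights of the maximal ideal nontrivial'' is false --- the torus acts trivially on the variables $T_e$ and on the entire locally trivial factor $k[T^\vee\Def^{\rm l.t.}_{(C,I)}]$. The correct (and sufficient) reason, which you also give, is the graded weight-space decomposition. The paper streamlines this step by observing once and for all that $\Aut(C,I)$, being an extension of a torus by a finite group of order prime to $p$, is linearly reductive, and then citing \cite[Lemma~6.7]{CMKVb} for commutation of invariants with completion in one stroke; there is no need to split into torus and finite-group stages.
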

\begin{proof}
The linear action of $\Aut(C,I)$ on $R_{(C,I)}$ described above induces a unique action on the completion $\wh{R}_{(C,I)}$ of $R_{(C,I)}$ at the maximal ideal $\m_0$.
Since $\Aut(C,I)$ is a linearly reductive group (by our assumption on $\Stab_C(I)$),  the formation of $\Aut(C,I)$-invariants commutes with completion
(see e.g.~\cite[Lemma 6.7]{CMKVb}), or in symbols

\begin{equation}\label{E:inv-comp}
\left(R_{C,I}^{\Aut(C,I)}\right)^{\widehat \ }\cong \left(\wh{R}_{(C,I)}\right)^{\Aut(C,I)}
\end{equation}
where on the right hand side the completion is taken with respect to the maximal ideal $\m_0$ of $\widehat{R}_{(C,I)}$ generated by $T^{\vee}\Def_{(C,I)}$ and on the left the completion is taken with
respect to the maximal ideal $\m_0\cap R_{(C,I)}^{\Aut(C,I)}$.

As observed before, the ring $\wh{R}_{(C,I)}$ is the mini-versal deformation ring of the functor $\Def_{(C,I)}$, which means that there is a formally smooth natural transformation of functors
\begin{equation}\label{E:nat-tran}
\Phi: \Spf \wh{R}_{(C,I)}\to \Def_{(C,I)}
\end{equation}
whose associated map on tangent spaces
\begin{equation}\label{E:map-tan}
T\Phi: T \Spf \wh{R}_{(C,I)} \longrightarrow T \Def_{(C,I)}
\end{equation}
is an isomorphism. Explicitly, the isomorphism $T\Phi$ is obtained by first identifying
the tangent space of $\Spf \wh{R}_{(C,I)}$ with the tangent space $T_{\m_0}R_{(C,I)}=(\m_0/\m_0^2)^{\vee}$ of the ring $R_{(C,I)}$ at $\m_0$ and then by identifying
 $T_{\m_0}R_{(C,I)}$ with $T\Def_{(C,I)}$ using the definition \eqref{E:ringR} of $R_{(C,I)}$.

 Observe now that our specified linear action of $\Aut(C,I)$ on $R_{(C,I)}$ is defined in such a way that
 the isomorphism $T\Phi$ becomes $\Aut(C,I)$-equivariant. Using Rim's arguments (see \cite{Rim}), the $\Aut(C,I)$-equivariance of $T\Phi$ implies that also $\Phi$ is
 $\Aut(C,I)$-equivariant;
hence the specified action of $\Aut(C,I)$ on $\wh{R}_{(C,I)}$ is the unique action that makes $\Phi$ equivariant, according to Rim's theorem (see \cite[Fact 5.4]{CMKVb}).
Therefore, we can apply \cite[Thm.~6.1(ii)]{CMKVb} in order to conclude that
\begin{equation}\label{E:loc-comp}
\wh{\OO}_{\bar J_{d,g},(C,I)}\cong \wh{R}_{(C,I)}^{\Aut(C,I)}.
\end{equation}
The proof of the theorem follows by putting together \eqref{E:inv-comp} and \eqref{E:loc-comp}.
\end{proof}

\subsection{The local structure of the morphism $\pi: \bar J_{d,g}\to \ov M_g$}\label{SS:loc-mor}

The aim of this subsection is to study the local structure of the morphism $\pi:\bar J_{d,g}\to \ov M_g$ around a point $(C,I)\in \bar J_{d,g}$, where we assume as usual that $I$ is poly-stable with respect to $\omega_C$.

First of all, there is a  natural forgetful morphism $\Pi: \Def_{(C,I)}\to \Def_C$, from the deformation functor of the pair $(C,I)$ to the deformation functor of $C$, which is equivariant with respect to the group homomorphism $\Aut(C,I)\to \Aut(C)$
and the natural actions of $\Aut(C,I)$ on $\Def_{(C,I)}$ and of $\Aut(C)$ on $\Def_C$ (see \cite[Def. 3.4]{CMKVb}). The forgetful morphism $\Pi$ fits into the following diagram
\begin{equation}\label{E:seq-def2}
\xymatrix{
\Def_{(C,I)}^{\rm l.t.}\ar[r] \ar[d]& \Def_{(C,I)} \ar[r] \ar[d]^{\Pi} & \prod_{e\in \Sigma}\Def_{(C_e,I_e)}=\Def_{(C,I)}^{\rm loc} \ar[d]\\
\Def_{C}^{\Sigma, \rm l.t.}\ar[r] & \Def_{C} \ar[r] & \prod_{e\in \Sigma}\Def_{C_e}=\Def_{C}^{\Sigma, \rm loc}
}
\end{equation}
where $\Def_{C}^{\Sigma, \rm loc}$ is the local deformation functor of $C$ at the nodes $\Sigma=\Sigma_{(C,I)}$  of $C$ where $I$ is not invertible, and $\Def_{C}^{\Sigma, \rm l.t.}$ is the subfunctor of $\Def_C$ parametrizing deformations of $C$ that are locally trivial around the nodes of $\Sigma$. Passing to the tangent spaces, we get the following diagram with exact rows
\begin{equation}\label{E:seq-tan2}
\xymatrix{
0\ar[r] & T \Def_{(C,I)}^{\rm l.t.}\ar[r] \ar[d]^{T \Pi^{\rm l.t.}}& T\Def_{(C,I)} \ar[r] \ar[d]^{T\Pi} & T \Def_{(C,I)}^{\rm loc} \ar[r] \ar[d]^{T\Pi^{\rm loc}} & 0\\
0 \ar[r] & T \Def_{C}^{\Sigma, \rm l.t.}\ar[r] & T\Def_{C} \ar[r] & T\Def_{C}^{\Sigma, \rm loc} \ar[r] & 0
}
\end{equation}
Observe that the map $T \Pi^{\rm l.t.}$ is surjective and its kernel can be naturally identified with the tangent space $T\Def_L$ of the deformation functor $\Def_L$, where $L$ is the line bundle on the partial normalization $g:C_{\Sigma}\to C$ of $C$ at the nodes of $\Sigma=\Sigma_{(C,L)}$ and $L$ is the unique line bundle on $C_{\Sigma}$ such that $I=g_*(L)$ (see \cite[Lemma 3.16]{CMKVb}).

Fixing a splitting of the second row of \eqref{E:seq-tan2}, we define the following $k$-algebra
\begin{equation}\label{E:ringR2}
R_C:=k[T^\vee \Def_C]=k[T^\vee\Def_{C}^{\Sigma, \rm loc}]\otimes_k k[T^\vee \Def_{C}^{\Sigma, \rm l.t.}]=\bigotimes_{e\in \Sigma} k[T_e]\otimes_k k[T^\vee \Def_{C}^{\Sigma, \rm l.t.}],
\end{equation}
where the variable $T_e$ corresponds to the smoothing of $C$ at $e$. Observe that  the finite group $\Aut(C)$ acts linearly on $R_C$, via its natural action on $T \Def_C$. The diagram \eqref{E:seq-tan2}, after choosing compatible splittings of the horizontal rows and of the left vertical column,  gives rise to an injective morphism of
$k$-algebras
\begin{equation}\label{E:mor-rings} 
\begin{aligned}
R_C=\bigotimes_{e\in \Sigma} k[T_e]\otimes_k k[T^\vee \Def_{C}^{\Sigma, \rm l.t.}]\hookrightarrow R_{(C,I)} &= \bigotimes_{e\in \Sigma} \frac{k[X_e,Y_e,T_e]}{(X_eY_e-T_e)}  \otimes_k k[T^{\vee}\Def^{\rm l.t.}_{(C,I)}]= \\
& =\bigotimes_{e\in \Sigma} \frac{k[X_e,Y_e,T_e]}{(X_eY_e-T_e)}  \otimes_k k[T^\vee \Def_{C}^{\Sigma, \rm l.t.}] \otimes_k k[T^\vee \Def_L].
\end{aligned}
\end{equation}
Consider now the action of $\Aut(I)$ on $R_{(C,I)}$ as in \S \ref{SS:locrings}. From \eqref{E:expl-act}, it follows that each $T_e$ is invariant under the action of $\Aut(I)$ so that the inclusion \eqref{E:mor-rings} factors through
\begin{equation}\label{E:mor-rings2} 
R_C=\bigotimes_{e\in \Sigma} k[T_e]\otimes_k k[T^\vee \Def_{C}^{\Sigma, \rm l.t.}]\hookrightarrow R_{(C,I)}^{\Aut(I)} = U(\Gamma)  \otimes_k  k[T^\vee \Def_{C}^{\Sigma, \rm l.t.}] \otimes_k k[T^\vee \Def_L].
\end{equation}
Note that the finite subgroup $\Stab_C(I)$ acts in a compatible way on both the above rings, while the bigger finite group $\Aut(C)$ acts only on the ring on the left.

\begin{teo}\label{T:loc-mor}
Notation as above. Assume that $\Aut(C)$ does not contain elements of order equal to $p={\rm char}(k)$. The inclusion of complete local rings
$$\wh{\OO}_{\bar M_{g},C}\hookrightarrow \wh{\OO}_{\bar J_{d,g},(C,I)}$$
induced by the surjective morphism $\pi:\bar J_{d,g}\to \ov M_ g$ coincide with the completion of the inclusion
\begin{equation}\label{E:incl1}
R_C^{\Aut(C)} \hookrightarrow R_{(C,I)}^{\Aut(C, I)} = \left(R_{(C,I)}^{\Aut(I)} \right)^{\Stab_C(I)}
\end{equation}
induced by \eqref{E:mor-rings2}, at their maximal ideals $\m_0\cap R_{C}^{\Aut(C)}$ and
$\m_0\cap R_{(C,I)}^{\Aut(C,I)}$, respectively.
\end{teo}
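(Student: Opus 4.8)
The plan is to run the argument proving Theorem~\ref{T:loc-ring} twice in parallel --- once for the pair $(C,I)$ and once for the curve $C$ --- keeping the forgetful morphism $\Pi$ of \eqref{E:seq-def2} linked throughout. First I would record the analogue of Theorem~\ref{T:loc-ring} for $\ov M_g$: just as $\wh R_{(C,I)}$ is mini-versal for $\Def_{(C,I)}$, the completion $\wh R_C$ of the polynomial ring $R_C$ of \eqref{E:ringR2} at the ideal generated by $T^\vee\Def_C$ is a mini-versal deformation ring for $\Def_C$ (the classical Deligne--Mumford description), so applying \cite[Thm.~6.1(ii)]{CMKVb} exactly as in the derivation of \eqref{E:loc-comp} gives $\wh\OO_{\ov M_g,C}\cong \wh R_C^{\Aut(C)}$; since the hypothesis on ${\rm char}(k)$ makes the finite group $\Aut(C)$ linearly reductive, invariants commute with completion (\cite[Lemma~6.7]{CMKVb}) and $\wh\OO_{\ov M_g,C}\cong (R_C^{\Aut(C)})^{\widehat{\ }}$. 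The theorem then amounts to the compatibility of these two presentations with $\pi$.

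The second step is to make the two mini-versal presentations compatible over $\Pi$. Using the compatible splittings of the rows and of the left vertical column of \eqref{E:seq-tan2} already fixed in \S\ref{SS:loc-mor}, the dual of $T\Pi\colon T\Def_{(C,I)}\to T\Def_C$ induces on symmetric algebras precisely the inclusion $R_C\hookrightarrow R_{(C,I)}$ of \eqref{E:mor-rings}; completing at the maximal ideals yields a ring map $\wh R_C\hookrightarrow \wh R_{(C,I)}$ lying over $\Pi$, i.e.\ the formally smooth transformation $\Phi$ of \eqref{E:nat-tran} and its analogue $\Phi_C\colon\Spf\wh R_C\to\Def_C$ can be chosen so that $\Pi\circ\Phi$ factors through $\Phi_C$ via $\Spf$ of this map, and the induced map on tangent spaces is the dual of $T\Pi$. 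Since $\Pi$ is equivariant for the surjection $p\colon\Aut(C,I)\to\Aut(C)$ of \eqref{E:seq-groups} and our linear actions are set up so that $T\Phi$ and $T\Phi_C$ are equivariant, Rim's theorem (\cite{Rim}, \cite[Fact~5.4]{CMKVb}) forces the action of $\Aut(C,I)$ on $\wh R_{(C,I)}$ and that of $\Aut(C)$ on $\wh R_C$ to be the unique ones making $\Phi$ resp.\ $\Phi_C$ equivariant, and makes $\wh R_C\hookrightarrow\wh R_{(C,I)}$ equivariant with respect to $p$.

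With this in hand I would conclude by functoriality and bookkeeping. The functoriality in $C$, resp.\ $(C,I)$, of the construction of \cite[Thm.~6.1(ii)]{CMKVb} identifies the inclusion $\wh\OO_{\ov M_g,C}\hookrightarrow\wh\OO_{\bar J_{d,g},(C,I)}$ induced by $\pi$ with the map on invariants $\wh R_C^{\Aut(C)}\hookrightarrow\wh R_{(C,I)}^{\Aut(C,I)}$ coming from the $p$-equivariant ring map above; commuting invariants with completion once more (legitimate since $\Aut(C)$ is linearly reductive, as is $\Aut(C,I)$, whose torus part $\Aut(I)=T_\Gamma$ is reductive and whose finite quotient $\Stab_C(I)\subseteq\Aut(C)$ has order prime to $p$) rewrites this as the completion of $R_C^{\Aut(C)}\hookrightarrow R_{(C,I)}^{\Aut(C,I)}$. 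Finally, since \eqref{E:expl-act} shows each $T_e$ is $\Aut(I)$-invariant, the map \eqref{E:mor-rings} factors through $R_{(C,I)}^{\Aut(I)}=U(\Gamma)\otimes_k k[T^\vee\Def_{C}^{\Sigma, \rm l.t.}]\otimes_k k[T^\vee\Def_L]$, i.e.\ through \eqref{E:mor-rings2}; taking $\Aut(C)$-invariants on the left and $\Stab_C(I)$-invariants on the right, and using $R_{(C,I)}^{\Aut(C,I)}=(R_{(C,I)}^{\Aut(I)})^{\Stab_C(I)}$ as in \eqref{E:tot-inv}, identifies $R_C^{\Aut(C)}\hookrightarrow R_{(C,I)}^{\Aut(C,I)}$ with the inclusion \eqref{E:incl1} induced by \eqref{E:mor-rings2}, which is the assertion.

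The step I expect to be the main obstacle is the second one together with the functoriality invoked in the third: namely checking that $\pi$, at the level of completed local rings and compatibly with the group actions, is genuinely induced by the forgetful transformation $\Pi$ --- i.e.\ the equivariant functoriality of the mini-versality description of \cite[Thm.~6.1]{CMKVb} for the morphism of moduli stacks $\overline{\mathcal J}_{d,g}\to\overline{\mathcal M}_g$ --- and the careful verification that the compatible splittings of \eqref{E:seq-tan2} turn the dual of $T\Pi$ into exactly the inclusion \eqref{E:mor-rings}. The rest is a verbatim repetition of the proof of Theorem~\ref{T:loc-ring} with the curve deformation functor carried along.
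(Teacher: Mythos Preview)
Your proposal is correct and follows essentially the same approach as the paper's proof: both arguments pass from the algebraic side to the completed side using that invariants commute with completion for linearly reductive groups (\cite[Lemma~6.7]{CMKVb}), build the compatible square of mini-versal presentations over $\Pi$ with the appropriate equivariance via Rim, and then invoke \cite[\S6]{CMKVb} (Luna's slice theorem) to identify the inclusion of completed local rings induced by $\pi$ with the inclusion of invariant rings. The only differences are presentational: the paper records the diagram~\eqref{E:versal} first and then applies Luna's slice directly to obtain a single commutative square~\eqref{E:isorings}, whereas you develop the $\ov M_g$ side separately and then glue; and the paper notes that $\wh R_C$ is in fact \emph{universal} (not merely mini-versal) for $\Def_C$, which makes the equivariance of $\ov\Phi$ immediate rather than requiring Rim.
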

\begin{proof}
The assumption on the order of the elements of $\Aut(C)$ implies that $\Aut(C)$ and $\Aut(C,I)$ are linearly reductive groups. Since  the formation of invariants under the action of a linear reductive group commutes with completion  (see e.g.~\cite[Lemma 6.7]{CMKVb}), we get that the completion of the inclusion \eqref{E:incl1} is equal to the inclusion
\begin{equation}\label{E:incl2}
\wh{R}_C^{\Aut(C)} \hookrightarrow \wh{R}_{(C,I)}^{\Aut(C, I)},
\end{equation}
where the completions, done with respect to the maximal ideals $\m_0\cap R_C$ and $\m_0$ respectively, are acted upon naturally by $\Aut(C)$ and $\Aut(C,I)$ respectively.

From the discussion in \cite[\S 3]{CMKVb}, it follows that the inclusion $\wh R_C\hookrightarrow \wh R_{(C,I)}$ induces, by passing to the formal spectrum, a diagram
\begin{equation}\label{E:versal}
\xymatrix{
\Spf \wh{R}_{(C,I)}\ar[r]^{\Phi}\ar[d]& \Def_{(C,I)}\ar[d]^{\Pi}\\
\Spf \wh{R}_{C}\ar[r]^{\ov\Phi}& \Def_{C}
}
\end{equation}
where $\Phi$ realizes $\wh{R}_{(C,I)}$ as the mini-versal deformation ring of the functor $\Def_{(C,I)}$
(as discussed in the proof of Theorem \ref{T:loc-ring}) and $\ov\Phi$ realizes $\wh{R}_C$ as the universal deformation ring of $\Def_C$. Moreover, $\Phi$ is $\Aut(C,I)$-equivariant (as discussed in the proof of Theorem \ref{T:loc-ring}), $\ov\Phi$ is clearly $\Aut(C)$-equivariant (being an isomorphism of functors)
and the two vertical maps in \eqref{E:versal} are equivariant with respect to the group homomorphism $\Aut(C,I)\to \Aut(C)$.

Therefore, as an application of Luna's slice theorem (see \cite[\S 6]{CMKVb}), we get a commutative diagram
\begin{equation}\label{E:isorings}
\xymatrix{
\wh{R}_{(C,I)}^{\Aut(C,I)} & \wh{\OO}_{\bar J_{d,g},(C,I)} \ar[l]_{\cong}\\
\wh{R}_{C}^{\Aut(C)}\ar@{^{(}->}[u]& \wh{\OO}_{\bar M_{g},C}\ar@{^{(}->}[u]\ar[l]_{\cong}
}
\end{equation}
which concludes the proof.
\end{proof}

Consider now the graph $\Gamma=\Gamma_{(C,I)}$ obtained from the dual graph of $C$ by contracting the edges corresponding to nodes of $C$ where $I$ is locally free, as in \S\ref{SS:locrings}.
It follows from the above discussion that the inclusions $R_C\hookrightarrow R_{(C,I)}^{\Aut(I)}\hookrightarrow R_{(C,I)}$ are given, up to smooth factors, by
the following inclusions of $k$-algebras (with the notation of \S\ref{S:U-inv})
\begin{equation}\label{E:incl-alg}
\begin{aligned}
 k[T_e: e\in E(\Gamma)]  & \hookrightarrow U(\Gamma)=B(\Gamma)^{T_{\Gamma}}  \hookrightarrow B(\Gamma)= k[X_{\er}: \er \in \OE(\Gamma)] \\
 T_e & \mapsto X_{\er}\cdot X_{\el},
\end{aligned}
\end{equation}
where we used that $X_{\er}\cdot X_{\el}\in B(\Gamma)$ is invariant under the action of $T_{\Gamma}$  given in \eqref{E:act-coord}.
 Therefore, we get the following surjective morphism of varieties
\begin{equation}\label{E:mor-fg}
\Spec B(\Gamma)=\Spec k[X_{\er}: \er \in \OE(\Gamma)]\stackrel{f}{\twoheadrightarrow} X_{\Gamma}=\Spec U(\Gamma)\stackrel{g}{\twoheadrightarrow} \Spec k[T_e: e\in E(\Gamma)].
\end{equation}
The above morphisms are toric morphisms of affine toric varieties, which can be described using toric geometry as follows.
With the notation of \S\ref{S:X-without}, consider the following injective linear maps 
\begin{equation*}\label{E:injlin}
\begin{aligned}
\mathbb R\langle e \rangle_{e\in E(\Gamma)}& \stackrel{}{\longrightarrow} \bbH_1(\Gamma,\bbR)=\ker \bbD   \stackrel{}{\longrightarrow} \bbC_1(\Gamma,\bbR), \\
\sum_{e\in E(\Gamma)} a_e\cdot e & \mapsto \sum_{e\in E(\Gamma)} a_e (\er+\el)
\end{aligned}
\end{equation*}
which clearly preserve the integral lattices. By taking duals, we get the following surjective lattice-preserving linear maps 
\begin{equation}\label{E:surlin}
\bbC_1(\Gamma,\bbR)^{\vee} \stackrel{l}{\twoheadrightarrow} \bbH_1(\Gamma,\bbR)^{\vee}Ê\stackrel{h}{\twoheadrightarrow}  \mathbb R\langle e^\vee \rangle_{e\in E(\Gamma)}.
\end{equation}
The above three vectors spaces are endowed with standard scalar products that will be denoted with the same symbol $(\ ,\ )$ (see \S\ref{S:HomGra} and \S\ref{S:X-without}).
Inside the vector space $\bbH_1(\Gamma,\bbR)^{\vee}$, we have the cone $\sigma:=\sigma_{\Gamma}^{\vee}$ introduced in \S\ref{S:X-without}.
The rational polyhedral fan formed by $\sigma$ and all its faces corresponds to the toric variety $X_{\Gamma}$.
Using Lemma \ref{L:prop-cone}\eqref{L:prop-cone0}, it follows that $\sigma$ is equal to

$$\sigma=\conv \langle (\cdot, \er) \rangle_{\er\in \OE(\Gamma)}$$
where $\conv$ denotes the convex hull.
Set 
\begin{equation*}
\begin{aligned}
& \wh{\sigma}:=\conv \langle (\cdot, \er) \rangle_{\er\in \OE(\Gamma)}\subset \bbC_1(\Gamma,\bbR)^{\vee} , \\
& \w{\sigma}:=\conv \langle (\cdot, e) \rangle_{e \in E(\Gamma)}= \mathbb R_{\ge 0}\langle e^\vee \rangle_{e\in E(\Gamma)}\subset\mathbb R\langle e^\vee \rangle_{e\in E(\Gamma)}.
\end{aligned}
\end{equation*}
Clearly, the cone $\wh{\sigma}$ (resp.~$\w{\sigma}$) gives rise to the toric variety $\Spec k[X_{\er}: \er \in \OE(\Gamma)]$ (resp.~$\Spec k[T_e: e\in E(\Gamma)]$).
Moreover, the lattice-preserving linear maps \eqref{E:surlin} are such that $l(\wh{\sigma})=\sigma$ and $h(\sigma)=\w{\sigma}$; hence they induce morphisms
$ \Spec k[X_{\er}: \er \in \OE(\Gamma)]\stackrel{}{\twoheadrightarrow} X_{\Gamma} \stackrel{}{\twoheadrightarrow} \Spec k[T_e: e\in E(\Gamma)]$ which are easily seen to coincide with the morphisms
$f$ and $g$ of \eqref{E:mor-fg}.

\subsection{Singularities of $\overline{M}_{g}$}\label{S:SingMg} 
We recall the following result of Harris--Mumford and Ludwig. 

\begin{teo}[{\cite[Thm.~2]{HM}, \cite[Prop.~4.2.5, Cor.~4.2.6]{Lud}}] \label{teoHML}
Let $g\ge 4$, $C\in \overline M_g$, and $\phi\in \operatorname{Aut}(C)$.  Set $R_C$ to be a mini-versal space for $C$.   If  $\phi$  acts as a pseudo-reflection on $\Spec R_C$, or 
$\Spec R_C /\langle \phi\rangle $ does not have canonical singularities, then the following holds:

\begin{enumerate}
\item The curve $C$ has an \emph{elliptic tail} $E\subset C$, i.e.~an irreducible subcurve of arithmetic genus one that meets the complementary subcurve $E^c:=\ov{C\setminus E}$  in one point $p$, and $\phi$ is an \emph{elliptic tail automorphism}, i.e.~$\phi_{^|E^c}=\id_{E^c}$.  

\item The restriction $\phi_{|E}$ is an automorphism of $E$, fixing $p$, with order $n=2$, $3$, $4$ or $6$.   If  $n=4$, then $E$ is smooth with $j$-invariant equal to $1728$, and if  $n=3$ or $6$, then $E$ is smooth with $j$-invariant equal to $0$. 

\item If  $E$ is a singular elliptic curve, then  $\phi_{|E}$ has order $n=2$ and is given as follows: Denote by $\nu:E^{\nu}\to E$ the normalization of $E$ and identify $E^{\nu}$ with $\bbP^1$ in such a way that $\nu^{-1}=\infty=(1,0)$ and $\nu^{-1}(q)=\{(1,1),(-1,1)\}$.  Then $\phi_{|E}$ is induced by the involution of $\bbP^1$ sending $(x,y)$ into $(-x,y)$.

\end{enumerate}

Moreover, let $g\ge 4$, $C\in \overline M_g$ be a curve with  an elliptic tail $E$, and $\phi\in \operatorname{Aut}(C)$ be  an elliptic tail automorphism (with respect to $E$). 
Let  $\{t_1,\ldots,t_{3g-3}\}$ be coordinates of $T \Def_C$ such that $t_1$ corresponds to the smoothing of $C$ at the node $p$, and $t_2$ corresponds, if $E$ is smooth, to a coordinate for $T_{(E,p)}(M_{1,1})$  (corresponding to the $j$-invariant of $E$), or if $E$ is singular,  to the smoothing of $C$ at $q$.  Then the action of $\phi$ on $T \Def_C$ on the above coordinates is given by the following matrix
(depending on the choice of the primitive $n$-th root of unity $\zeta$):
\begin{equation}\label{E:M-matrix}
M(\phi)=\begin{cases}
\left(
  \begin{array}{ccc}
    \zeta^{1} &  &  \\
     & \zeta^0 &  \\
     &  & \II  \\
  \end{array}
\right)  & \text{ if } n=2,\\
\left(
  \begin{array}{ccc}
    \zeta^{1} &  &  \\
     & \zeta^2 &  \\
     &  & \II  \\
  \end{array}
\right)  \text{ or }
\left(
  \begin{array}{ccc}
    \zeta^{3} &  &  \\
     & \zeta^2 &  \\
     &  & \II  \\
  \end{array}
\right)
& \text{ if } n=4,\\
\left(
  \begin{array}{ccc}
    \zeta^{1} &  &  \\
     & \zeta^2 &  \\
     &  & \II  \\
  \end{array}
\right)  \text{ or }
\left(
  \begin{array}{ccc}
    \zeta^2 &  &  \\
     & \zeta^1 &  \\
     &  & \II  \\
  \end{array}
\right)
& \text{ if } n=3,\\
\left(
  \begin{array}{ccc}
    \zeta^5 &  &  \\
     & \zeta^4 &  \\
     &  & \II  \\
  \end{array}
\right)  \text{ or }
\left(
  \begin{array}{ccc}
    \zeta^1 &  &  \\
     & \zeta^2 &  \\
     &  & \II  \\
  \end{array}
\right)
& \text{ if } n=6,\\
\end{cases}
\end{equation}
where $\II$ is the suitable identity matrix.

\end{teo}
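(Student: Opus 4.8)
The statement is \cite[Thm.~2]{HM} together with \cite[Prop.~4.2.5, Cor.~4.2.6]{Lud}, and the plan is to recall and follow that argument. First I would convert the hypothesis into a numerical one by means of the Reid--Tai--Shepherd-Barron criterion (cf.~Appendix \ref{S:RT-toric}): since $\Spec R_C$ is smooth of dimension $3g-3$ and $\langle\phi\rangle$ is cyclic, say of order $m$, one writes the eigenvalues of $\phi^k$ on $T\Def_C$ as $\zeta^{a^{(k)}_1},\dots,\zeta^{a^{(k)}_{3g-3}}$ with $\zeta$ a primitive $m_k$-th root of unity ($m_k$ the order of $\phi^k$ and $0\le a^{(k)}_i<m_k$). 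Then $\Spec R_C/\langle\phi\rangle$ fails to have canonical singularities exactly when some $\phi^k$ that is not a pseudo-reflection has \emph{age} $\frac{1}{m_k}\sum_i a^{(k)}_i<1$, while $\phi$ is a pseudo-reflection exactly when all but one of the $a^{(1)}_i$ vanish. So the problem becomes to classify the automorphisms $\phi$ of a stable curve $C$ with $g\ge 4$ such that $\phi$ fixes a hyperplane in $T\Def_C$, or some power of $\phi$ has age $<1$ on $T\Def_C$.

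Next I would exploit the structure of $T\Def_C$. With $\nu\colon \widetilde C\to C$ the normalization and $D=\nu^{-1}(C_{\rm sing})=\sum_j(p_j+q_j)$, there is an $\Aut(C)$-equivariant extension
\[
0\longrightarrow H^1\bigl(\widetilde C,\,T_{\widetilde C}(-D)\bigr)\longrightarrow T\Def_C\longrightarrow \bigoplus_{\text{nodes }j} T_{p_j}\widetilde C\otimes T_{q_j}\widetilde C\longrightarrow 0,
\]
whose right-hand term is the space of node-smoothing parameters, permuted among its summands by $\Aut(C)$. The crucial point is that for $\phi$ to fix a hyperplane, or for a power of $\phi$ to have age $<1$, the element $\phi$ must fix all but at most one irreducible component of $\widetilde C$ together with the preimages of the nodes on those components, so that $\phi$ is genuinely supported on a subcurve carrying only a bounded amount of moduli; a dimension count using $g\ge 4$ then forces this subcurve to be an elliptic tail $E$ and $\phi$ to be an elliptic tail automorphism, which is part (1). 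I expect the \emph{main obstacle} to be precisely this estimate: bounding below, component by component and node by node, the contribution to the relevant ages while simultaneously tracking the pseudo-reflection alternative, so as to eliminate every configuration other than a single elliptic tail.

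Finally, once $\phi$ is an elliptic tail automorphism, parts (2) and (3) follow from the classical classification of finite-order automorphisms of a one-pointed genus-one curve $(E,p)$: in the smooth case the order is $n\in\{2,3,4,6\}$, with $n=4$ forcing $j(E)=1728$ and $n\in\{3,6\}$ forcing $j(E)=0$, and in the singular case $n=2$ with the stated description on $\bbP^1=E^\nu$. For the matrix \eqref{E:M-matrix} I would then compute the $\phi$-action on $T\Def_C$ directly from the extension above: $\phi$ is the identity on the $(3g-5)$-dimensional subspace of deformations that neither smooth the node $p$ nor vary $(E,p)$; it acts by the weight of $\phi$ on $T_pE$ on the smoothing parameter $t_1$ of $p$; and on $t_2$ it acts by the weight of $\phi_{|E}$ on $T_{(E,p)}M_{1,1}\cong H^1(E,T_E(-p))$ in the smooth case, or by the weight on the smoothing parameter of the second node $q$ in the singular case. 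Carrying this out for each value of $n$ and recording the ambiguity from the choice of primitive root $\zeta$ yields the matrices listed; the complete details are in \cite{HM} and \cite{Lud}.
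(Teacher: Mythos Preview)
The paper does not give its own proof of this theorem; it is stated as a result of Harris--Mumford and Ludwig, with references but no argument in the text. Your proposal is a faithful sketch of the strategy in those references---the Reid--Tai reduction, the equivariant local-to-global decomposition of $T\Def_C$, the dimension count forcing an elliptic tail, and the explicit eigenvalue computation---so there is nothing to compare against and nothing to correct.
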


\subsection{Singularities of $\bar J_{d,g}$}\label{SS:sing-Jac}

The aim of this subsection is to prove that $\bar J_{d,g}$ has  canonical singularities if $g\geq 4$ and ${\rm char}(k)=0$.

\begin{teo}\label{T:can-sing}
Assume that ${\rm char}(k)=0$, and $g\geq 4$.  Then the universal compactified  Jacobian $\bar J_{d,g}$ has  canonical singularities for any $d\in \mathbb Z$.
\end{teo}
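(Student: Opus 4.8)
The plan is to combine the local description of $\wh{\OO}_{\bar J_{d,g},(C,I)}$ from Theorem \ref{T:loc-ring} with the structure theory of the cographic rings from Theorem \ref{T:sing} (equivalently Theorem \ref{teoUG}), and then to control the finite quotient by $\Stab_C(I)$ using the Reid--Tai--Shepherd-Barron criterion for singular toric varieties developed in the appendix. Since canonicity of singularities is local and can be checked on completed local rings, it suffices to show that for every $(C,I)\in\bar J_{d,g}$ the ring $\wh{\OO}_{\bar J_{d,g},(C,I)}\cong\bigl(U(\Gamma)\otimes_k k[T^\vee\Def^{\rm l.t.}_{(C,I)}]\bigr)^{\Stab_C(I)\,\widehat{\ }}$ has canonical singularities, where $\Gamma=\Gamma_{(C,I)}$. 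Note that in characteristic $0$ the hypothesis of Theorem \ref{T:loc-ring} on the order of $\Stab_C(I)$ is automatic, so that result applies without restriction.

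First I would dispose of the case where $\Stab_C(I)$ acts on $X_\Gamma\times\Spec k[T^\vee\Def^{\rm l.t.}_{(C,I)}]$ without ``bad'' elements: by Theorem \ref{T:sing} the variety $X_\Gamma$ is Gorenstein with terminal (hence canonical) singularities, so the product with an affine space is again Gorenstein terminal, and — provided no element of $\Stab_C(I)$ acts as a pseudo-reflection and the Reid--Tai sum condition (Proposition \ref{P:crit-cyc} in conjunction with Theorem \ref{T:redcyc}) is satisfied at every point and for every element — the quotient has canonical singularities. Thus the whole problem reduces to understanding when an element $\phi\in\Stab_C(I)$ either acts as a pseudo-reflection or violates the Reid--Tai inequality. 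Here is where Theorem \ref{teoHML} enters: the action of $\Stab_C(I)\subseteq\Aut(C)$ on the ``curve'' directions is governed by its action on $T\Def_C$, which by Harris--Mumford--Ludwig can only be problematic when $C$ has an elliptic tail and $\phi$ is an elliptic tail automorphism, with the explicit eigenvalue matrix \eqref{E:M-matrix}. The key structural point is that an elliptic tail $E$ meets the rest of the curve in a single node, so either that node lies in $\Sigma_{(C,I)}$ — in which case it contributes an edge of $\Gamma$ that is a \emph{separating} edge, and Lemma \ref{L:sep-edge} splits off an $\mathbb{A}^1$-factor on which $\phi$ acts trivially (since $\phi_{|E^c}=\id$) — or the node is not in $\Sigma$, in which case it is smoothed and contributes a coordinate $T$ on which $\phi$ again acts trivially. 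Either way the ``bad'' direction $t_1$ of \eqref{E:M-matrix} (the smoothing of the elliptic-tail node) is untouched by the toric part and sits inside the smooth factor where the classical Reid--Tai analysis of Harris--Mumford applies verbatim.

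The main obstacle, and the bulk of the argument, is the bookkeeping that shows the Reid--Tai sum \emph{over the toric variable directions} can only help, never hurt. Concretely, for $\phi\in\Stab_C(I)$ with $\phi^m=1$ and a chosen primitive root of unity, one must write the age/Reid--Tai contribution of $\phi$ at a point of $X_\Gamma$ (using the description of $\sigma_\Gamma^\vee$ and its primitive ray generators $u_{\er}$ from Lemma \ref{L:prop-cone}, together with the generalized criterion of the appendix) and check it is a nonnegative rational number; combined with the contribution from the smooth factor $k[T^\vee\Def^{\rm l.t.}_{(C,I)}]$, whose relevant part is exactly the Harris--Mumford matrix restricted away from the elliptic-tail coordinates, the total is $\ge 1$ unless we are in the genuinely exceptional Harris--Mumford situation — but that situation has been arranged to live in the smooth factor, where the classical computation (the $j$-invariant direction $t_2$ together with $t_1$ already gives $\ge 1$ for $n=2,3,4,6$, cf.\ \eqref{E:M-matrix} and \cite{HM, Lud}) closes the case. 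I would also need to verify separately that no element of $\Stab_C(I)$ acts as a pseudo-reflection on the product: a pseudo-reflection would have to fix a hyperplane, and since $\phi$ acts by permuting/scaling the oriented-edge coordinates of $B(\Gamma)$ compatibly with its action on $\Gamma$, one checks using \eqref{E:act-coord}--\eqref{E:expl-act} and Theorem \ref{teoHML}(1)–(3) that the only candidates are again elliptic-tail involutions, handled as above. Assembling: at every point the Reid--Tai inequality holds and there are no pseudo-reflections, so by the appendix $\wh{\OO}_{\bar J_{d,g},(C,I)}$ has canonical singularities, and since $(C,I)$ was arbitrary, $\bar J_{d,g}$ has canonical singularities.
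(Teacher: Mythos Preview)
Your overall architecture matches the paper's: reduce to completed local rings via Theorem~\ref{T:loc-ring}, use Theorem~\ref{T:redcyc} to pass to cyclic $\langle\phi\rangle\le\Stab_C(I)$, and split into a ``generic'' case (where the Harris--Mumford analysis on $\Spec R_C$ already gives Reid--Tai $\ge 1$) and the ``elliptic-tail'' case. Your generic case is essentially the paper's Case~1, handled there by Lemma~\ref{L:map-smooth}, which makes precise your heuristic that the toric directions ``can only help''.

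There is, however, a genuine gap in your elliptic-tail case. Two points:

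\emph{First}, the claim that $\phi$ acts trivially on the $\bbA^1$-factor split off by Lemma~\ref{L:sep-edge} is wrong: the coordinate on that factor is $T_p$, the smoothing parameter at the elliptic-tail node, and this is exactly the $t_1$ on which $\phi$ acts by $\zeta$ (see \eqref{E:M-matrix}). You seem to realize this later, but the argument as written is internally inconsistent.

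\emph{Second, and more seriously}, your parenthetical assertion that ``the $j$-invariant direction $t_2$ together with $t_1$ already gives $\ge 1$ for $n=2,3,4,6$'' is false. For $n=2$ the matrix $M(\phi)$ in \eqref{E:M-matrix} has eigenvalues $\zeta^1,\zeta^0$, giving age $1/2$; for one choice of root at $n=4$ one gets $3/4$; for one choice at $n=6$ one gets $1/2$. These are precisely the cases in which $\Spec R_C/\langle\phi\rangle$ fails to be canonical, and no amount of nonnegative toric contribution will rescue them. What the paper uses instead is a contribution you have not identified: the factor $k[T^\vee\Def^{\rm l.t.}_{(C,I)}]$ is strictly larger than the Harris--Mumford space, since it contains $T\Def_L$, the deformations of the line bundle $L$ on the partial normalization (see \eqref{E:mor-rings} and \eqref{E:decL}). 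The action of $\phi$ on $T_{I_E}(\Pic^{d_E}(E))$ contributes an additional eigenvalue (the matrix $N(\phi)$ of \eqref{E:N-matrix}), and it is \emph{this} extra direction that pushes the age up to $\ge 1$. In the subcase where $E$ is nodal and $I$ is not locally free at that node, $T\Def_{L_{|E_\Sigma}}=0$ and the needed eigenvalue comes instead from $TU(\Gamma_E)=\langle X_q^\vee,Y_q^\vee\rangle$, on which $\phi$ acts by swapping $X_q\leftrightarrow Y_q$ (the matrix $P(\phi)$ of \eqref{E:P-matrix}). Without isolating these extra directions and computing $\phi$'s action on them, the elliptic-tail case does not close.
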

\begin{proof}
Since the property of having canonical singularities is invariant under localization and completion (see e.g.~\cite[Prop.~4-4-4]{Mats}), it is enough to show, by Theorem \ref{T:loc-ring},
that the affine variety
\begin{equation}\label{E:aff-mod}
\Spec\left[\left(R_{(C,I)}\right)^{\Aut(C, I)}\right]= \Spec\left[\left(R_{(C,I)}^{\Aut(I)}\right)^{\Stab_C(I)}\right]=\Spec \left(R_{(C,I)}^{\Aut(I)}\right) / \Stab_C(I)
\end{equation}
has canonical singularities for every $(C,I)\in \bar J_{d,g}$. 

Roughly speaking, the outline of the argument from this point is as follows.  We take the point $(C,I)\in \bar J_{d,g}$, and consider its image $C\in \overline M_g$.  Then we   break the argument into two parts: (1) $\overline M_g$ has canonical singularities near $C$, and  (2) $\overline M_g$ does not have canonical singularities near $C$.  In case (1), we use a generalization of the Reid--Tai criterion that can be applied to singular toric varieties (we review this generalization of Reid--Tai  in the appendix), and we obtain that $\Spec \left(R_{(C,I)}^{\Aut(I)}\right) / \Stab_C(I)$ (and hence $\bar J_{d,g}$) has canonical singularities at $(C,I)$.  In case (2), there is a short list due to Harris--Mumford of possible curves where $\overline M_g$ may  fail to have canonical singularities (see \S \ref{S:SingMg}).  In these cases, it will turn out that $\Spec \left(R_{(C,I)}^{\Aut(I)}\right)$ is smooth.  Thus we can apply the usual Reid--Tai criterion.  From the work of Harris--Mumford, and Ludwig (see \S \ref{S:SingMg}), one has an explicit description of the  actions needed for the analysis.  In the end, for case (2) the argument  is very similar to that in \cite{BFV}, and establishes that $\Spec \left(R_{(C,I)}^{\Aut(I)}\right) /\Stab_C(I)$ (and hence $\bar J_{d,g}$) also has  canonical singularities at $(C,I)$ in this case.    Technically, since we are able to focus on one automorphism of $(C,I)$ at a time, the argument is broken into somewhat finer pieces than just described, but this captures the main points.   

We now proceed to implement this strategy:

To begin, a standard result (see Theorem \ref{T:redcyc}) says that
$\Spec \left(R_{(C,I)}^{\Aut(I)}\right) / \Stab_C(I)$
has canonical singularities if and only 
if for every $\phi\in \Stab_C(I)\subseteq \Aut(C)$ the quotient
$$\Spec \left(R_{(C,I)}^{\Aut(I)}\right) / \langle \phi\rangle $$
has canonical singularities.  Thus we proceed by considering the quotients $\Spec \left(R_{(C,I)}^{\Aut(I)}\right) / \langle \phi\rangle$.

\subsection*{Case 1} \emph{
The automorphism $\phi\in \Stab_C(I)$ does not act as a pseudo-reflection on $\Spec R_C$ and $\Spec R_C /\langle \phi\rangle $ has canonical singularities}.
\vspace{0.1cm}

We will show that  $\Spec \left(R_{(C,I)}^{\Aut(I)}\right) / \langle \phi\rangle$ has canonical singularities.
We will apply Lemma \ref{L:map-smooth}, which is essentially a variation on the  Reid--Tai criterion tailored to this setting,  to the following morphism $\Psi$ induced by \eqref{E:mor-rings2} 
$$
\xymatrix{ 
\Spec \left(R_{(C,I)}^{\Aut(I)}\right) \ar@{=}[d] \ar[r]^{\Psi} &    \Spec R_C\times \Spec k[T^\vee \Def_L] \ar@{=}[d] \\
\Spec \left( U(\Gamma) \otimes   k[T^\vee \Def_{C}^{\Sigma, \rm l.t.}] \otimes  k[T^\vee \Def_L] \right) \ar[r] & \Spec \left( k[T_e:\: e\in \Sigma] \otimes  k[T^\vee \Def_{C}^{\Sigma, \rm l.t.}] \otimes k[T^\vee \Def_L]\right)
}
$$
and the natural action  of $\bbZ_r=\langle \phi \rangle$.  The added factor of $\Spec k[T^\vee \Def_L]$ on the right makes the computation more tractable.  
Let us check the hypothesis of Lemma \ref{L:map-smooth}. 

First of all, $\Psi$ is a toric morphism of affine toric varieties that acts as  the identity on the last two factors $\Spec k[T^\vee \Def_{C}^{\Sigma, \rm l.t.}]$ and $\Spec k[T^\vee \Def_L]$, and coincides with the map $g: X_{\Gamma}=\Spec U(\Gamma)\to \Spec k[T_e\: :\: e\in E(\Gamma)]$ of \eqref{E:mor-fg} on the first factor. As explained in \S\ref{SS:loc-mor}, the morphism $g$  is induced by the lattice-preserving linear map  $h: \bbH_1(\Gamma,\bbR)^{\vee}\rightarrow \mathbb R\langle e^\vee\rangle_{e\in E(\Gamma)}$  
of \eqref{E:surlin} which sends the cone $\sigma=\sigma_{\Gamma}^\vee$ associated to the toric variety $X_{\Gamma}$ to the cone $\wt{\sigma}$ corresponding to the toric variety $\Spec k[T_e\: e\in E(\Gamma)]$.  By Lemma \ref{L:prop-cone}\eqref{L:prop-cone2}, the extremal rays of $\sigma_{\Gamma}^\vee$ are given by $\langle (\cdot, \er)\rangle:=\bbR_{\geq 0}\cdot \er^\vee $, as $\er$ varies among the oriented edges $\OE(\Gamma)$ of $\Gamma$. 
 As explained in \S\ref{SS:loc-mor}, the linear map $h$ sends the extremal ray 
$\langle (\cdot, \er)\rangle$  of the cone $\sigma=\sigma_{\Gamma}^\vee$ into the extremal ray $\langle (\cdot, e)\rangle $ of $\wt{\sigma}$, where $e\in E(\Gamma)$ is the (unoriented) edge underlying $\er\in \OE(\Gamma)$.  
Furthermore, by the definition \eqref{E:surlin} it follows that $h$ sends the primitive element $(\cdot, \er)$ of the extremal ray  $\langle (\cdot, \er)\rangle$ (see Lemma \ref{L:prop-cone}\eqref{L:prop-cone3}) onto the primitive element $(\cdot, e)$ of the extremal ray  $\langle (\cdot, e)\rangle$. This shows that hypothesis \eqref{hypii} and \eqref{hypiii} of Lemma  \ref{L:map-smooth} are satisfied.

Consider now the action of $\bbZ_r=\langle \phi\rangle\subset \Stab_C(I)$ on the domain and codomain of $\Psi$. The action preserves the decompositions of the domain and codomain, and the toric structure on the smooth factor  $\operatorname{Spec}\left(k[T^\vee \Def_{C}^{\Sigma, \rm l.t.}] \otimes k[T^\vee \Def_L]\right)$ is chosen via an eigen basis for the action of $\phi$.  
Considering the modular interpretation of the other factors, the two actions preserve the tori inside the domain and codomain,  and moreover, as observed in \S\ref{SS:loc-mor},
the morphism $\Psi$ is $\bbZ_r$-equivariant.  In addition, the toric variety $ \Spec R_C\times \Spec k[T^\vee \Def_L] $ is smooth and $\bbZ_r$ acts on it without pseudo-reflections since $\phi$ does not act as a pseudo-reflection already 
on $\Spec R_C$ by assumption. This shows that the hypothesis \eqref{hypa} and \eqref{hypb} of Lemma \ref{L:map-smooth} are satisfied. 

Finally, the quotient $\Spec R_C/\langle \phi\rangle$ has canonical singularities by assumption. Using the Reid--Tai criterion \eqref{eqnRSBT} and the fact that $\phi$ does not act as a pseudo-reflection on $\Spec R_C$, this is equivalent to the fact that the age of $\phi$ on $\Spec R_C$ with respect to any primitive $r$-root of unity is greater than or equal to $1$. Of course, this remains true for the age of $\phi$ acting on the space $\Spec R_C\times \Spec k[T^\vee \Def_L]$, which implies that 
$\left( \Spec R_C\times \Spec k[T^\vee \Def_L]\right)/\langle \phi\rangle$ has canonical singularities by the Reid--Tai criterion.  

We can now apply Lemma \ref{L:map-smooth} in order to conclude that $\Spec \left(R_{(C,I)}^{\Aut(I)}\right)/\langle \phi\rangle$ has canonical singularities,  q.e.d.~for Case 1.

\subsection*{Case 2}
\emph{The automorphism $\phi\in \Stab_C(I)\subseteq \Aut(C)$  either   acts as a pseudo-reflection on $\Spec R_C$ or 
$\Spec R_C /\langle \phi\rangle $ does not have canonical singularities. }
\vspace{0.1cm}

 The analysis we are going to perform in this case is similar to the analysis that was performed in \cite[\S 4]{BFV}; however, there are two main differences: here we use the Pandharipande \cite{Pan} modular interpretation of $\bar J_{d,g}$
instead of the Caporaso \cite{caporaso} modular interpretation of $\bar J_{d,g}$ used in \emph{loc.~cit.};
moreover, we will not restrict ourself to the stable locus, contrary to \emph{loc.~cit}.

To begin, 
according to the results  of Harris--Mumford and Ludwig (see Theorem \ref{teoHML}), 
Case 2 can occur only if $C$ has an \emph{elliptic tail} $E\subset C$, i.e.~a connected subcurve of arithmetic genus one which meets the complementary subcurve $E^c:=\ov{C\setminus E}$  in one point $p$, and $\phi$ is an \emph{elliptic tail automorphism}, i.e.~$\phi_{^|E^c}=\id_{E^c}$.
We now consider two sub-cases:

\vskip .1 in 
\un{Case 2-I}: \emph{The sheaf $I$ is not locally free at $p$}.

\un{Case 2-II}: \emph{The sheaf $I$ is locally free at $p$}.
\vskip .1 in 

Note that in either case,  if $E$ is a rational elliptic tail with one node $q$, then $I$ could be locally free, or not, at $q$.

\vskip .1 in 
Consider now the  ring $R_{(C,I)}^{\Aut(I)}$ as  in \eqref{E:par-inv}.
As usual, denote by $\Gamma=\Gamma_{(C,I)}$ the graph obtained from the dual graph of $C$ by contracting all the edges corresponding to nodes of $C$ where $I$ is locally free. Moreover, denote by $\Gamma_E$ (resp.~$\Gamma_{E^c}$) the graph obtained from the dual graph of $E$ (resp.~of $E^c$) by contracting all the edges corresponding to the nodes of $E$ (resp.~of $E^c$) where $I$ is locally free.

In Case 2-II, the graph $\Gamma$ is obtained by joining the graphs $\Gamma_E$ and $\Gamma_{E^c}$ along a common vertex, and in Case 2-I by means of a separating edge corresponding to the node $p$. Therefore, from the explicit presentation of $U(\Gamma)\cong D(\Gamma)$ given in Theorem \ref{T:exp-pres} (see also Lemma \ref{L:sep-edge}), it follows that
\begin{equation}\label{E:splitU}
U(\Gamma)=
\begin{cases}
U(\Gamma_{E^c})\otimes_k U(\Gamma_E)\otimes_k k[T_p] & \text{ in Case 2-I,}\\
U(\Gamma_{E^c})\otimes_k U(\Gamma_E) & \text{ in Case 2-II.}\\
\end{cases}
\end{equation}
 The graph $\Gamma_E$ consists of a vertex with a loop if $E$ is a rational elliptic tail with one node $q$ and $I$ is not locally free at $q$; otherwise, $\Gamma_E$ has one vertex and no edges. Therefore, using Theorem \ref{T:pres-quot} (and say Example \ref{E:loop1}), we easily compute
\begin{equation}\label{E:UE}
U(\Gamma_E)=D(\Gamma_E)=
\begin{cases}
k[X_q,Y_q] & \text{ if } E \text{ has a node } q \text{ and } I \text{ is not locally free at } q, \\
k & \text{ otherwise.}
\end{cases}
\end{equation}
Consider now the automorphism $\phi\in \Stab_C(I)$. Clearly, $\phi$ acts on $U(\Gamma)$ by preserving the decomposition \eqref{E:splitU} and moreover, since $\phi_{E^c}=\id_{E^c}$  by assumption, $\phi$  acts trivially on $U(\Gamma_{E^c})$. Therefore, we have that
\begin{equation}\label{E:1quot}
\Spec (R_{(C,I)}^{\Aut(I)})/\langle \phi\rangle =
\begin{cases}
\Spec U(\Gamma_{E^c}) \times \Spec \left(U(\Gamma_E)\otimes_k k[T_p]\otimes_ k k[T^{\vee}\Def_{(C,I)}^{\rm l.t.}]\right)/\langle \phi\rangle & \text{ in Case 2-I,}\\
\Spec U(\Gamma_{E^c}) \times \Spec \left(U(\Gamma_E)\otimes_ k k[T^{\vee}\Def_{(C,I)}^{\rm l.t.}]\right)/\langle \phi\rangle & \text{ in Case 2-II.}\\
\end{cases}
\end{equation}
Since $\Spec U(\Gamma_{E^c})$ has canonical (and even terminal) singularities by Theorem \ref{T:sing}, it is enough to prove that $\Spec \left(U(\Gamma_E)\otimes_k k[T_p]\otimes_ k k[T^{\vee}\Def_{(C,I)}^{\rm l.t.}]\right)/\langle \phi\rangle$ and $\Spec \left(U(\Gamma_E)\otimes_ k k[T^{\vee}\Def_{(C,I)}^{\rm l.t.}]\right)/\langle \phi\rangle$ have canonical singularities.  Taking into account \eqref{E:UE}, we see that in both cases we are dealing with finite quotient singularities so that we can apply the classical Reid--Tai criterion (see Theorem \ref{T:critsmooth}) to check canonicity.

Before applying the criterion, recall from \eqref{E:mor-rings} the splitting
$$k[T^{\vee}\Def_{(C,I)}^{\rm l.t.}]\cong k[T^\vee \Def_{C}^{\Sigma, \rm l.t.}] \otimes_k k[T^\vee \Def_L],
$$
where $L$ is the unique line bundle on the partial normalization $g:C_{\Sigma}\to C$ of $C$ at the nodes
$\Sigma=\Sigma_{(C,I)}$ with the property  that $g_*(L)=I$.
We now want to choose a suitable basis of the vector space
\begin{equation}\label{eqnV}
V:=
\begin{cases}
T U(\Gamma_E)\oplus T k[T_p]\oplus T \Def_{C}^{\Sigma, \rm l.t.} \oplus T \Def_L & \text{ in Case 2-I,}\\
T U(\Gamma_E)\oplus T \Def_{C}^{\Sigma, \rm l.t.} \oplus T \Def_L  & \text{ in Case 2-II,}\\
\end{cases}
\end{equation}
and compute the matrix $R(\phi)$ of $\phi$ in terms of the chosen basis.
\vskip .1 in 

First observe that in both Case 2-I and 2-II, the upper left $2\times 2$ sub-matrix of $M(\phi)$ from  \eqref{E:M-matrix} appears as a block factor of the matrix $R(\phi)$. Indeed, in Case 2-I we can choose the coordinate $t_1$ of $T\Def_C$ corresponding to the smoothing of $C$ at the node $p$ as a coordinate of $T k[T_p]$, and in Case 2-II, we can choose $t_1$ as one of the coordinates of $T \Def_{C}^{\Sigma, \rm l.t.}$. Moreover, if $n>2$ (which implies that $E$ is smooth), then we can choose the coordinate $t_2$ of $T\Def_C$ coming from $T_{(E,p)}(M_{1,1})$, as one of the coordinates of $T \Def_{C}^{\Sigma, \rm l.t.}$.

We now  focus our attention on the action of $\phi$ on $T \Def_L$. 
Denote by $E^c_{\Sigma}$ (resp.~$E_{\Sigma}$) the normalization of $E^c$ (resp.~$E$) at the nodes belonging to $\Sigma$. The curve $C_{\Sigma}$ is the disjoint union of $E^c_{\Sigma}$ and $E_{\Sigma}$ in Case 2-I, while it is obtained by joined $E^c_{\Sigma}$ and $E_{\Sigma}$ at the separating point $p$ in Case 2-II. In any case, $L$ is completely determined by its restrictions $L_{|E^c_{\Sigma}}$ and $L_{|E_{\Sigma}}$, and moreover we have a decomposition
\begin{equation}\label{E:decL}
T \Def_L=T  \Def_{L_{|E_{\Sigma}}}\oplus T \Def_{L_{|E^c_{\Sigma}}}.
\end{equation}
Since $\phi_{|E^c}=\id_{E^c}$ by assumption, we have that $\phi$ acts trivially on $T \Def_{L_{|E^c_{\Sigma}}}$.

\vskip .1 in 

At this point, we have established what we need from the breakdown of Case 2 into Case 2-I and Case 2-II.  In short, in all of \emph{Case 2, the upper left $2\times 2$ sub-matrix of  $M(\phi)$ from  \eqref{E:M-matrix} will appear as a block factor of the matrix $R(\phi)$, and the action on $T \Def_L$ is determined by the action on 
 $T  \Def_{L_{|E_{\Sigma}}}\cong T_{L_{|E_{\Sigma}}}(\Pic(E_{\Sigma}))$.}
 
   Let us now examine the action of $\phi$ on $T  \Def_{L_{|E_{\Sigma}}}$.
  For this we consider 3 new subcases of Case 2:
\vskip .1 in 
\un{Case 2-i}: \emph{$E$ is smooth}

\un{Case 2-ii}: \emph{$E$ is a rational elliptic curve with one node $q$ and $I$ is locally free at $q$.}

\un{Case 2-iii}: \emph{$E$ is a rational elliptic curve with one node $q$ and  $I$ is not locally free at $q$. }

 \vskip .1 in 

We now proceed with a case by case analysis.

\un{Case 2-i}:
We are assuming that $E$ is smooth.  Consequently,  $E_{\Sigma}=E$ and $L_{|E_{\Sigma}}=I_E\in \Pic^{d_E}(E)$.
We can identify $E$ with $\Pic^{d_E}(E)$ sending $r\in E$ into
$\OO_{E}(r+(d_E-1)p)\in \Pic^{d_E}(E)$.
Since $\phi$ acts on $\Pic^{d_E}(E)$ via pull-back, if
the action of $\phi$ on $T_p(E)$ is given by the multiplication by a root of unity $\zeta$, then the action of $\phi$ on $T_{I_{E}}(\Pic^{d_E}(E))$
is given by the multiplication by $\zeta^{-1}$.  In other words, if the primitive $n$-th root of unity $\zeta$ is chosen for the matrix $M(\phi)$ from \eqref{E:M-matrix}, then here the action is given by the primitive $n$-th root of unity $\zeta^{-1}$.  
Therefore the matrix $N(\phi)$ of $\phi$ with respect to the decomposition \eqref{E:decL} is equal to (with respect to the same choice of the primitive $n$-th root of unity $\zeta$ as in the above matrix $M(\phi)$):
\begin{equation}\label{E:N-matrix}
N(\phi)=\begin{cases}
\left(
  \begin{array}{cc}
    \zeta^{1} &    \\
     &   \II  \\
  \end{array}
\right)  & \text{ if } n=2,\\
\left(
  \begin{array}{cc}
    \zeta^{3} &   \\
     &   \II  \\
  \end{array}
\right)  \text{ or }
\left(
  \begin{array}{cc}
    \zeta^{1} &   \\
     &   \II  \\
  \end{array}
\right)
& \text{ if } n=4,\\
\left(
  \begin{array}{cc}
    \zeta^{2} &   \\
     &   \II  \\
  \end{array}
\right)  \text{ or }
\left(
  \begin{array}{cc}
    \zeta^1 &    \\
     & \II  \\
  \end{array}
\right)
& \text{ if } n=3,\\
\left(
  \begin{array}{cc}
    \zeta^1 &   \\
     &  \II  \\
  \end{array}
\right)  \text{ or }
\left(
  \begin{array}{cc}
    \zeta^5 &   \\
     &  \II  \\
  \end{array}
\right)
& \text{ if } n=6,\\
\end{cases}
\end{equation}
where $\II$ is the suitable identity matrix.  Note that the first matrix in each row above corresponds to the first matrix in the corresponding row of \eqref{E:M-matrix}.    The matrix $R(\phi)$ describing the action of $\phi$ on the vector space $V$  \eqref{eqnV} contains the upper left $2\times 2$ sub-matrix of $M(\phi)$ from  \eqref{E:M-matrix} and the upper left $1\times 1$ sub-matrix of $N(\phi)$ from  \eqref{E:N-matrix} as block factors. An easy inspection of the matrices $M(\phi)$ and $N(\phi)$  reveals that the condition \eqref{eqnRSBT} of the Reid--Tai criterion
is satisfied, which shows that $V/\langle \phi\rangle$ has canonical singularities, as we wanted.

\vskip .1 in 
\un{Case 2-ii}: 
In this case we are assuming that  $E$ is a rational elliptic curve with one node $q$ and that $I$ is locally free at $q$.
Then also in this case $E_{\Sigma}=E$ and $L_{|E_{\Sigma}}=I_E\in \Pic^{d_E}(E)$. Moreover, we have that $\Pic^{d_E}(E)\cong \Gm$.
Explicitly, if we consider the normalization morphism $\nu:E^{\nu}\cong \bbP^1\to E$ and let $\nu^{-1}(q)=\{u,v\}$, then
any $\lambda\in \Gm$ determines a unique line bundle $L_{\lambda}\in \Pic^{d_E}(E)$ whose local sections
are the local sections $s$ of $\OO_{\bbP^1}(d_E)$ such that $s(u)=\lambda s(v)$.
Since, as observed before, $\phi_{|E}$ is induced by an involution of $E^{\nu}$ that exchanges $u$ and $v$, then clearly $\phi$ will send $L_{\lambda}$ into $L_{\lambda^{-1}}$.  This implies that the action of $\phi$ on $T_{I_{E}}(\Pic^{d_E}(E))$ is given by multiplication by $-1$, hence the matrix $N(\phi)$ is also in this case given by  \eqref{E:N-matrix} with $n=2$.

Therefore the matrix $R(\phi)$ describing the action of $\phi$ on the vector space $V$ contains the upper left $2\times 2$ sub-matrix $M(\phi)$ from \eqref{E:M-matrix} and the upper left $1\times 1$ sub-matrix of $N(\phi)$ from \eqref{E:N-matrix} as block factors, and we conclude as in the previous case  that $V/\langle \phi\rangle$ has canonical singularities, as we wanted.

\vskip .1 in 
\un{Case 2-iii}: 
In this case $E$ is a rational elliptic tail with one node $q$,  and  $I$ is not locally free at $q$. Observe that in this case $E_{\Sigma}=\bbP^1$ so that $T  \Def_{L_{|E_{\Sigma}}}=0$ and hence the action of $\phi$ on $T \Def_L$ is trivial. To proceed in this case we  consider instead the action of $\phi$ on $T U(\Gamma_E)$, which is a two-dimensional $k$-vector space since $U(\Gamma_E)=k[X_q,Y_q]$ by \eqref{E:UE}. Geometrically, the variables $X_q$ and $Y_q$ correspond to deforming the sheaf $I$ at $q$ along the two branches of $q$ (see \cite[\S 3]{CMKVb} for more details).
Since, as observed before, $\phi_{|E}$ is induced by an involution of the normalization $\nu:E^{\nu}\to E$ that exchanges the two branches above $q$, then $\phi$ acts on $U(\Gamma_E)=k[X_q,Y_q]$ by exchanging $X_q$ with $Y_q$. Therefore, we can diagonalize the action of $\phi$ on $T U(\Gamma_E)=\langle X_q^\vee, Y_q^\vee\rangle$ by choosing the basis $\{X_q^\vee-Y_q^\vee, X_q^\vee+Y_q^\vee\}$ in such a way that the matrix $P(\phi)$ describing the action of $\phi$ is equal to
\begin{equation}\label{E:P-matrix}
P(\phi)=
\left(
  \begin{array}{cc}
    -1 & 0   \\
     0 &   1  \\
  \end{array}
\right).
\end{equation}
Therefore, since the matrix $R(\phi)$ describing the action of $\phi$ on the vector space $V$ contains the upper left $2\times 2$ sub-matrix of $M(\phi)$ from \eqref{E:M-matrix} with $n=2$ and the matrix $P(\phi)$ of \eqref{E:P-matrix} as block factors, an easy inspection of the matrices $M(\phi)$ and $P(\phi)$  reveals that the condition \eqref{eqnRSBT} of the Reid--Tai criterion is satisfied also in this case, which shows that $V/\langle \phi\rangle$ has canonical singularities, as we wanted.     
\end{proof}

 Theorem \ref{T:can-sing} was  proved by Bini--Fontanari--Viviani in \cite{BFV} under
the assumption that $\gcd(d+1-g,2g-2)=1$, which is exactly the numerical condition on $d$ and $g$ that guarantees that $\bar J_{d,g}$ has finite quotient singularities. When this happens, one can prove  Theorem \ref{T:can-sing} by a direct
application of the Reid--Tai criterion  (see \cite[Thm.~4.8]{BFV}).

\begin{rem}\label{R:Q-Gor}
It follows from Theorem \ref{T:can-sing} that $\bar J_{d,g}$ is $\bbQ$-Gorenstein. Indeed, more is true: Fontanari showed in \cite{fon} that $\bar J_{d,g}$ is $\bbQ$-factorial. 
\end{rem}

We end this subsection with a description of the locus where $\bar J_{d,g}$ has  finite quotient singularities or
 is smooth.

\begin{pro}\label{P:smoothloc}
Let $(C,I)\in \bar J_{d,g}$ and assume that $\Aut(C)$ does not contain elements of order equal to $p={\rm char}(k)$.
Then
\begin{enumerate}[(i)]
\item \label{P:smoothloc1} $\bar J_{d,g}$ has finite quotient singularities at $(C,I)$ if and only if $\Gamma_{(C,I)}$ is tree-like, i.e.~it becomes a tree after removing all the loops around its vertices.
\item \label{P:smoothloc2} If $g\geq 4$ then $\bar J_{d,g}$ is smooth at $(C,I)$ if and only if $\Gamma_{(C,I)}$ is tree-like and $\Stab_C(I)=\{\Id\}$.
\end{enumerate}
\end{pro}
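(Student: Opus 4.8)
The plan is to read both assertions off the local description of $\bar J_{d,g}$ provided by Theorem \ref{T:loc-ring} together with Theorem \ref{T:pres-quot}. Fix $(C,I)$ and set $\Gamma:=\Gamma_{(C,I)}$. The hypothesis on $\Aut(C)$ ensures that $\Aut(C,I)$ is linearly reductive, so (Theorem \ref{T:loc-ring} and \eqref{E:par-inv}) the completed local ring $\wh\OO_{\bar J_{d,g},(C,I)}$ is the completion of
$$\left(R_{(C,I)}^{\Aut(I)}\right)^{\Stab_C(I)}=\left(U(\Gamma)\otimes_k k[T^{\vee}\Def_{(C,I)}^{\rm l.t.}]\right)^{\Stab_C(I)}$$
at its maximal ideal; in other words, locally analytically at $(C,I)$ the variety $\bar J_{d,g}$ is the quotient of $X_\Gamma\times\bbA_k^N$, with $N:=\dim_k T\Def_{(C,I)}^{\rm l.t.}$, by the finite group $\Stab_C(I)$. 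Since ``has finite quotient singularities'' and ``is smooth'' are both invariant under localization and completion, it suffices to analyze this quotient.

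Part \eqref{P:smoothloc1}. If $\Gamma$ is tree-like, then $X_\Gamma$ is an affine space by Proposition \ref{P:smooth}, so $X_\Gamma\times\bbA_k^N$ is smooth and $\bar J_{d,g}$ is, locally, a finite quotient of a smooth variety, hence has finite quotient singularities at $(C,I)$. For the converse I would use the characterization (valid in characteristic zero) that a normal variety has a finite quotient singularity at a point exactly when, locally analytically, its universal cover in codimension one is smooth; equivalently, when it admits a finite surjective cover by a smooth variety étale in codimension one. Assuming $\bar J_{d,g}$ has finite quotient singularities at $(C,I)$, the smooth universal codimension-one cover $W$ of $\Spec\wh\OO_{\bar J_{d,g},(C,I)}$ pulls back, along $X_\Gamma\times\bbA_k^N\to\Spec\wh\OO_{\bar J_{d,g},(C,I)}$, to a finite quasi-étale cover of $X_\Gamma\times\bbA_k^N$; comparing with the universal quasi-étale cover of the \emph{toric} variety $X_\Gamma$, which is again the toric variety of the \emph{same} cone $\sigma_\Gamma^{\vee}$ for a sublattice, this cover is dominated by $\widetilde{X_\Gamma}\times\bbA_k^N$. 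Purity of the branch locus (Zariski--Nagata) then forces $\widetilde{X_\Gamma}$ to be smooth; but when $\Gamma$ is not tree-like the cone $\sigma_\Gamma^{\vee}$ is not simplicial (this is exactly the ray count in the proof of Proposition \ref{P:smooth}, together with Lemma \ref{L:sep-edge}), so $\widetilde{X_\Gamma}$ is not even $\bbQ$-factorial, a contradiction; hence $\Gamma$ is tree-like. The delicate point here is the possible presence of pseudo-reflections in the $\Stab_C(I)$-action on $X_\Gamma\times\bbA_k^N$, since then $X_\Gamma\times\bbA_k^N\to\Spec\wh\OO_{\bar J_{d,g},(C,I)}$ need not be étale in codimension one; I would dispose of this first by passing to the quotient of $X_\Gamma\times\bbA_k^N$ by the subgroup generated by those pseudo-reflections — a quasi-étale cover of $\Spec\wh\OO_{\bar J_{d,g},(C,I)}$, hence again a finite quotient singularity — and relating its universal codimension-one cover to that of $X_\Gamma$.

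Part \eqref{P:smoothloc2}. If $\bar J_{d,g}$ is smooth at $(C,I)$, then by part \eqref{P:smoothloc1} the graph $\Gamma$ is tree-like, so $\Spec\wh\OO_{\bar J_{d,g},(C,I)}$ is the quotient of the \emph{smooth} variety $X_\Gamma\times\bbA_k^N$ by the finite group $\Stab_C(I)$ acting linearly on the tangent space. By the Chevalley--Shephard--Todd theorem such a quotient is smooth if and only if $\Stab_C(I)$ is generated by pseudo-reflections, so it is enough to show that for $g\ge4$ no nontrivial $\phi\in\Stab_C(I)$ acts as a pseudo-reflection on $X_\Gamma\times\bbA_k^N$; then, being generated by pseudo-reflections, $\Stab_C(I)$ is trivial. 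This is precisely the eigenvalue bookkeeping already carried out in the proof of Theorem \ref{T:can-sing}: invoking Theorem \ref{teoHML}, either $\phi$ does not act as a pseudo-reflection on $\Spec R_C$ and $\Spec R_C/\langle\phi\rangle$ is canonical, in which case the age of $\phi$ on $\Spec R_C$ is $\ge1$ and hence $\phi$ has at least two eigenvalues $\neq1$ there and a fortiori on $X_\Gamma\times\bbA_k^N$; or $\phi$ is an elliptic-tail automorphism, and then the matrices $M(\phi)$, $N(\phi)$, $P(\phi)$ of \eqref{E:M-matrix}, \eqref{E:N-matrix}, \eqref{E:P-matrix} — recording the action on $T\Def_C$, on $T\Def_L$, and on $T\,U(\Gamma_E)$ — together exhibit at least two eigenvalues $\neq1$. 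The reverse implication is immediate: if $\Gamma$ is tree-like and $\Stab_C(I)=\{\Id\}$, then by Proposition \ref{P:smooth} the ring $R_{(C,I)}^{\Aut(I)}=U(\Gamma)\otimes_k k[T^{\vee}\Def_{(C,I)}^{\rm l.t.}]$ is a polynomial ring, so $\wh\OO_{\bar J_{d,g},(C,I)}$ is regular.

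The main obstacle is the converse direction of part \eqref{P:smoothloc1}: unlike canonicity or $\bbQ$-factoriality, the property of being a \emph{finite} quotient singularity cannot be detected by discrepancies or by the class group (the latter being finite here in any case), so one must translate it into an intrinsic statement about codimension-one covers and then control how the $\Stab_C(I)$-action, and in particular its pseudo-reflections, interacts with the non-simplicial toric structure of $X_\Gamma$. The computation in part \eqref{P:smoothloc2} is, by contrast, essentially a re-reading of the eigenvalue data assembled in the proof of Theorem \ref{T:can-sing}.
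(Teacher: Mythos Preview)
Your treatment of part \eqref{P:smoothloc2} is essentially the paper's: once $\Gamma$ is tree-like, $\Spec R_{(C,I)}^{\Aut(I)}$ is smooth by Proposition~\ref{P:smooth}, and one shows that no nontrivial $\phi\in\Stab_C(I)$ acts as a pseudo-reflection by reducing (via the $\Stab_C(I)$-equivariant surjection $T\Spec R_{(C,I)}^{\Aut(I)}\twoheadrightarrow T\Spec R_C$) to pseudo-reflections on $\Spec R_C$, invoking Theorem~\ref{teoHML}, and then reading off two nontrivial eigenvalues from the matrices in Case~2 of the proof of Theorem~\ref{T:can-sing}. The paper phrases the conclusion via Prill's theorem rather than Chevalley--Shephard--Todd, and argues the contrapositive (assume $\phi$ is a pseudo-reflection upstairs, deduce it is one on $\Spec R_C$, reach a contradiction), but the content is the same. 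Your detour in ``case~(a)'' through the age inequality is unnecessary: not being a pseudo-reflection on $\Spec R_C$ already means at least two eigenvalues $\neq 1$ there (faithfulness of the $\Aut(C)$-action on $\Def_C$ rules out the trivial case), and these survive in the larger tangent space.

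For part \eqref{P:smoothloc1}, the forward implication is the same in both. For the converse, you are right to be uneasy: the paper's entire argument is the phrase ``it is clear'', and the implication ``$(X_\Gamma\times\bbA^N)/\Stab_C(I)$ has finite quotient singularities $\Rightarrow$ $X_\Gamma$ does'' is \emph{not} automatic. (Indeed, for the quadric cone $Q=X_{I_2}$ and the toric involution swapping the two rulings, $Q/\iota\cong\bbA^3$ is smooth while $Q$ is not even $\bbQ$-factorial; what saves the day is that such an involution, arising from an honest $\phi\in\Stab_C(I)$, must also act nontrivially on the $\bbA^N$ factor.) Your quasi-\'etale cover strategy is the correct one, but you can short-circuit the delicate pseudo-reflection discussion: the Claim in the paper's proof of \eqref{P:smoothloc2} --- that $\Stab_C(I)$ acts on $\Spec R_{(C,I)}^{\Aut(I)}$ without pseudo-reflections --- is proved there (for $g\ge 4$) without ever using that $\Gamma$ is tree-like; the reduction to pseudo-reflections on $\Spec R_C$ only uses the equivariant surjection of tangent spaces and the equidimensionality of the fibers of $\Spec R_{(C,I)}^{\Aut(I)}\to\Spec R_C$. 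Granting that Claim, the quotient map $X_\Gamma\times\bbA^N\to (X_\Gamma\times\bbA^N)/\Stab_C(I)$ is \'etale in codimension one, and then your purity argument runs cleanly: pull back a smooth quasi-\'etale cover of the quotient, apply Zariski--Nagata over the smooth base to get a smooth quasi-\'etale cover of $X_\Gamma\times\bbA^N$, and conclude $X_\Gamma$ has finite quotient singularities, hence $\Gamma$ is tree-like by Proposition~\ref{P:smooth}. So: same endpoint as the paper, but you have correctly located (and, with this one addition, filled) a gap that the paper glosses over.
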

\begin{proof}
Part \eqref{P:smoothloc1}: using the presentation of the complete local ring of
$\bar J_{d,g}$ at $(C,I)$ given in Theorem \ref{T:loc-ring}, it is clear that $\bar J_{d,g}$ has finite quotient
singularities at $(C,I)$ if and only if $X_{\Gamma}=\Spec U(\Gamma)$  has finite quotient singularities, where
$\Gamma=\Gamma_{(C,I)}$. Proposition \ref{P:smooth} says that this is the case if and only if $\Gamma$ is tree-like, q.e.d.

Part \eqref{P:smoothloc2}: using Theorem \ref{T:loc-ring}, the smoothness of $\bar J_{d,g}$ at $(C,I)$ is equivalent to the smoothness of the quotient $\Spec \left(R_{(C,I)}^{\Aut(I)}\right)/$ $\Stab_C(I)$.
By part \eqref{P:smoothloc1}, we must have that $\Gamma=\Gamma_{(C,I)}$ is tree-like.
In this case, $\Spec \left(R_{(C,I)}^{\Aut(I)}\right)=X_{\Gamma}\times \Spec k[T^{\vee} \Def_{(C,I)}^{\rm l.t.}]$ is smooth by Proposition \ref{P:smooth}.

\un{Claim}: The finite group $\Stab_C(I)$ acts on $\Spec \left(R_{(C,I)}^{\Aut(I)}\right)$ without pseudo-reflections.

Indeed, consider the morphism $\Spec \left(R_{(C,I)}^{\Aut(I)}\right)\to \Spec R_C$ of smooth varieties. If $1\neq \phi\in \Stab_C(I)$ acts as a pseudo-reflection on $\Spec \left(R_{(C,I)}^{\Aut(I)}\right)$ then $\phi$ acts as a pseudo-reflection on $\Spec R_C$. It is well-known that this happens if and only if $C$ has an elliptic tail $E$ and $\phi$ is the elliptic tail involution, i.e.~$\phi_{|E^c}=\id_{E^c}$ and $\phi_E$ is the elliptic involution on $E$ (see Theorem \ref{teoHML}).   
This situation is a special case of the situation we dealt with in Case II of the proof of Theorem \ref{T:can-sing}, where in particular we verified that the age of $\phi$ (with respect to its action on $\Spec \left(R_{(C,I)}^{\Aut(I)}\right)$ 
and any primitive root of unity) is at least one. 
This easily implies that $\phi$ is not a pseudo-reflection because clearly any non trivial pseudo-reflection has age less than one since it has a unique eigenvalue different from one.

\vspace{0.2cm}

Using the Claim, we conclude the proof using a classical result of Prill \cite{Pri}, which says that 
for a finite group $G$ acting on a smooth variety $X$ without pseudo-reflections, the quotient $X/G$ is smooth if and only if $G$ is the trivial group.
\end{proof}

Part \eqref{P:smoothloc2} of Proposition \ref{P:smoothloc} generalizes \cite[Prop.~4.7]{BFV}, where the statement is proved under the assumption that $(C,I)$ belongs to the stable locus of $\bar J_{d,g}$, i.e.~$I$ is stable with respect to $\omega_C$.

\begin{rem}
From Proposition \ref{P:smoothloc}\eqref{P:smoothloc1}, it follows that the locus where $\bar J_{d,g}$ has finite quotient singularities is, in general, strictly bigger than:
\begin{itemize}
\item The stable locus of $\bar J_{d,g}$, which coincides with the locus of points $(C,I)$ such that
$\Aut(I)=\Gm$, or equivalently $\Gamma_{(C,I)}$ has a unique vertex.
\item The locus where the fibers of the morphism $\bar J_{d,g}\to \Mgbar$ have finite quotient singularities, which coincides with the locus of points $(C,I)$ where $I$ fails to be locally free only at separating nodes of $C$, or equivalently where $\Gamma_{(C,I)}$ is a tree (see \cite[Thm.~B]{CMKVb}).
\end{itemize}
\end{rem}

\subsection{Birational geometry of $\bar J_{d,g}$}\label{SS:bir-Jac}

The Kodaira dimension of $J_{d,g}$ was computed by Bini--Fontanari--Viviani in \cite{BFV} under the numerical
assumption that $\gcd(d+1-g,2g-2)=1$ (or $g\ge 22$; see Remark \ref{remUK}).
However,  the only place where the authors of \emph{loc.~cit.}~need the hypothesis that $\gcd(d+1-g,2g-2)=1$ is to establish that $\bar J_{d,g}$ has canonical singularities,  as they observe in the discussion following \cite[Thm.~1.4]{BFV}.
Therefore, as a corollary of \cite{BFV} and Theorem \ref{T:can-sing}, we obtain the following result  describing the Kodaira dimension of $J_{d,g}$.

\begin{cor}\label{C:dim-Kod}
Assume that ${\rm char}(k)=0$. The Kodaira dimension of the universal Jacobian $J_{d,g}$ is given by
$$\kappa(J_{d,g})=
\begin{cases}
-\infty & \text{ if } g\leq 9, \\
0 & \text{ if } g=10, \\
19 & \text{ if } g= 11, \\
3g-3 & \text{ if } g\geq 12.
\end{cases}
$$
\end{cor}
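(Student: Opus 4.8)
The plan is to deduce Corollary~\ref{C:dim-Kod} from the work of Bini--Fontanari--Viviani in \cite{BFV} together with Theorem~\ref{T:can-sing}. Recall that in \cite[Thm.~1.2]{BFV} the formula for $\kappa(J_{d,g})$ asserted above is proved under the numerical hypothesis $\gcd(d+1-g,2g-2)=1$ (or $g=23$), and that, as the authors observe in the discussion following \cite[Thm.~1.4]{BFV}, the sole step of their argument requiring this hypothesis is the one guaranteeing that $\bar J_{d,g}$ has canonical singularities: under the $\gcd$ condition $\bar J_{d,g}$ has finite quotient singularities and the classical Reid--Tai criterion applies directly, whereas for general $d$ the singularities are genuinely worse. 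Theorem~\ref{T:can-sing} removes precisely this restriction: for ${\rm char}(k)=0$ and $g\ge 4$, the variety $\bar J_{d,g}$ has canonical singularities for \emph{every} $d\in\bbZ$. Since the nontrivial assertions of the corollary concern $g\ge 10>4$, Theorem~\ref{T:can-sing} is available throughout.

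First I would make precise why canonical singularities is exactly what is needed. The variety $\bar J_{d,g}$ is normal and $\bbQ$-Gorenstein (Theorem~\ref{T:can-sing}; see also Remark~\ref{R:Q-Gor}) and projective (being constructed as a good quotient in \cite{caporaso}, \cite{Pan}), and $J_{d,g}$ is a dense open subvariety, so by definition $\kappa(J_{d,g})=\kappa(\wt X)$ for any smooth projective $\wt X$ birational to $\bar J_{d,g}$. Choosing a resolution $\rho:\wt X\to \bar J_{d,g}$, the definition of canonical singularities gives $K_{\wt X}=\rho^*K_{\bar J_{d,g}}+E$ with $E$ effective and $\rho$-exceptional, hence $H^0(\wt X,mK_{\wt X})\cong H^0(\bar J_{d,g},mK_{\bar J_{d,g}})$ for all $m\ge 0$, and therefore $\kappa(J_{d,g})=\kappa(\bar J_{d,g},K_{\bar J_{d,g}})$. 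The right-hand side is precisely the quantity that \cite{BFV} computes: using Caporaso's description of $\Pic(\bar J_{d,g})$ they express $K_{\bar J_{d,g}}$ in terms of the standard generators and read off the Iitaka dimension of the associated section ring, a computation that never uses $\gcd(d+1-g,2g-2)=1$. It therefore goes through verbatim and yields $\kappa(J_{d,g})=3g-3$ for $g\ge 12$, $=19$ for $g=11$, and $=0$ for $g=10$.

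For $g\le 9$ the assertion $\kappa(J_{d,g})=-\infty$ follows from the same analysis: the canonical-class computation of \cite{BFV}, again insensitive to the numerical hypothesis, shows $\kappa(\bar J_{d,g},K_{\bar J_{d,g}})=-\infty$ (geometrically, $\bar J_{d,g}$ is uniruled in this range, building on \cite{farkasverra}, and a smooth projective model of a uniruled variety has Kodaira dimension $-\infty$). Finally, for $g=22$ and $g\ge 24$ the equality $\kappa(J_{d,g})=3g-3$ is a consequence of general additivity results in birational geometry applied to the abelian fibration $\bar J_{d,g}\to \ov M_g$ together with the fact that $\ov M_g$ is of general type (see Remark~\ref{remUK}), so these cases do not require the present paper; what is genuinely new is that Theorem~\ref{T:can-sing} handles the remaining range $10\le g\le 21$ (and $g=23$) for arbitrary $d$, in particular the previously unknown case $d=0$.

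I do not expect a real obstacle beyond Theorem~\ref{T:can-sing} itself: once canonical singularities are known, Corollary~\ref{C:dim-Kod} is a formal consequence of \cite{BFV}. The one point that needs a line of care is verifying that $\bar J_{d,g}$ is normal, $\bbQ$-Gorenstein and projective, so that $\kappa(\bar J_{d,g},K_{\bar J_{d,g}})$ is well posed and a birational invariant --- normality and $\bbQ$-Gorensteinness follow from Theorem~\ref{T:can-sing} and the explicit local models of \S\ref{S:univJac}, and projectivity from \cite{caporaso}.
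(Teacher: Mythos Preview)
Your proposal is correct and follows essentially the same route as the paper: invoke Theorem~\ref{T:can-sing} to obtain canonical singularities of $\bar J_{d,g}$, deduce $\kappa(J_{d,g})=\kappa(\bar J_{d,g},K_{\bar J_{d,g}})$, and then use the computation of \cite{BFV} (which gives $K_{\bar J_{d,g}}=\pi^*(14\lambda-2\delta)$ and reduces to the Iitaka dimension of $14\lambda-2\delta$ on $\ov M_g$). One small correction: for $g\le 9$ the paper appeals to Verra's unirationality result \cite{Ver} rather than to \cite{farkasverra}, and this step needs neither canonical singularities nor the canonical-class computation.
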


\begin{proof}  We sketch the proof for the convenience of the reader.
Verra has shown that $J_{d,g}$ is unirational for $g\leq 9$ (\cite[Thm.~1.2]{Ver}).   So let us consider the case where $g\ge 10$.
Let $\pi:\bar J_{d,g}\to \overline M_g$ be the natural forgetful map.  Using Grothendieck--Riemann--Roch, it is shown in \cite[Thm.~1.5]{BFV} that for $g\ge 4$, $K_{\bar J_{d,g}}=\pi^*(14\lambda -2\delta)$ ($=\pi^*K_{\overline M_g}+\pi^*\lambda$, agreeing with  the naive computation  over $M_g^\circ$).  As $\pi$ has connected fibers, the Iitaka dimension of  $K_{\bar J_{d,g}}$ and $14\lambda -2\delta$ are the same.
The Iitaka dimension of $14\lambda -2\delta$ is by now well known: $\kappa(14\lambda -2\delta)=0$  if $g=10$,
$\kappa(14\lambda -2\delta)=19$ if $ g= 11$ and $\kappa(14\lambda -2\delta)=
3g-3$ if $  g\geq 12$.    (Recall that for $g\ge 13$, work of Eisenbud, Harris and Mumford \cite{HM, EH} shows that the slope of $\overline M_g$ satisfies $s(\overline M_g)<7$, and recent work of Cotterill \cite{cotterill12} shows the same holds for $g=12$.  For $g=10, 11$, work of Tan \cite{tan98} and Farkas--Popa \cite{FP05} shows that $s(\overline M_g)$=7; in these cases $\kappa(14\lambda -2\delta)$ is worked out directly in 
\cite[\S 6]{BFV}.)
Finally, since in Theorem \ref{T:can-sing} we have shown that
$\bar J_{d,g}$   has canonical singularities, we can conclude that $\kappa(\bar J_{d,g})=\kappa(K_{\bar J_{d,g}})$, completing the proof.
\end{proof}

\begin{rem}\label{remUK}
From general results of Ueno   \cite[Thm.~6.12]{ueno75}  and Kawamata  \cite[Cor.~1.2]{kawamata85},  using the fact that the Kodaira dimension of an abelian variety is zero, one obtains the estimate on the Kodaira dimension:
$\kappa(\overline M_g)\le \kappa(\bar J_{d,g})\le \dim \overline M_g$.  By virtue of the results of Harris--Mumford, Eisenbud--Harris and Farkas, that $\overline M_g$ is of general type  for $g=22$, $g\ge 24$, one obtains immediately that $\kappa(\bar J_{d,g})=\kappa(\overline M_g)=3g-3$ for $g$ in this range.
\end{rem}

\begin{rem}
Since the generic fiber of $\pi:\bar J_{d,g}\to \overline M_g$ has trivial canonical bundle, it is interesting to compare the Kodaira dimensions of the two spaces.  For the convenience of the reader, in the table below we compile the current state of the art on the Kodaira dimension of $\overline M_g$ (we refer the reader to Farkas \cite{far2} for references), and compare it with the Kodaira dimension of $\bar J_{d,g}$.  
\end{rem}

\begin{equation}\label{eqnKDC}
\begin{array}{c|c|c|c|c|c|c|c|c|c|c}
&g\le 7&8&9&10&11&12\le g\le 16&17\le g\le 21&22&23&24\le g\\ \hline
\kappa(\overline M_g)&-\infty&-\infty&-\infty&-\infty&-\infty&-\infty&\text{unknown}&3g-3&\geq 2 &3g-3\\
 \kappa(\bar J_{d,g})&-\infty&-\infty&-\infty&0&19&3g-3&3g-3&3g-3&3g-3&3g-3\\\hline \hline
 \kappa({\mathcal S}_g^-)&-\infty&-\infty&-\infty&-\infty &-\infty &3g-3&3g-3&3g-3&3g-3&3g-3\\
  \kappa({\mathcal S}_g^+)&-\infty&0&3g-3&3g-3 &3g-3 &3g-3&3g-3&3g-3&3g-3&3g-3\\
\end{array}
\end{equation}

\begin{rem}\label{remFVthch}
In recent work Farkas--Verra \cite{Far-theta, FVNik, farkasThCh, farkasverraOddThCh}  have computed the Kodaira dimension of the moduli of spin curves; i.e., the moduli of pairs consisting of a curve together with a theta characteristic.  For each $g\ge 2$, the space has two components, $\mathcal S_g^+$ and $\mathcal S_g^-$ corresponding to the even and odd theta characteristics.  Since these sit inside $ J_{g-1,g}$, \'etale over $M_g$, we find it interesting to compare the Kodaira dimensions of these spaces   \eqref{eqnKDC}.   It turns out, for instance, that both $\bar J_{d,g}$ and $\mathcal S_g^-$ attain ``maximal'' Kodaira dimension at $g=12$.  
\end{rem}

In \cite[Prop.~6.3, Prop.~6.5]{BFV} the Iitaka fibration of the canonical class  $K_{\bar J_{d,g}}$ is established  for $g\ge 10$.  This provides the Iitaka fibration for $ J_{d,g}$ under the additional hypothesis that $\bar J_{d,g}$ has canonical singularities.      
Consequently,  \cite{BFV} have determined  the Iitaka fibration for $J_{d,g}$ assuming that  $\gcd(d+1-g,2g-2)=1$ (and also for $g\ge 22$ using a different argument; see \cite[Prop.~3.2]{BFV}).  As a consequence of Theorem \ref{T:can-sing}, we obtain the following result, generalizing those of \cite{BFV}.

\begin{cor}\label{C:ItFi} For $g\ge 10$, the Iitaka fibration of $ J_{d,g}$ is given as follows:
\begin{enumerate}
\item For $g\ge 12$, the Iitaka fibration is the forgetful  morphism $\pi:\bar J_{d,g}\to \overline M_g$.

\item For $g=11$, the Iitaka fibration is the rational map $\bar J_{d,11}\dashrightarrow \mathcal F_{11}$, where $\mathcal F_{g}$ is the moduli of K3 surfaces with polarization of degree $2g-2$, and the rational map takes a  general pair $(C,L)$ to the pair $(S,\mathcal O_S(C))$, where $S$ is the unique K3 containing $C$ (see \cite{mukai96}).
  
\item For $g=10$, the Iitaka fibration is the structure  morphism $\bar J_{d,10}\to \operatorname{Spec} k$.  
\end{enumerate}
\end{cor}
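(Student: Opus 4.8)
The plan is to reduce, exactly as in the proof of Corollary \ref{C:dim-Kod}, to the description of the Iitaka fibration of the canonical divisor $K_{\bar J_{d,g}}$ obtained by Bini--Fontanari--Viviani in \cite[Prop.~6.3, Prop.~6.5]{BFV}. The only place in which \cite{BFV} uses the hypothesis $\gcd(d+1-g,2g-2)=1$ is to guarantee that $\bar J_{d,g}$ has canonical (in particular $\bbQ$-Gorenstein) singularities; this is now established unconditionally in Theorem \ref{T:can-sing}, which is the sole new ingredient needed.

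First I would observe that, since $\bar J_{d,g}$ has canonical singularities and $J_{d,g}$ is a dense open subvariety of $\bar J_{d,g}$, the pluricanonical section ring $\bigoplus_{m\ge 0}H^0(\bar J_{d,g},mK_{\bar J_{d,g}})$ coincides with that of any resolution of singularities, and hence computes the Iitaka fibration of $J_{d,g}$; so the Iitaka fibration of $J_{d,g}$ is represented by the Iitaka fibration of the divisor $K_{\bar J_{d,g}}$ on $\bar J_{d,g}$. By \cite[Thm.~1.5]{BFV} one has $K_{\bar J_{d,g}}=\pi^*(14\lambda-2\delta)$ for $g\ge 4$, where $\pi\colon\bar J_{d,g}\to\overline M_g$ is the forgetful morphism; and since $\pi$ is proper with geometrically connected fibres between normal varieties, $\pi_*\mathcal O_{\bar J_{d,g}}=\mathcal O_{\overline M_g}$, so the projection formula gives $H^0(\bar J_{d,g},m\pi^*(14\lambda-2\delta))\cong H^0(\overline M_g,m(14\lambda-2\delta))$ for every $m\ge 0$. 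Therefore the Iitaka fibration of $K_{\bar J_{d,g}}$ is the composition of $\pi$ with the Iitaka fibration of $14\lambda-2\delta$ on $\overline M_g$.

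It then remains to quote the computation of the latter for $g\ge 10$ from \cite[\S 6]{BFV}. For $g\ge 12$ the class $14\lambda-2\delta$ has maximal Iitaka dimension $3g-3$ (recalled in the proof of Corollary \ref{C:dim-Kod}), so its Iitaka fibration on $\overline M_g$ is birational to the identity and the Iitaka fibration of $J_{d,g}$ is $\pi$. For $g=11$ the Iitaka fibration of $14\lambda-2\delta$ on $\overline M_{11}$ is the Mukai map $\overline M_{11}\dashrightarrow\mathcal F_{11}$ sending a general curve $C$ to the unique polarized K3 surface containing it (\cite{mukai96}); composing with $\pi$ yields the rational map $(C,L)\mapsto(S,\mathcal O_S(C))$. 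For $g=10$ one has $\kappa(14\lambda-2\delta)=0$, so the Iitaka fibration is the structure morphism to $\operatorname{Spec}k$. With Theorem \ref{T:can-sing} in hand there is no serious obstacle remaining: the only things left to check are the equality $\pi_*\mathcal O_{\bar J_{d,g}}=\mathcal O_{\overline M_g}$ and the elementary fact that pulling back a divisor from the base of a fibration with connected fibres composes its Iitaka fibration with the fibration map, both of which are routine.
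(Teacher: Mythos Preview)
Your argument is correct, and its overall structure---use Theorem~\ref{T:can-sing} to identify the Iitaka fibration of $J_{d,g}$ with the Iitaka fibration of the divisor $K_{\bar J_{d,g}}$, then invoke \cite{BFV}---is the same as the paper's. The execution differs somewhat, however. For $g\ge 12$ (and $g=10$) the paper passes to smooth models $\tilde J_{d,g}\to \widetilde M_g$ and invokes Ueno's theorem \cite[Thm.~6.11]{ueno75} on algebraic fiber spaces: since $\kappa(\tilde J_{d,g})=\dim\widetilde M_g$ and the general fiber is an abelian variety (hence $\kappa=0$), the fibration $\tilde\pi$ is the Iitaka fibration. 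You instead run the projection formula on $K_{\bar J_{d,g}}=\pi^*(14\lambda-2\delta)$ together with $\pi_*\OO_{\bar J_{d,g}}=\OO_{\Mgbar}$ to identify the section ring directly with that of $14\lambda-2\delta$ on $\Mgbar$, and then quote the Iitaka fibration of the latter from \cite[\S 6]{BFV}. This is the same computation that underlies the proof of Corollary~\ref{C:dim-Kod}, so your route is arguably more elementary and self-contained; the paper's route, by contrast, does not need the explicit canonical-class formula to produce the factorization, only the numerical equality $\kappa(\bar J_{d,g})=\dim\Mgbar$. For $g=11$ both proofs simply cite \cite[Prop.~6.5]{BFV}. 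Either way, the new input is exactly Theorem~\ref{T:can-sing}, as you say.
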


\begin{proof}  We sketch the proof for the convenience of the reader.  
For $g\ge 12$, this follows from Theorem \ref{T:can-sing} and \cite[Thm.~6.11]{ueno75}.  Indeed, let $\widetilde M_g$ be a resolution of singularities  of $\overline M_g$, and let $\tilde J_{d,g}$ be a resolution of singularities of the fiber product $\bar J_{d,g}\times_{\overline M_g} \widetilde M_g$.  Then   the morphism $\tilde \pi: \tilde  J_{d,g}\to \widetilde M_g$  of smooth projective varieties is an
algebraic fiber space such that $\dim  \widetilde M_g = \kappa (\tilde J_{d,g})$ and the generic fiber
$\tilde \pi^{-1}(C) = J^dC$ is smooth and irreducible of Kodaira dimension zero.    The same argument works for $g=10$, using a desingularization $\tilde J_{d,10}$ of $\bar J_{d,10}$.    For $g=11$, we refer the reader to  \cite[Prop.~6.5]{BFV}, where it is shown that the rational map $\bar J_{d,11}\dashrightarrow \mathcal F_{11}$ is the Iitaka fibration for $K_{\bar J_{d,11}}$.  Since $\bar J_{d,g}$ has canonical singularities by Theorem \ref{T:can-sing}, it follows that this rational map is the Iitaka fibration for $J_{d,11}$. 
\end{proof}

In the last section of \cite{BFV}, the authors investigate the birational maps among the different universal Jacobians
$J_{d,g}$, as $d$ varies. Using Theorem \ref{T:can-sing}, we can relax their hypothesis (see the discussion at the end
of \cite[\S7]{BFV}).

\begin{cor}\label{C:bir-Jac}
Assume that ${\rm char}(k)=0$ and that $g\geq 12$. If $\eta: J_{d,g}\dashrightarrow J_{d',g}$ is a birational map then
$d'=\pm d+n(2g-2)$ and $\eta$ is given by the map sending $(C,L)\in J_{d,g}$ into
$(C,L^{\pm 1}\otimes \omega_C^n)\in J_{d',g}$. In particular:
\begin{enumerate}[(i)]
\item $J_{d,g}$ is birational to $J_{d',g}$ if and only if $d'\equiv \pm d \mod 2g-2$.
\item The group $\Bir(J_{d,g})$ of birational automorphisms of $J_{d,g}$ is given by
$$\Bir(J_{d,g})=
\begin{cases}
\bbz/ 2\bbz & \text{ if } d=n(g-1)  \text{ for some } n\in \bbz,\\
\{\Id \} & \text{ otherwise.}
\end{cases}
$$
Moreover, if $d=n(g-1)$ for some $n\in \bbz$ then the generator of $\Bir(J_{d,g})$ is the birational automorphism sending $(C,L)$ into $(C, L^{-1}\otimes \omega_C^n)$.
\end{enumerate}
\end{cor}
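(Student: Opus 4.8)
My plan is to reduce the statement to the corresponding result of Bini--Fontanari--Viviani \cite[\S7]{BFV}, the point being that the \emph{only} place in \emph{loc.\ cit.}\ where the numerical hypothesis $\gcd(d+1-g,2g-2)=1$ is used is to guarantee that $\bar J_{d,g}$ has canonical singularities, and this is now available for all $g\ge 4$ by Theorem \ref{T:can-sing}. I would organize the argument as follows (here $J_{d,g}$ is the universal Jacobian, which for birational purposes may be identified with $\bar J_{d,g}$). Since $g\ge 12$ and ${\rm char}(k)=0$, Corollary \ref{C:ItFi}(1), which uses Theorem \ref{T:can-sing}, identifies the Iitaka fibration of $J_{d,g}$ with the forgetful morphism $\pi\colon \bar J_{d,g}\to \overline M_g$. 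As the Iitaka fibration is a birational invariant, a birational map $\eta\colon J_{d,g}\dashrightarrow J_{d',g}$ induces a commutative square of dominant rational maps whose bottom arrow is a birational self-map $\sigma\in \Bir(\overline M_g)$ (it is invertible because $\eta^{-1}$ induces its inverse). Restricting $\eta$ over the generic point $\Spec K$ of $\overline M_g$, with $K=k(\overline M_g)$, gives a $K$-birational map $\eta_K\colon \Pic^{d}_{\mathcal C/K}\dashrightarrow \Pic^{d'}_{\mathcal C^{\sigma}/K}$, where $\mathcal C$ is the generic curve and $\mathcal C^{\sigma}=\mathcal C\otimes_{K,\sigma^{*}}K$ is its pullback along $\sigma^{*}\in\Aut(K/k)$.

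First I would show $\sigma=\id$. A $K$-birational map between torsors under abelian $K$-varieties of the same dimension is an affine-linear isomorphism by the rigidity of abelian varieties; hence $\eta_K$ yields an isomorphism $J\mathcal C\cong (J\mathcal C)^{\sigma}$ of abelian $K$-varieties. The generic Jacobian satisfies $\End(J\mathcal C)=\bbZ$ (any extra endomorphism would specialize to one on every genus-$g$ curve, contradicting the existence of a curve with $\End=\bbZ$), so $\operatorname{NS}(J\mathcal C)=\bbZ\cdot[\Theta]$; therefore the isomorphism above carries the class of $\Theta$ to that of $\Theta^{\sigma}$ and is an isomorphism of principally polarized abelian varieties. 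Since $\mathcal C$ is not hyperelliptic, the Torelli theorem gives $\mathcal C\cong\mathcal C^{\sigma}$ over $K$, forcing $\sigma^{*}=\id$, i.e.\ $\sigma=\id$.

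With $\sigma=\id$, $\eta_K$ is a $K$-birational self-map $\Pic^{d}_{\mathcal C/K}\dashrightarrow\Pic^{d'}_{\mathcal C/K}$; by rigidity and $\End(J\mathcal C)=\bbZ$ again, it has the form $L\mapsto M\otimes L^{\pm 1}$ for a fixed line bundle $M$ of degree $d'\mp d$ on $\mathcal C$. Franchetta's conjecture (see \cite[\S7]{BFV} and the references therein) gives that the relative Picard group $\Pic(\mathcal C/K)$ is freely generated by the dualizing sheaf $\omega_{\mathcal C/K}$, so $M=\omega_C^{n}$ for some $n\in\bbZ$, whence $d'=\pm d+n(2g-2)$; spreading $\eta_K$ back out over a dense open of $\overline M_g$ then identifies $\eta$ with the map $(C,L)\mapsto(C,L^{\pm1}\otimes\omega_C^{n})$. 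The consequences are then formal bookkeeping: for (i), the displayed maps are isomorphisms between $J_{d,g}$ and $J_{d',g}$ whenever $d'=\pm d+n(2g-2)$, which gives the converse; for (ii), imposing $\pm d+n(2g-2)=d$ forces $n=0$ with the plus sign (the identity) and $d=n(g-1)$ with the minus sign, so $\Bir(J_{d,g})$ is $\bbZ/2\bbZ$, generated by $(C,L)\mapsto(C,L^{-1}\otimes\omega_C^{n})$, exactly when $d=n(g-1)$, and is trivial otherwise.

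The step I expect to require the most care is the rigidity argument together with its descent to $K$: one must ensure that the $K$-birational maps of relative Jacobian torsors genuinely produce isomorphisms of abelian varieties defined over $K$, and that the Torelli and $\End=\bbZ$ inputs correctly pin down both $\sigma$ and the sign ambiguity. All of this is already carried out in \cite[\S7]{BFV} (building on \cite{caporaso}), so the genuinely new contribution here is simply to feed in Theorem \ref{T:can-sing}.
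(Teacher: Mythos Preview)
Your proposal is correct and follows essentially the same route as the paper's proof: use Corollary~\ref{C:ItFi} (which relies on Theorem~\ref{T:can-sing}) to identify $\pi$ with the Iitaka fibration, deduce a commuting square over $\overline M_g$, show the induced self-map of $\overline M_g$ is the identity via Torelli together with $\mathrm{NS}=\bbZ$ for the Jacobian, and then invoke Franchetta to pin down the translation as a power of $\omega$. The only cosmetic difference is that you argue at the generic point of $\overline M_g$ while the paper specializes to a very general curve (citing \cite[Lem.~7.4]{BFV} and \cite[Prop.~3.2.2]{caporasoNotes}); the two viewpoints are interchangeable here.
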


\begin{proof}
We sketch the proof for the convenience of the reader.  
As established in Corollary  \ref{C:ItFi},  for $g\ge 12$, the morphism $\pi:\bar J_{d,g}\to \overline M_g$ is the Iitaka fibration of $J_{d,g}$.      
It follows that any birational automorphism $\eta:\bar J_{d,g}\dashrightarrow \bar J_{d',g}$ induces a commutative diagram of rational maps (see e.g.~\cite[Ch.~II, Thm.~6.11]{ueno75})
$$
\xymatrix{
\bar J_{d,g} \ar@{-->}[r]^\eta \ar@{->}[d]^\pi& \bar J_{d',g} \ar@{->}[d]^{\pi}\\
\overline M_g \ar@{-->}[r]^\xi & \overline M_g\\
}
$$
The rational map $\xi$ is the identity.  Indeed, indeed if $C\in M_g$ is very general, and $C'=\xi(C)$, then there is an induced birational map $J^dC\dashrightarrow J^{d'}C'$.  As this is a birational map of abelian varieties, it is an isomorphism, and one concludes that $C\cong C'$ using the Torelli theorem, and the fact that for a very general curve, the Neron--Severi group of the Jacobian is isomorphic to $ \mathbb Z$ (see \cite[Lem.~7.4]{BFV} for more details). 
Having established that  $\xi$ is the identity, the corollary follows from \cite[Prop.~3.2.2]{caporasoNotes}.  Again, we sketch the proof for the convenience of the reader.  
 Let $U\subseteq M_g^\circ$ be an open set over which $\eta$ is defined.  For each $C\in U$, there is an isomorphism $\eta|_C:J^dC\to J^{d'}C$.    Since an isomorphism of abelian varieties is given by a translation, followed by a group automorphism, and $C$ is automorphism free, then  $\eta|_C(L)=(L\otimes L_C)^{\pm 1}$ for some $L_C\in J^{\pm(d'-d)}C$, depending only on $C$  (see \cite[Lem.~3.2.3, Prop.~3.2.2, p.16]{caporasoNotes} for more details).      The assignment $C\mapsto L_C$ determines a rational section of $J_{\pm (d'-d),g}\to M_g$.   The Franchetta Conjecture (proven by \cite{mestrano87}) asserts that the only such sections are given by pluricanonical bundles.    
\end{proof}

\begin{rem}
It is likely that Corollary \ref{C:bir-Jac} fails for small values of $g$, where it is natural to expect that
$J_{d,g}$ is rational for all values of $d\in \bbz$.
\end{rem}

\appendix

\section{Finite quotients of toric singularities}\label{S:RT-toric}

The aim of this appendix is to study when a finite quotient of a toric singularity is Gorenstein, resp.~terminal, resp.~canonical.  We will work over an algebraically closed field $k$ of characteristic $0$.  The main focus is to generalize the Reid--Tai--Shepherd-Barron criterion for quotients of smooth varieties by finite groups.  We expect these type of results are well-known to the experts, but we were unable to find a reference for the specific results we use, and so we include statements and proofs here.

\subsection{Finite quotient of smooth varieties}\label{SS:smooth}

Let us start by recalling the case of finite quotients of smooth varieties which is well-known and attributed to Khinich, Watanabe, Tai, Reid--Shepherd-Barron and Reid (see e.g.~\cite[Thm.~2.3]{MS} and the references therein).

\begin{teo}\label{T:critsmooth}
Let $G\subseteq \GL_n(k)$  be a finite subgroup and assume that $G$ does not contain pseudo-reflections.  Set $X=\bbA_k^n/G$.     For each $g\in G$ of order $r\ne 1$ and each primitive $r$-th root of unity $\zeta$, write the eigenvalues of $g$ as $\zeta^{a_1},\ldots,\zeta^{a_n}$ with $0\le a_i<r$ and define  the {\rm age}  of $g$ with respect to $\zeta$ as
$$ \age(g,\zeta):= \frac{1}{r}\sum_{i=1}^na_i.$$

\begin{enumerate}
\item \emph{(Khinich and Watanabe)} $X$ is Gorenstein if and only if $G\subseteq \SL_n(k)$; i.e.,
\begin{equation}\label{eqnKW}
 \age(g,\zeta)\in \bbZ
\end{equation}
for each  $1\neq g\in G$ and each (or, equivalently, some) primitive $r$-th root of unity $\zeta$.

\item \emph{(Reid--Shepherd-Barron \cite{Reid} and Tai \cite{tai})}  $X$ is canonical if and only if, in the notation above,
\begin{equation}\label{eqnRSBT}
 \age(g,\zeta) \ge 1
\end{equation}
for each  $1\neq g\in G$  and each primitive $r$-th root of unity $\zeta$.

\item \emph{(Reid \cite{Reid})}  $X$ is terminal if and only if, in the notation above,
\begin{equation}\label{eqnR}
 \age(g,\zeta) >1
\end{equation}
for each  $1\neq g\in G$ and each primitive $r$-th root of unity $\zeta$.
\end{enumerate}

\end{teo}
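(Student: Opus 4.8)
The statement is classical, so the plan is to reproduce the standard argument. I would start from the structural observations that follow from $G$ having no pseudo-reflections: the quotient map $\pi\colon\bbA^n_k\to X=\bbA^n_k/G$ is finite and \'etale in codimension one, so $X$ is normal, is Cohen--Macaulay (a finite quotient of a smooth variety in characteristic zero is a direct summand of a polynomial ring), and satisfies $\pi^*K_X=K_{\bbA^n_k}$ with zero ramification divisor. For part~(1) I would then use that $X$ is Gorenstein if and only if $\omega_X$ is invertible, together with the standard identification $\omega_X\cong(\pi_*\omega_{\bbA^n_k})^G$ (both sheaves are reflexive and agree away from a closed set of codimension $\ge 2$) and the computation of the class of $K_X$ in $\operatorname{Cl}(X)$: $\omega_X$ is invertible exactly when $dx_1\wedge\cdots\wedge dx_n$ is $G$-invariant, i.e.~when $\det(g)=1$ for every $g\in G$, equivalently $G\subseteq\SL_n(k)$, equivalently $\zeta^{a_1+\cdots+a_n}=1$ for every $g$, equivalently $\age(g,\zeta)\in\bbZ$; since $\det(g)=1$ does not depend on the choice of primitive root, this gives~(1).

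For parts~(2) and~(3) the plan is first to reduce to the case where $G=\langle g\rangle$ is cyclic. I would invoke the standard fact that, for a finite surjective morphism of normal varieties that is \'etale in codimension one, the source has canonical (resp.~terminal) singularities if and only if the target does; applied to $\bbA^n_k/\langle g\rangle\to\bbA^n_k/G$ --- which is again finite and \'etale in codimension one, by multiplicativity of ramification indices --- this shows that $X$ is canonical (resp.~terminal) if and only if $\bbA^n_k/\langle g\rangle$ is so for every $g\in G$. Since $\bigcup_{g\in G}\bigl(\langle g\rangle\smallsetminus\{1\}\bigr)=G\smallsetminus\{1\}$, it is then enough to establish the criterion for a single cyclic group.

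So I would take $G=\langle g\rangle$ with $g$ of order $r$, put it in diagonal form $\operatorname{diag}(\zeta^{a_1},\dots,\zeta^{a_n})$ with $0\le a_i<r$, and pass to toric geometry: $X=\bbA^n_k/\langle g\rangle$ is the affine toric variety attached to the orthant $\sigma=\bbR_{\ge 0}^n$ in the lattice $N=\bbZ^n+\bbZ\cdot\tfrac1r(a_1,\dots,a_n)$. This $X$ is $\bbQ$-factorial, and the rational support function of $K_X$ is $v\mapsto -(v_1+\cdots+v_n)$; hence, in any toric resolution refining $\sigma$, the exceptional divisor attached to a primitive ray generator $v=(v_1,\dots,v_n)\in N\cap\sigma$ has discrepancy $(v_1+\cdots+v_n)-1$. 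Therefore $X$ is canonical (resp.~terminal) if and only if $v_1+\cdots+v_n\ge 1$ (resp.~$>1$) for every $v\in(N\cap\sigma)\smallsetminus\bigl(\{0\}\cup\{e_1,\dots,e_n\}\bigr)$. The key point is that every such $v$ dominates coordinatewise the point $v^{(k)}=\tfrac1r(b_1,\dots,b_n)$ lying in the same coset of $\bbZ^n$ in $N$, where $b_i\in[0,r)$ is the residue of $ka_i$ modulo $r$; and $\sum_i v^{(k)}_i=\age(g^k,\zeta)$. So it suffices to test the points $v^{(k)}$ for $k=1,\dots,r-1$, which correspond exactly to the non-identity elements of $\langle g\rangle$, yielding~(2) and~(3). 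Independence of the choice of primitive root follows because $\age(h,\zeta^s)=\age(h^{s^{-1}},\zeta)$ whenever $\gcd(s,\operatorname{ord}(h))=1$, and $h^{s^{-1}}$ again runs through the non-identity elements of $\langle h\rangle$.

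The step I expect to be most delicate to write out carefully is the toric discrepancy computation behind~(2) and~(3): one has to exhibit a smooth refinement of the fan, verify the discrepancy formula for a toric exceptional divisor in the merely $\bbQ$-Gorenstein (as opposed to Gorenstein) situation, and justify that testing the minimal lattice points $v^{(k)}$ is enough. The reduction to cyclic subgroups, though standard, also relies on the invariance of discrepancies under finite quasi-\'etale covers, for which a precise reference should be supplied.
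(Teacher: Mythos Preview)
The paper does not give its own proof of this theorem; it is stated as a classical result with a reference, so there is nothing to compare against directly. I will therefore assess your argument on its own merits.

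Your treatment of part~(1) and of the cyclic case in parts~(2)--(3) via the toric description $\bbA^n_k/\langle g\rangle\cong U_{\sigma,N'}$ with $N'=\bbZ^n+\bbZ\cdot\frac{1}{r}(a_1,\dots,a_n)$ is correct and is the standard route.

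However, your reduction to cyclic subgroups contains a genuine error. The ``standard fact'' you invoke---that for a finite surjective morphism of normal varieties which is \'etale in codimension one, the source has canonical (resp.\ terminal) singularities \emph{if and only if} the target does---is false for canonical and terminal singularities (it holds only for klt and lc). Counterexample: let $\bbZ_3$ act on $\bbA^2_k$ by $(x,y)\mapsto(\zeta x,\zeta y)$; the quotient map is \'etale in codimension one, $\bbA^2_k$ is smooth, yet the $\tfrac{1}{3}(1,1)$ quotient singularity has a unique exceptional divisor of discrepancy $-\tfrac{1}{3}$ and is therefore not canonical. Applied with $g=1$, your argument would force $\bbA^n_k/G$ to be canonical whenever $G$ has no pseudo-reflections, which is absurd.

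The correct reduction (as in Harris--Mumford, Tai, or the paper's Theorem~\ref{T:redcyc}) is more delicate: given a resolution $\widetilde X\to X=\bbA^n_k/G$ and an exceptional divisor $E$ with $a(E,X)<0$, pass to the normalization $\widetilde V$ of $\widetilde X$ in $k(\bbA^n_k)$, choose a prime divisor $F\subset\widetilde V$ over $E$, and observe that its inertia group $C_F\subseteq G$ is cyclic (since $\widetilde V$ is regular at the generic point of $F$). The Hurwitz-type identity $a(E,X)+1=\dfrac{a(F,\bbA^n_k)+1}{|C_F|}$ then gives $a(E',\bbA^n_k/C_F)=a(E,X)<0$ for the image $E'$ of $F$, so $\bbA^n_k/C_F$ is not canonical. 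This singles out the \emph{particular} cyclic subgroup responsible for the bad discrepancy, which is precisely what the blanket quasi-\'etale statement cannot do.
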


\begin{rem}\label{R:psrefle}
Recall that an element $1\neq g\in \GL_n(k)$ is a pseudo-reflection  if its fixed locus ${\rm Fix}(g):=\{x\in \bbA_k^n :  g\cdot x=x\}$  is a divisor inside $\bbA_k^n$. Equivalently, $1\neq g\in \GL_n(k)$ is a pseudo-reflection if and only if
$1$ is an eigenvalue of $g$ with multiplicity equal to $n-1$. In particular, if $1\neq g\in \GL_n(k)$ is a pseudo-reflection,  then $g\not\in \SL_n(k)$.
Note that:
\noindent \begin{enumerate}[(i)]
\item In the above theorem,  if one removes the hypothesis that $G$ has no pseudo-reflections, the conditions \eqref{eqnRSBT} and \eqref{eqnR} still imply canonical and terminal singularities, respectively.
\item If $G\subset \GL_n(k)$ is a finite group, denote by $G_{\rm ps}$ be the normal subgroup of $G$ generated by the pseudo-reflections in $G$. Then  $\bbA_k^n/G_{\rm ps}$ is smooth, i.e.
$\bbA_k^n/G_{\rm ps}\cong \bbA_k^m$ for some $m\leq n$, the quotient group $G/G_{\rm ps}$ acts linearly on $\bbA_k^m$ without pseudo-reflections and $\bbA_k^n/G\cong \bbA_k^m/(G/G_{\rm ps})$ (see \cite[\S 3.18]{Kol}).
Therefore, we can always reduce to the case of finite groups acting without pseudo-reflections.
\end{enumerate}
\end{rem}

\subsection{Notation and background results on toric varieties}\label{SS:not-tor}
We now recall some notation and background results on toric varieties, following \cite{CLS}.

Fix a lattice $N$, i.e.~a free $\mathbb Z$-module of finite rank, and let $M=N^{\vee}$ be its dual lattice.
Given a (convex, rational polyhedral) cone
$$
\sigma\subseteq N\otimes_{\mathbb Z}\mathbb R:=N_{\bbR}
$$
consider its dual cone (which is still convex, rational polyhedral)
$$
\sigma^\vee=\{\lambda \in M_{\mathbb R}:\langle \lambda, n\rangle \ge 0,\  \forall n\in \sigma\}\subset M\otimes_{\mathbb Z}\mathbb R=:M_{\mathbb R}.
$$
The affine toric variety for the torus $\bbT:=\Spec k[M]=\Gm\otimes_{\bbZ} N$ associated to $\sigma\subset N_{\bbR}$ is given  by
$$
U_\sigma=U_{\sigma,N}:=\operatorname{Spec}k[\sigma^\vee \cap M]
$$
where $k[\sigma^\vee\cap M]$ is the affine semigroup $k$-algebra associated to the normal affine semigroup $\sigma^\vee \cap M$ (by Gordon's Lemma, see \cite[Prop.~1.2.17]{CLS}).
Note that the affine toric variety $U_{\sigma, N}$ depends both on the cone $\sigma\subset N_{\bbR}$ and on the lattice $N\subset N_{\bbR}$.

In the sequel, we will use the following notation:

\begin{itemize}
\item $\sigma(1)$ is the set of one dimensional faces of $\sigma$, i.e.~the extremal rays of the cone.

\item Given $\rho\in \sigma(1)$, we set $u_\rho=u_{\rho,N}$ to be the primitive element of $\rho\cap N$.  That is $u_\rho\in \rho\cap N$, and if $u\in \rho\cap N$, then $u=nu_\rho$ for some $n\in \mathbb N$.

\item $\Pi_\sigma=\Pi_{\sigma, N}$ denotes the polytope $\Pi_\sigma=\operatorname{Conv}(0,u_{\rho,N})_{\rho\in \sigma(1)}$; i.e., the convex hull of $0$ and the primitive elements of the extremal rays of $\sigma$, with respect to the lattice $N$.

\end{itemize}
Note that the primitive elements associated to the rays of $\sigma$ depend on the lattice $N$ we are considering; therefore, also the polytope $\Pi_{\sigma,N}$ depends upon the lattice $N$. 

In what follows, we will be using the following basic results on toric singularities.

\begin{pro}[Gorenstein Condition  {\cite[Prop.~8.2.12, Prop.~11.4.11]{CLS}}]  \label{proGC}
In the notation above, the affine toric variety $U_\sigma$ is  Gorenstein if and only if  there exists $m_\sigma \in  M$ such that
$$
\langle m_\sigma ,u_\rho \rangle =1 \text{ for all } \rho \in \sigma (1).
$$
In this case, $U_\sigma$ has canonical singularities.
\end{pro}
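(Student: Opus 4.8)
The plan is to reduce the Gorenstein property to the Cartier property of the canonical divisor, and then to read off the numerical condition from the standard dictionary between torus-invariant Cartier divisors on $U_\sigma$ and the cone $\sigma$. Since every normal toric variety is Cohen--Macaulay (e.g.~\cite[Thm.~9.2.9]{CLS}), the affine toric variety $U_\sigma$ is Gorenstein precisely when its dualizing sheaf is invertible, i.e.~when the canonical Weil divisor $K_{U_\sigma}$ is Cartier. I would take the torus-invariant representative $K_{U_\sigma}=-\sum_{\rho\in\sigma(1)}D_\rho$ and apply the criterion for Cartierness of a torus-invariant divisor on an affine toric variety (see \cite{CLS}): the divisor $\sum_{\rho}a_\rho D_\rho$ is Cartier on $U_\sigma$ if and only if there is an $m\in M$ with $\langle m,u_\rho\rangle=-a_\rho$ for every $\rho\in\sigma(1)$. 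Specializing to $a_\rho=-1$ yields exactly the asserted equivalence: $U_\sigma$ is Gorenstein if and only if there exists $m_\sigma\in M$ with $\langle m_\sigma,u_\rho\rangle=1$ for all $\rho\in\sigma(1)$. This first half is entirely formal once the two cited facts are in place.

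For the second assertion, suppose such an $m_\sigma$ exists. I would pass to a toric resolution $\phi\colon U_{\Sigma'}\to U_\sigma$, where $\Sigma'$ is a smooth refinement of the fan consisting of $\sigma$ together with its faces (toric resolution of singularities). The exceptional divisors of $\phi$ are among the torus-invariant prime divisors $D_{\rho'}$ for $\rho'\in\Sigma'(1)$, and since $K_{U_\sigma}$ is the Cartier divisor determined by $m_\sigma$ — concretely, $K_{U_\sigma}=\operatorname{div}(\chi^{-m_\sigma})$, whose pullback to $U_{\Sigma'}$ is $-\sum_{\rho'\in\Sigma'(1)}\langle m_\sigma,u_{\rho'}\rangle D_{\rho'}$, while $K_{U_{\Sigma'}}=-\sum_{\rho'\in\Sigma'(1)}D_{\rho'}$ — one obtains the discrepancy formula
$$K_{U_{\Sigma'}}=\phi^{*}K_{U_\sigma}+\sum_{\rho'\in\Sigma'(1)}\bigl(\langle m_\sigma,u_{\rho'}\rangle-1\bigr)D_{\rho'},$$
the summand being $0$ for the rays $\rho'\in\sigma(1)$ already present in $\sigma$. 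It then suffices to check $\langle m_\sigma,u_{\rho'}\rangle\ge 1$ for every $\rho'\in\Sigma'(1)$. Here the geometry of $\sigma$ enters: assuming $\sigma$ strongly convex (the general case reduces to this by splitting off a torus factor), the equalities $\langle m_\sigma,u_\rho\rangle=1>0$ on the generators of $\sigma$ force $m_\sigma$ to be positive on all of $\sigma\setminus\{0\}$ — it lies in the relative interior of $\sigma^{\vee}$ — so $\langle m_\sigma,n\rangle$ is a strictly positive integer for every lattice point $0\neq n\in\sigma\cap N$; applying this with $n=u_{\rho'}$ gives the estimate. Hence all discrepancies are $\ge 0$ and $U_\sigma$ has canonical singularities, which matches \cite[Prop.~8.2.12, Prop.~11.4.11]{CLS}.

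The genuinely delicate point is not the lattice-point inequality (a one-line argument) but making the discrepancy computation rigorous: one must use the explicit torus-invariant representative of the canonical class on the smooth toric variety $U_{\Sigma'}$ and compute the pullback $\phi^{*}K_{U_\sigma}$ as the divisor of the character $\chi^{-m_\sigma}$. A secondary subtlety I would address by citing the toric case directly is that it suffices to test discrepancies along toric divisorial valuations — each encoded by a primitive lattice vector in $\sigma$, with discrepancy $\langle m_\sigma,n\rangle-1$ — rather than over an arbitrary resolution; this reduction is part of the content of \cite[Prop.~11.4.11]{CLS}. Finally, the degenerate cases where $\sigma$ fails to be strongly convex or full-dimensional are handled by the observation that $U_\sigma$ then has a torus factor and the assertions reduce immediately to the strongly convex case.
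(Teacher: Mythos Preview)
Your argument is correct and is essentially the standard proof one finds in \cite{CLS}. Note, however, that the paper does not give its own proof of this proposition: it is stated as a background result with a direct citation to \cite[Prop.~8.2.12, Prop.~11.4.11]{CLS}, so there is no in-paper argument to compare against. Your reduction of Gorensteinness to Cartierness of $K_{U_\sigma}$ via Cohen--Macaulayness, followed by the Cartier criterion for torus-invariant divisors, and then the discrepancy computation via a toric resolution showing $\langle m_\sigma,u_{\rho'}\rangle-1\ge 0$, reproduces exactly the line of reasoning in the cited references.
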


\begin{pro}[$\mathbb Q$-Gorenstein  Condition {\cite[ Prop.~11.4.12]{CLS}}] \label{proQGC}
In the notation above, the following conditions are equivalent:
\begin{enumerate}[(i)]
\item $U_\sigma$ is $\mathbb Q$-Gorenstein.

\item There exists $m_\sigma \in  M_{\mathbb Q}$ such that
$
\langle m_\sigma ,u_\rho \rangle =1 \text{ for all } \rho \in \sigma (1)$.

\item The polytope  $\Pi_\sigma$ has a unique facet not containing the origin.

\end{enumerate}
\end{pro}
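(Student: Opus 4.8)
The plan is to derive all three equivalences from the standard toric description of the canonical divisor together with elementary convex geometry of the polytope $\Pi_\sigma$, in a way that parallels the proof of Proposition~\ref{proGC} with $M$ replaced by $M_{\bbQ}$. Throughout I would assume $\sigma$ is strongly convex and, after replacing $N_{\bbR}$ by the linear span $N_{\bbR,\sigma}$ of $\sigma$, full-dimensional: if $\sigma$ fails to be strongly convex then $U_\sigma$ splits off torus factors and all three conditions descend to the quotient strongly convex cone. With this reduction $\Pi_\sigma$ is a full-dimensional lattice polytope whose vertex set is exactly $\{0\}$ together with the primitive ray generators $u_\rho$, $\rho\in\sigma(1)$.

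For (i)~$\Leftrightarrow$~(ii): recall that on $U_\sigma$ one has $K_{U_\sigma}=-\sum_{\rho\in\sigma(1)}D_\rho$, where $D_\rho$ is the torus-invariant prime divisor associated to $\rho$, and that a torus-invariant Weil divisor $\sum_\rho a_\rho D_\rho$ on $U_\sigma$ is Cartier precisely when it equals $\operatorname{div}(\chi^m)=\sum_\rho\langle m,u_\rho\rangle D_\rho$ for some $m\in M$, i.e.~when there exists $m\in M$ with $\langle m,u_\rho\rangle=a_\rho$ for every $\rho$ (this uses $\operatorname{Pic}(U_\sigma)=0$; see \cite[Thm.~4.2.8, Thm.~8.2.3]{CLS}). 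Applying this to $-K_{U_\sigma}=\sum_\rho D_\rho$, and observing that passing to the $\ell$-th multiple of a divisor corresponds to replacing $m$ by $\ell m$, we get that $K_{U_\sigma}$ is $\bbQ$-Cartier --- i.e.~$U_\sigma$ is $\bbQ$-Gorenstein --- exactly when there exists $m_\sigma\in M_{\bbQ}$ with $\langle m_\sigma,u_\rho\rangle=1$ for all $\rho\in\sigma(1)$, which is condition (ii); this is the natural $\bbQ$-analogue of Proposition~\ref{proGC}.

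For (ii)~$\Leftrightarrow$~(iii): given $m_\sigma$ as in (ii), the functional $\langle m_\sigma,-\rangle$ takes the value $0$ at the vertex $0$ and $1$ at each vertex $u_\rho$, hence is $\le 1$ on $\Pi_\sigma$ with equality exactly along $Q:=\operatorname{Conv}(u_\rho:\rho\in\sigma(1))$; thus $Q$ is a facet of $\Pi_\sigma$ not containing the origin, and any other such facet $Q'$ has all its vertices among the $u_\rho$, so $Q'\subseteq Q$, and being a face of $\Pi_\sigma$ of the same dimension as $Q$ it equals $Q$ --- uniqueness. Conversely, assuming $Q$ is the unique facet avoiding the origin, I would first show that every $u_\rho$ lies on $Q$: if not, every facet through the vertex $u_\rho$ contains $0$, so the intersection of those facets, which is the single point $\{u_\rho\}$, would also contain $0$, absurd; hence $u_\rho$ lies on a facet avoiding the origin, necessarily $Q$. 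Then $Q$ spans a rational affine hyperplane $\{\,\langle m_Q,-\rangle=c\,\}$ with $c\ne 0$ (as $0\notin Q$) and $m_Q\in M_{\bbQ}$ (as $Q$ is the convex hull of lattice points), so $m_\sigma:=c^{-1}m_Q$ gives (ii). I expect the main obstacle to be the convex-geometry bookkeeping in this last step --- checking that $Q$ genuinely has codimension one, and that the dimension counts in the uniqueness and the ``every $u_\rho$ lies on $Q$'' arguments go through cleanly --- together with stating carefully the reduction to the strongly convex, full-dimensional case; none of this is conceptually deep, which is consistent with the result being essentially \cite[Prop.~11.4.12]{CLS}.
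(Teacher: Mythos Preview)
The paper does not give its own proof of this proposition; it is stated with a direct citation to \cite[Prop.~11.4.12]{CLS} and used as a black box. Your argument is correct and is essentially the standard one: the equivalence (i)$\Leftrightarrow$(ii) is exactly the $\bbQ$-version of the Cartier criterion for $-K_{U_\sigma}=\sum_\rho D_\rho$, and your convex-geometry argument for (ii)$\Leftrightarrow$(iii) is the expected one, including the key observation that every vertex of a full-dimensional polytope is the intersection of the facets containing it, which forces each $u_\rho$ onto the unique facet $Q$ avoiding the origin. The only places that deserve a word of care are already the ones you flagged: that $Q=\operatorname{Conv}(u_\rho)$ is genuinely $(n-1)$-dimensional (which follows since the $u_\rho$ span $N_\bbR$ and lie on the affine hyperplane $\{\langle m_\sigma,-\rangle=1\}$), and that the supporting hyperplane of $Q$ does not pass through $0$ (which holds because $0\in\Pi_\sigma\setminus Q$ and $Q=H\cap\Pi_\sigma$). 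With those two sentences added, the proof is complete.
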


Note that the property of $U_{\sigma,N}=U_{\sigma}$ being $\bbQ$-Gorenstein depends both on the cone $\sigma$ and on the lattice $N$ (see Example \ref{E:nonQGor}). This is not the case for the stronger property of $U_{\sigma, N}=U_{\sigma}$ being $\bbQ$-factorial, which is equivalent to the cone
$\sigma$ being simplicial (see \cite[Thm.~11.4.8]{CLS}), and hence depends only on the cone $\sigma$ and not on the lattice $N$.

\begin{pro}[Canonical/Terminal Condition {\cite[Prop.~11.4.12]{CLS}}] \label{proCC}
In the notation above, assume that $U_{\sigma}$ is $\bbQ$-Gorenstein. Then $U_{\sigma}$  has canonical (resp.~terminal) singularities if and only if the only non-zero lattice points in the polytope  $\Pi_\sigma$ lie on the unique facet of
$\Pi_{\sigma}$ not containing the origin  (resp.~the only lattice points of $\Pi_\sigma$ are its vertices).
\end{pro}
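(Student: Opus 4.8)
The plan is to follow the standard toric computation of discrepancies, as in \cite[\S 11.4]{CLS}, reducing the statement to elementary convex geometry inside the polytope $\Pi_\sigma$; we may assume as usual that $\sigma$ is strongly convex, so that $U_\sigma$ has no torus factor. Recall that $K_{U_\sigma}=-\sum_{\rho\in\sigma(1)}D_\rho$, and that the $\bbQ$-Gorenstein hypothesis, via Proposition \ref{proQGC}, provides $m_\sigma\in M_{\bbQ}$ with $\langle m_\sigma,u_\rho\rangle=1$ for every $\rho\in\sigma(1)$; thus $\operatorname{div}(\chi^{m_\sigma})-\sum_{\rho}D_\rho$ is $\bbQ$-Cartier and $\bbQ$-linearly equivalent to $K_{U_\sigma}$, locally cut out by $m_\sigma$. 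Choosing a smooth refinement $\Sigma'$ of the fan of $\sigma$ gives a resolution $\phi\colon X_{\Sigma'}\to U_\sigma$ whose exceptional prime divisors are the torus-invariant divisors $D_u$ attached to the primitive generators $u$ of the rays of $\Sigma'$ not lying in $\sigma(1)$, and the toric description of the pullback of a $\bbQ$-Cartier divisor yields at once the discrepancy formula $a(D_u,U_\sigma)=\langle m_\sigma,u\rangle-1$.

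The second step is to reduce the canonical/terminal condition to a statement purely about lattice points of $\sigma$. Since $U_\sigma$ is toric, it has canonical (resp.\ terminal) singularities if and only if $a(E,U_\sigma)\ge 0$ (resp.\ $>0$) for every \emph{toric} exceptional divisor $E$ over $U_\sigma$; and as $\Sigma'$ ranges over all smooth refinements, the primitive generators of the new rays range exactly over all primitive lattice points of $\sigma\cap N$ not contained in a ray $\rho\in\sigma(1)$. Combined with the discrepancy formula of the first step, $U_\sigma$ is canonical if and only if $\langle m_\sigma,u\rangle\ge 1$ for every primitive $u\in\sigma\cap N$ (the ray generators being automatically at the boundary value $1$), and terminal if and only if moreover $\langle m_\sigma,u\rangle>1$ whenever $u$ does not lie on a ray of $\sigma$.

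Finally I would translate these inequalities into the polytope statement. Because $\sigma=\operatorname{Cone}(u_\rho)_{\rho\in\sigma(1)}$ and $\langle m_\sigma,u_\rho\rangle=1$, one checks directly that $\Pi_\sigma=\sigma\cap\{\langle m_\sigma,\,\cdot\,\rangle\le 1\}$, whose unique facet off the origin is $\sigma\cap\{\langle m_\sigma,\,\cdot\,\rangle=1\}=\conv(u_\rho)_{\rho\in\sigma(1)}$, with vertex set $\{0\}\cup\{u_\rho\}_{\rho\in\sigma(1)}$. If $U_\sigma$ is canonical and $0\ne u\in\Pi_\sigma\cap N$, write $u=nu'$ with $u'$ primitive and $n\ge 1$ an integer; then $1\le\langle m_\sigma,u'\rangle\le n\langle m_\sigma,u'\rangle=\langle m_\sigma,u\rangle\le 1$ forces $\langle m_\sigma,u\rangle=1$, so $u$ lies on the facet. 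Conversely, if every nonzero lattice point of $\Pi_\sigma$ lies on that facet, then no primitive $u\in\sigma\cap N$ can satisfy $0<\langle m_\sigma,u\rangle<1$ (it would be a nonzero lattice point of $\Pi_\sigma$ off the facet), and $\langle m_\sigma,u\rangle=0$ is impossible for $0\ne u\in\sigma$ since $m_\sigma$ is positive on every ray generator; hence $\langle m_\sigma,u\rangle\ge 1$ for all primitive $u$, and $U_\sigma$ is canonical. The terminal case is identical, replacing ``$\ge 1$'' by ``$>1$'' and ``on the facet'' by ``a vertex'': multiples $nu_\rho$ with $n\ge 2$ have $\langle m_\sigma,nu_\rho\rangle=n\ge 2$ and so automatically leave $\Pi_\sigma$, while the analysis of the remaining primitive points is as above.

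The only genuinely delicate point is the reduction carried out in the second paragraph — that canonicity and terminality of the \emph{singular} toric variety $U_\sigma$ may be tested using toric resolutions alone, together with the identification of which primitive lattice vectors arise as generators of the new rays. This is standard (it is the heart of \cite[Prop.~11.4.12]{CLS}) but it is where the real geometric input lies; the first step is a direct toric divisor computation and the third is elementary convex geometry.
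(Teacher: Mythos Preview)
The paper does not give its own proof of this proposition; it is quoted verbatim as a background result from \cite[Prop.~11.4.12]{CLS} and used without further argument. Your reconstruction is correct and is precisely the standard proof from \cite{CLS}: compute toric discrepancies as $a(D_u,U_\sigma)=\langle m_\sigma,u\rangle-1$ via a smooth refinement, reduce to the inequality $\langle m_\sigma,u\rangle\ge 1$ (resp.\ $>1$) on primitive lattice points of $\sigma$, and then identify $\Pi_\sigma=\sigma\cap\{\langle m_\sigma,\cdot\rangle\le 1\}$ to translate this into the polytope statement. There is nothing to compare against, and no gap in what you wrote.
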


\subsection{The case of cyclic groups}\label{SS:cyclic}

In this subsection, we will consider the special case of a cyclic group $\bbZ_r:=\bbZ/r\bbZ$  acting on an affine toric variety $U_{\sigma}$, preserving  the torus $\bbT=\Spec k[M]$.

After fixing a primitive $r$-th root of unity $\zeta\in k$, the action of $\mathbb Z_r$ on the coordinate ring $k[M]$ of $\bbT$ is given by a linear form $\lambda:M\to \mathbb Z$, well defined up to adding an $r$ multiple of a linear form; in other words, the action is uniquely determined by an element $[\lambda]\in N/rN=\operatorname{Hom}_{\mathbb Z}(M,\mathbb Z/r\mathbb Z)$. Explicitly, if  we choose a primitive $r$-th root of unity $\zeta\in k$, we can identify the group $\bbZ_r$ with the subgroup of $k^*$ generated by $\zeta$ and the action on $k[M]$ is given by
\begin{equation}\label{E:action1}
\zeta \cdot x^m=\zeta^{{\lambda(m)}}x^m.
\end{equation}
Moreover,  if we fix an isomorphism $M\cong \bbZ^n$ so that $k[M]=k[x_1^{\pm1},\ldots,x_n^{\pm 1}]$, then the action of $\bbZ_r$ on $k[M]$ is given by
\begin{equation}\label{E:action2}
\zeta\cdot x_i=\zeta^{a_i}x_i \hspace{0.5cm} \text{ for some } 0\le a_i<r \hspace{0.3cm} (i=1,\ldots,n).
\end{equation}

\begin{pro}\label{P:crit-cyc}
 Let $N=\mathbb Z^n=\mathbb Z\langle e_1,\ldots, e_n\rangle$, and let $\sigma\subseteq N_{\mathbb R}$ be a (convex, rational polyhedral) cone.
Let $\zeta$ be a primitive $r$-th root of unity and suppose that $\mathbb Z_r=\langle \zeta \rangle$ acts on $U_{\sigma,N}$ preserving the torus $\mathbb T =\operatorname{Spec}k[M]$ and that
the action on the ring $k[M]=k[x_1^{\pm 1},\ldots,x_n^{\pm 1}]$
is  given by
$$
\zeta \cdot x^m=\zeta^{{\lambda(m)}}x^m \hspace{0.5cm} \text{ for some }  [\lambda]\in N/rN,
$$
or more explicitly by
\begin{equation*}
\zeta\cdot x_i=\zeta^{a_i}x_i \hspace{0.5cm} \text{ for some } 0\le a_i<r \hspace{0.3cm} (i=1,\ldots,n).
\end{equation*}
Then $U_{\sigma, N}/\mathbb Z_r$ is isomorphic to the affine toric variety $U_{\sigma, N'}$ where $N'$ is the super-lattice of $N$  given by
$$
N\subseteq N'=N+\mathbb Z\left \langle \frac{1}{r}\lambda\right \rangle =\mathbb Z \left\langle e_1,\ldots,e_n, \sum_{i=1}^n\frac{a_ie_i}{r} \right\rangle\subset N_{\bbQ}.
$$
In particular, $U_{\sigma, N}/\mathbb Z_r$ is
\begin{enumerate}[(i)]
\item \label{P:crit-cyc1} $\bbQ$-Gorenstein if and only  if $\Pi_{\sigma,N'}$ has a unique facet not containing the origin;
\item \label{P:crit-cyc2} canonical if and only if $\Pi_{\sigma,N'}$ has a unique facet not containing the origin and  the only non-zero lattice  points in $\Pi_{\sigma,N'}$ lie in this facet;
\item \label{P:crit-cyc3} terminal if and only if $\Pi_{\sigma,N'}$ has a unique facet not containing the origin and the only lattice points of $\Pi_{\sigma,N'}$ are its vertices.
\end{enumerate}
\end{pro}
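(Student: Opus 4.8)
The plan is to identify the quotient ring directly and then invoke the toric criteria of \S\ref{SS:not-tor}. The coordinate ring of $U_{\sigma,N}$ is the semigroup algebra $k[\sigma^\vee\cap M]$, with $k$-basis the monomials $x^m$, $m\in\sigma^\vee\cap M$. Since $U_{\sigma,N}$ is normal and contains $\mathbb{T}=\Spec k[M]$ as a dense open subset, any automorphism preserving $\mathbb{T}$ is determined by its restriction to $\mathbb{T}$; hence the given $\bbZ_r$-action restricts to $k[\sigma^\vee\cap M]$ by the monomial rule $\zeta\cdot x^m=\zeta^{\lambda(m)}x^m$. Consequently $x^m$ is $\bbZ_r$-invariant if and only if $\lambda(m)\equiv 0\pmod r$, so
\[
k[\sigma^\vee\cap M]^{\bbZ_r}=k[\sigma^\vee\cap M'],\qquad M':=\{m\in M:\lambda(m)\equiv 0\ \mathrm{mod}\ r\}.
\]
I would remark that neither $M'$ nor the lattice $N'$ introduced below depends on the chosen lift $\lambda\in N$ of $[\lambda]\in N/rN$: replacing $\lambda$ by $\lambda+r\mu$ with $\mu\in N$ changes neither the congruence nor $N+\bbZ\langle\tfrac1r\lambda\rangle$.

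Next I would recognize $M'$ as a dual lattice. Regarding $\lambda\in N$ as the linear form $m\mapsto\langle m,\lambda\rangle$ on $M$, set $N':=N+\bbZ\langle\tfrac1r\lambda\rangle\subset N_{\bbQ}$. Then $N\subseteq N'$ is a finite-index superlattice with $N'_{\bbR}=N_{\bbR}$, and for $m\in M=N^\vee$ one has $\langle m,\tfrac1r\lambda\rangle=\tfrac1r\lambda(m)\in\bbZ$ exactly when $m\in M'$; therefore $M'=(N')^\vee$. Choosing the identification $M\cong\bbZ^n$ dual to the basis $e_1,\dots,e_n$ of $N$, the relation $\zeta\cdot x_i=\zeta^{a_i}x_i$ gives $\lambda=\sum_i a_ie_i$, whence $N'=\bbZ\langle e_1,\dots,e_n,\sum_i\tfrac{a_ie_i}{r}\rangle$, as in the statement.

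Now I would conclude the first assertion. The quotient $U_{\sigma,N}/\bbZ_r$ is $\Spec$ of the ring of invariants, and since the dual cone $\sigma^\vee\subset M_{\bbR}=M'_{\bbR}$ is the same subset whether computed with $M$ or $M'$, we get
\[
U_{\sigma,N}/\bbZ_r=\Spec\bigl(k[\sigma^\vee\cap M]^{\bbZ_r}\bigr)=\Spec k[\sigma^\vee\cap M']=U_{\sigma,N'}.
\]
Finally, parts (i), (ii) and (iii) follow immediately by applying Proposition \ref{proQGC} and Proposition \ref{proCC} to the affine toric variety $U_{\sigma,N'}$, noting that the primitive ray generators $u_{\rho,N'}$ and the polytope $\Pi_{\sigma,N'}$ appearing there are precisely those attached to the new lattice $N'$. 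I do not anticipate a genuine obstacle: the only two points requiring care are the reduction of the $\bbZ_r$-action on all of $U_{\sigma,N}$ to the monomial formula on $\mathbb{T}$ (via normality and the density of the torus) and the lattice-duality bookkeeping identifying $M'$ with $(N')^\vee$, both of which are routine.
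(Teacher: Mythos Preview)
Your proof is correct and follows essentially the same approach as the paper: identify the invariant ring as $k[\sigma^\vee\cap M']$ with $M'=\{m\in M:\lambda(m)\equiv 0\pmod r\}$, verify that $M'=(N')^\vee$ by the pairing computation, and then apply Propositions \ref{proQGC} and \ref{proCC}. Your write-up is in fact slightly more detailed in places (independence of the lift of $\lambda$, the normality/density remark), but the argument is the same.
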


\begin{proof}
Let $M'\subseteq M$ be the sub-lattice of invariants; i.e., $k[M']=k[x_1^{\pm1},\ldots,x_n^{\pm 1}]^{\mathbb Z_r}$.   Clearly, the quotient $U_{\sigma,N}/\bbZ_r$ is the affine toric variety equal to $\Spec k[\sigma\cap M']$.
Therefore, in order to prove the first statement, we need to prove that after setting $N'=N+\mathbb Z\left \langle \frac{1}{r}\lambda\right \rangle$, we have $(N')^\vee=M'\subseteq M$. 
Since $N\subseteq N'$, with torsion quotient, we have $M=N^\vee \supseteq (N')^\vee$.  
Now pick an element $m=\sum_{i=1}^nm_ie_i^\vee\in M$ (with $m_i\in \bbZ$). Since
$N'$ is obtained from $N$ by adding the element $\frac{1}{r}\lambda=\sum_{i=1}^n\frac{a_ie_i}{r}\in N_{\bbQ}$, we have that
$$
m\in (N')^\vee\iff \sum_{i=1}^n\frac{a_im_i}{r}\in \mathbb Z \iff \sum_{i=1}^n a_im_i\equiv 0 \pmod r \iff x^m:=\prod_{i=1}^n x_i^{m_i}\in k[x_1^{\pm1},\ldots,x_n^{\pm 1}]^{\mathbb Z_r}$$
$$
\iff m\in M'.
$$
The assertions (i)-(iii) now follow  from this using Propositions \ref{proQGC} and \ref{proCC}.
\end{proof}

Using the above proposition, we can prove the following criterion that plays a  crucial role in the proof of Theorem \ref{T:can-sing}.

\begin{lem}\label{L:map-smooth} 
For $i=1, 2$, 
let $N_i$ be a lattice and let $\sigma_i\subset (N_i)_{\bbR}$ be a (convex, rational polyhedral) cone. Let $\phi:U_{\sigma_1,N_1}\to U_{\sigma_2, N_2}$ be a toric morphism induced by a homomorphism $\ov\phi:N_1\to N_2$ of lattices such that
\begin{enumerate}[(i)]
\item \label{hypii} $\rho\in \sigma_1(1)\Rightarrow \ov\phi_{\bbR}(\rho)\in \sigma_2(1)$;
\item \label{hypiii} For every $\rho\in \sigma_1(1)$, we have that $\ov\phi(u_{\rho,N_1})=u_{\ov\phi_{\bbR}(\rho),N_2}$.
\end{enumerate}
Suppose now that the cyclic group $\bbZ_r$ acts on the $U_{\sigma_i,N_i}$, preserving the torus $\bbT_i=\Gm\otimes_{\bbZ} N_i$ for $i=1,2$ and assume that 
\begin{enumerate}[(a)]
\item \label{hypa} $\phi: U_{\sigma_1,N_1}\to U_{\sigma_2,N_2}$ is $\bbZ_r$-equivariant;
\item \label{hypb} $U_{\sigma_2,N_2}$ is smooth and $\bbZ_r$ acts on $U_{\sigma_2,N_2}$ without pseudo-reflections;
\end{enumerate} 
Then $U_{\sigma_1,N_1}/\bbZ_r$ is $\bbQ$-Gorenstein. Moreover, if $U_{\sigma_2,N_2}/\bbZ_r$ has canonical    singularities, then $U_{\sigma_1,N_1}$ has canonical    singularities.
\end{lem}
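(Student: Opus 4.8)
The plan is to reduce everything to the combinatorial criterion of Proposition \ref{P:crit-cyc} and then compare lattice polytopes on the two sides via the map $\ov\phi$. First I would fix a primitive $r$-th root of unity $\zeta$ and encode the two $\bbZ_r$-actions by elements $[\lambda_i]\in N_i/rN_i$ as in the setup of Proposition \ref{P:crit-cyc}. The $\bbZ_r$-equivariance hypothesis \eqref{hypa} says precisely that $\ov\phi_{\bbQ}$ carries $\frac{1}{r}\lambda_1$ to $\frac{1}{r}\lambda_2$ (modulo $N_2$, but in fact we may choose the representative $\lambda_1$ so that $\ov\phi(\lambda_1)=\lambda_2$ on the nose). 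Consequently, writing $N_i':=N_i+\bbZ\langle\frac{1}{r}\lambda_i\rangle$, the map $\ov\phi_{\bbQ}$ restricts to a lattice homomorphism $N_1'\to N_2'$, and by Proposition \ref{P:crit-cyc} we have $U_{\sigma_i,N_i}/\bbZ_r\cong U_{\sigma_i,N_i'}$.

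Next I would translate hypotheses \eqref{hypii} and \eqref{hypiii} into statements about the primitive generators with respect to the \emph{finer} lattices $N_i'$. Since $U_{\sigma_2,N_2}$ is smooth and $\bbZ_r$ acts without pseudo-reflections, $U_{\sigma_2,N_2'}$ has at worst canonical singularities by Theorem \ref{T:critsmooth}, and in particular it is $\bbQ$-Gorenstein; equivalently (Proposition \ref{proQGC}) there is $m_2\in M_2'\otimes\bbQ=(N_2')^\vee\otimes\bbQ$ with $\langle m_2,u_{\rho',N_2'}\rangle=1$ for every ray $\rho'$ of $\sigma_2$. The key point is that, because $\ov\phi$ sends each ray of $\sigma_1$ to a ray of $\sigma_2$ and sends the $N_1$-primitive generator to the $N_2$-primitive generator, and because passing from $N_i$ to $N_i'$ only rescales primitive generators, pulling back $m_2$ along $\ov\phi_{\bbQ}^\vee$ produces an element $m_1\in (N_1')^\vee\otimes\bbQ$ with $\langle m_1,u_{\rho',N_1'}\rangle$ equal to a positive rational constant on all rays $\rho'$ of $\sigma_1$ — after rescaling, this gives the element required by Proposition \ref{proQGC}(ii), so $U_{\sigma_1,N_1}/\bbZ_r$ is $\bbQ$-Gorenstein. (Here I should be slightly careful: the $N_1'$-primitive generator of a ray $\rho'$ may be a fraction $\frac{1}{k}\ov\phi(u_{\rho,N_1})$ of the image of the $N_1$-primitive generator, but this only scales the pairing by $\frac1k$, which is harmless for the $\bbQ$-Gorenstein statement and is tracked honestly for the canonical statement below.)

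For the canonical statement I would argue via Proposition \ref{proCC}: assuming $U_{\sigma_2,N_2}/\bbZ_r=U_{\sigma_2,N_2'}$ has canonical singularities, the only nonzero lattice points of $\Pi_{\sigma_2,N_2'}$ lie on its unique facet $F_2$ away from the origin, i.e.\ they satisfy $\langle m_2,-\rangle=1$ with $m_2$ as above. Given a nonzero lattice point $p\in\Pi_{\sigma_1,N_1'}\cap N_1'$, I would write it as a convex combination $p=\sum_{\rho'}t_{\rho'}u_{\rho',N_1'}$ with $t_{\rho'}\ge0$, $\sum t_{\rho'}\le 1$, and push forward by $\ov\phi_{\bbQ}$; using \eqref{hypii}--\eqref{hypiii} and the rescaling bookkeeping, $\ov\phi_{\bbQ}(p)$ lands in $\Pi_{\sigma_2,N_2'}$, is a lattice point of $N_2'$ (as $\ov\phi(N_1')\subseteq N_2'$), and is nonzero provided $\ov\phi_{\bbQ}$ is injective on the relevant span — which it is, since $\sigma_1$ spans $(N_1)_{\bbR}$ up to the irrelevant kernel and the rays map to distinct rays. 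Then $\langle m_2,\ov\phi_{\bbQ}(p)\rangle=1$, which forces $\langle m_1,p\rangle=1$ (up to the scaling constant, which one checks is $1$ on lattice points by primitivity), i.e.\ $p$ lies on the unique facet of $\Pi_{\sigma_1,N_1'}$ not through the origin; hence $U_{\sigma_1,N_1}/\bbZ_r$ is canonical. The main obstacle I anticipate is exactly the bookkeeping in the previous sentences: controlling how the primitive generators, and hence the polytopes $\Pi_{\sigma_i,N_i'}$, change under the lattice refinement $N_i\subseteq N_i'$ and under $\ov\phi$, and in particular ruling out that $\ov\phi_{\bbQ}$ collapses a nonzero lattice point of $\Pi_{\sigma_1,N_1'}$ to the origin. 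This is where hypotheses \eqref{hypii} and \eqref{hypiii} are essential, and I would isolate the needed injectivity/primitivity facts as a preliminary lemma before running the polytope comparison.
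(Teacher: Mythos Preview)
Your overall strategy matches the paper's: encode the actions by $[\lambda_i]\in N_i/rN_i$, pass to the refined lattices $N_i'=N_i+\bbZ\langle\frac{1}{r}\lambda_i\rangle$, identify $U_{\sigma_i,N_i}/\bbZ_r$ with $U_{\sigma_i,N_i'}$, pull back the $\bbQ$-Gorenstein witness $m_2$ along $\ov\phi^\vee$, and push lattice points of $\Pi_{\sigma_1,N_1'}$ forward into $\Pi_{\sigma_2,N_2'}$. However, there is a genuine gap at the step you flag as ``slightly careful''.

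You allow the $N_1'$-primitive generator of a ray $\rho$ to be $\frac{1}{k_\rho}u_{\rho,N_1}$ and then say this ``only scales the pairing by $\frac{1}{k_\rho}$, which is harmless for the $\bbQ$-Gorenstein statement''. It is not harmless: if the integers $k_\rho$ vary with $\rho$, then $\langle \ov\phi^\vee(m_2),u_{\rho,N_1'}\rangle=\frac{1}{k_\rho}$ is not constant across rays, and no rescaling of $m_1$ fixes this. (Example \ref{E:nonQGor} shows exactly that a cyclic quotient of a Gorenstein toric variety can fail to be $\bbQ$-Gorenstein, so something must force all $k_\rho=1$.) What you are missing is the role of hypothesis \eqref{hypb}. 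Smoothness of $U_{\sigma_2,N_2}$ together with the absence of pseudo-reflections forces $u_{\tau,N_2}=u_{\tau,N_2'}$ for every ray $\tau$ of $\sigma_2$: the primitive generators $u_{\tau,N_2}$ form a basis of $N_2$, and a pseudo-reflection is precisely what would make one of them divisible in $N_2'$. Once you know this, combine it with \eqref{hypii}, \eqref{hypiii}: write $u_{\rho,N_1}=c\cdot u_{\rho,N_1'}$ and $\ov\phi(u_{\rho,N_1'})=l\cdot u_{\ov\phi(\rho),N_2'}$ with $c,l\in\bbZ_{>0}$; then
\[
u_{\ov\phi(\rho),N_2'}=u_{\ov\phi(\rho),N_2}=\ov\phi(u_{\rho,N_1})=c\cdot\ov\phi(u_{\rho,N_1'})=c\,l\cdot u_{\ov\phi(\rho),N_2'},
\]
forcing $c=l=1$. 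This is the paper's key identity: $u_{\rho,N_1}=u_{\rho,N_1'}$ and $\ov\phi(u_{\rho,N_1'})=u_{\ov\phi(\rho),N_2'}$. With it, $m_1=\ov\phi^\vee(m_2)$ pairs to exactly $1$ on every ray of $\sigma_1$ (no rescaling needed), and in the canonical step the push-forward of $x=\sum_\rho\alpha_\rho u_{\rho,N_1'}$ is $\sum_\rho\alpha_\rho u_{\ov\phi(\rho),N_2'}$, which is a nonzero point of $\Pi_{\sigma_2,N_2'}$ because $\sum\alpha_\rho>0$ and the ray generators of the smooth (hence simplicial) cone $\sigma_2$ are linearly independent. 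You do not need injectivity of $\ov\phi_\bbQ$; the nonvanishing comes from the target side.
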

\begin{proof}
Following the above notation, fix a primitive $r$-th root of unity $\zeta$, and suppose that the action of $\bbZ_r$ on 
$U_{\sigma_i,N_i}$ is determined by the element $[\lambda_i]\in N_i/rN_i$. Since $\phi$ is $\bbZ_r$-equivariant by 
\eqref{hypa}, we must have that $\ov\phi([\lambda_1])=[\lambda_2]$ so that the homomorphism 
$\ov\phi:N_1\to N_2$ extends to a homomorphism (which we will still denote by $\ov\phi$)
$$\ov\phi: N_1':=N_1+\mathbb Z\left \langle \frac{1}{r}\lambda_1 \right\rangle \longrightarrow N_2':=N_2+\mathbb Z\left\langle \frac{1}{r}\lambda_2\right \rangle.$$
By Proposition \ref{P:crit-cyc}, the toric morphism $\wt{\phi}:U_{\sigma_1,N_1'}\to U_{\sigma_2,N_2'}$ induced by $\ov\phi$ coincides with the quotient map $U_{\sigma_1,N_1}/\bbZ_r\to U_{\sigma_2,N_2}/\bbZ_r$ induced by $\phi$.

Fix now an extremal ray $\rho$ of $\sigma_1$ and look at $\ov\phi_{\bbR}(\rho)$, which is an extremal ray of $\sigma_2$ by  \eqref{hypii}. 
Since $N_1\subseteq N_1'$, the two primitive elements along the ray $\rho$ with respect to the above lattices are related by $u_{\rho,N_1}=c\cdot u_{\rho,N_1'}$ for some $c\in \bbZ_{>0}$.
On the other hand, it follows from  \eqref{hypb} that $u_{\ov\phi_{\bbR}(\rho),N_2}=u_{\ov\phi_{\bbR}(\rho),N_2'}$. Moreover, it follows from \eqref{hypiii}
that $\ov\phi(u_{\rho,N_1})=u_{\ov\phi_{\bbR}(\rho),N_2}$.  Finally, we will have that $\ov\phi(u_{\rho,N_1'})=l\cdot u_{\ov\phi_{\bbR}(\rho),N_2'}$ for some $l\in \bbZ_{>0}$. 
Putting everything together we find that 
$$u_{\ov\phi_{\bbR}(\rho),N_2'}=u_{\ov\phi_{\bbR}(\rho),N_2}=\ov\phi(u_{\rho,N_1})=c\cdot \ov\phi(u_{\rho,N_1'})=c\cdot l\cdot u_{\ov\phi_{\bbR}(\rho),N_2'}
$$
from which we deduce that $c=l=1$, i.e.~that 
\begin{equation}\label{eqnTorEq}
u_{\rho,N_1}= u_{\rho,N_1'} \: \text{ and } \: \ov\phi(u_{\rho,N_1'})=u_{\ov\phi_{\bbR}(\rho),N_2'}.
\end{equation}
Observe now that, since $U_{\sigma_2, N_2}$ is smooth by \eqref{hypb}, the quotient $U_{\sigma_2,N_2}/\bbZ_ r=U_{\sigma_2, N_2'}$ is $\bbQ$-factorial, hence in particular $\bbQ$-Gorenstein. By Proposition \ref{proQGC}, there exists $m_2\in (M_2')_{\bbQ}=(M_2)_{\bbQ}=(N_2^\vee)_{\bbQ}$ such that $\langle m_2, u_{\tau, N_2'}\rangle=1$ for every extremal ray $\tau$ of $\sigma_2$. Using \eqref{eqnTorEq}, the element $m_1=(\ov\phi_{\bbR})^\vee(m_2)\in (M_1')_{\bbQ}=(M_1)_{\bbQ}=(N_1^\vee)_{\bbQ}$ satisfies (for every extremal ray $\rho$ of $\sigma_1$)
$$\langle m_1, u_{\rho, N_1'}\rangle= \langle (\ov\phi_{\bbR})^\vee(m_2), u_{\rho, N_1'}\rangle= 
 \langle m_2, \ov\phi(u_{\rho, N_1'})\rangle=\langle m_2, u_{\ov\phi_{\bbR}(\rho),N_2'} \rangle=1,
$$
which shows that $U_{\sigma_1,N_1'}=U_{\sigma_1,N_1}/\bbZ_r$ is $\bbQ$-Gorenstein.

Take now a point $0\neq x\in N_1'$ which belongs to $\Pi_{\sigma_1,N_1'}$, i.e.
$$x=\sum_{\rho\in \sigma_1(1)} \alpha_{\rho}\cdot  u_{\rho, N_1'} \: \text{ with } \alpha_{\rho}\geq 0 \: \text{ and }\: 0< \sum_{\rho\in \sigma_1(1)}\alpha_{\rho}\leq 1.$$
Using \eqref{eqnTorEq}, we get that 
$$\ov\phi(x)=\sum_{\rho\in \sigma_1(1)} \alpha_{\rho}\cdot  u_{\ov\phi_{\bbR}(\rho), N_2'} 
\Rightarrow 0\neq \ov\phi(x)\in \Pi_{\sigma_2,N_2'}.$$

If $U_{\sigma_2,N_2'}=U_{\sigma_2,N_2}/\bbZ_r$ has canonical singularities then Proposition \ref{proCC} implies that $\ov\phi(x)$ belongs to the unique facet of $\Pi_{\sigma_2,N_2'}$ not containing the origin. This is equivalent  to the fact that  $\sum_{\rho\in \sigma_1(1)}\alpha_{\rho}=1$, which then implies that $x$ also belongs to the unique facet of $\Pi_{\sigma_1,N_1'}$ not containing the origin, i.e.~that
$U_{\sigma_1,N_1'}=U_{\sigma_1,N_1}/\bbZ_r$ has canonical singularities.
\end{proof}

Although we will not use this, just for the sake of completeness, we prove the following criterion for a cyclic quotient of an affine Gorenstein toric variety to be  Gorenstein.

\begin{pro}\label{P:Gorcyc}
Same notation as in Proposition \ref{P:crit-cyc}.
Assume furthermore that $U_{\sigma, N}$ is Gorenstein, so that there is an  $m_\sigma \in  M$  such that
$$ \langle m_\sigma ,u_\rho \rangle =1 \text{ for all } \rho \in \sigma (1),$$
where $u_\rho$ is the primitive element along the ray $\rho$ with respect to the lattice $N$.   If $\lambda$ and $m_\sigma$ satisfy
$$ \frac{1}{r}\lambda(m_\sigma)\in \mathbb Z, $$
then $U_{\sigma,N}/\mathbb Z_r$ is Gorenstein.
\end{pro}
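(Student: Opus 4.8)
The plan is to combine the explicit description of the quotient from Proposition \ref{P:crit-cyc} with the combinatorial Gorenstein criterion of Proposition \ref{proGC}. By Proposition \ref{P:crit-cyc}, we have $U_{\sigma,N}/\mathbb{Z}_r \cong U_{\sigma,N'}$, where $N' = N + \mathbb{Z}\langle \frac{1}{r}\lambda\rangle \subset N_{\mathbb{Q}}$ has dual lattice $M' := (N')^{\vee} \subseteq M$. By Proposition \ref{proGC} it suffices to produce an element $m \in M'$ with $\langle m, u_{\rho,N'}\rangle = 1$ for every extremal ray $\rho$ of $\sigma$, where $u_{\rho,N'}$ denotes the primitive generator of $\rho \cap N'$.

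The natural candidate is the element $m_\sigma \in M$ already furnished by the Gorenstein hypothesis on $U_{\sigma,N}$. First I would check that $m_\sigma \in M'$. Since $M' = (N')^{\vee}$ and $N'$ is generated over $N$ by $\frac{1}{r}\lambda$, membership $m_\sigma \in M'$ is equivalent to $\langle m_\sigma, \frac{1}{r}\lambda\rangle = \frac{1}{r}\lambda(m_\sigma) \in \mathbb{Z}$, which is exactly the hypothesis of the proposition. Next, for each extremal ray $\rho$ of $\sigma$, the inclusion $N \subseteq N'$ gives $\rho \cap N \subseteq \rho \cap N'$, whence $u_{\rho,N} = c_\rho \cdot u_{\rho,N'}$ for some $c_\rho \in \mathbb{Z}_{>0}$. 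Pairing with $m_\sigma$ and using $\langle m_\sigma, u_{\rho,N}\rangle = 1$ gives $1 = c_\rho \langle m_\sigma, u_{\rho,N'}\rangle$; since $m_\sigma \in M'$ the factor $\langle m_\sigma, u_{\rho,N'}\rangle$ is an integer, forcing $c_\rho = 1$ and $\langle m_\sigma, u_{\rho,N'}\rangle = 1$. Thus the very same element $m_\sigma$ certifies the Gorenstein condition for $U_{\sigma,N'}$ via Proposition \ref{proGC}, and we are done.

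There is essentially no serious obstacle here. The one point meriting a moment's care is that passing to the finer lattice $N'$ can, in principle, change the primitive generators of the rays of $\sigma$; but the computation above shows that, precisely on the rays where $\langle m_\sigma, u_{\rho,N}\rangle = 1$, these generators are in fact unchanged once one knows $m_\sigma \in M'$. Alternatively, the argument could be phrased through the polytope $\Pi_{\sigma,N'}$ and its facets as in Proposition \ref{proQGC}, but the direct verification of the Gorenstein criterion is cleaner and avoids the $\mathbb{Q}$-Gorenstein reduction.
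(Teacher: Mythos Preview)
Your proof is correct and is essentially the same as the paper's: both use the hypothesis $\frac{1}{r}\lambda(m_\sigma)\in\bbZ$ to show $m_\sigma\in M'=(N')^\vee$, and then use the pairing $\langle m_\sigma,u_{\rho,N}\rangle=1$ together with $m_\sigma\in M'$ to conclude that the primitive generators along each ray are unchanged upon passing from $N$ to $N'$. The only cosmetic difference is that the paper phrases the primitivity check as a proof by contradiction (if $u_{\rho,N}=l\cdot\tilde u_\rho$ with $l\ge 2$, then $\langle m_\sigma,\tilde u_\rho\rangle\notin\bbZ$), whereas you solve $1=c_\rho\langle m_\sigma,u_{\rho,N'}\rangle$ directly for $c_\rho=1$.
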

\begin{proof}
We will use the notation of the proof of the above Proposition \ref{P:crit-cyc}. The assumption $ \frac{1}{r}\lambda(m_\sigma)\in \mathbb Z$ implies that $m_\sigma \in M'=(N')^{\vee}$.
Moreover, the fact that $\langle m_\sigma, u_\rho\rangle =1$ insures that $u_\rho$ is  still a primitive generator of $\rho\in \sigma(1)$ with respect to $N'$: indeed if $u_{\rho}=l\cdot \tilde u_{\rho}$ for some $2\leq l\in \bbN$  and
$\tilde u_\rho\in N'$, then
$$1=\langle m_\sigma, u_\rho\rangle =l\langle m_\sigma, \tilde u_\rho\rangle \Rightarrow \langle m_\sigma, \tilde u_\rho\rangle\not\in \bbZ,  $$
which contradicts the fact that $m_\sigma\in (N')^{\vee}$.
\end{proof}

\begin{rem}
If we apply the above Propositions \ref{P:crit-cyc} and \ref{P:Gorcyc} to the case where $U_{\sigma, N}=\bbA^n_k$, we get back one direction of Theorem \ref{T:critsmooth} for finite cyclic quotients of smooth varieties.
\end{rem}

We warn the reader that, contrary to the fact that finite quotients of  $\bbQ$-factorial toric singularities are $\bbQ$-factorial (because the factoriality of $U_{\sigma, N}$ is equivalent to the fact that the cone $\sigma$ is simplicial), a finite quotient  of a Gorenstein toric singularity need not to be $\bbQ$-Gorenstein, as the following example shows.

\begin{exa}\label{E:nonQGor}
 Let $N=\mathbb Z^3=\mathbb Z\langle e_1,e_2,e_3 \rangle $ and consider the toric variety  $U_{\sigma, N}$ defined by the cone
$$
\sigma =\mathbb R_{\ge 0}\langle e_1,e_2,e_3,e_1+e_2-e_3  \rangle \subseteq \mathbb R^3=N\otimes \mathbb R.
$$
Now let $\mathbb Z_2$ act by $-1$ on $x_1$ and as $1$ on $x_2$ and $x_3$.   One can check easily using Propositions \ref{proGC} and \ref{proQGC} that $U_{\sigma,N}$ is Gorenstein, while $U_{\sigma,N}/\mathbb Z_2$ is not $\mathbb Q$-Gorenstein.
\end{exa}

\subsection{Reduction to the cyclic case}\label{SS:redcyc}

In this subsection, we show that in order to detect if a finite quotient $V/G$ of a normal $k$-variety has canonical or terminal singularities, it is enough to  check only that  the cyclic quotients $V/C$ are canonical or terminal  as $C$ varies among all the cyclic subgroups of $G$.  The result in the case where $V$ is smooth appears in a number of places (e.g.~\cite[p.44]{HM}, \cite[Thm.~3.21]{Kol}).   The argument for singular $V$ is the same, and while we expect the result  is well-known in this  case as well, we are unaware of a reference, and so we include the proof here for the convenience of the reader.  

\begin{teo}\label{T:redcyc}
Suppose that $G$ is a finite group acting on $V$, a normal scheme of finite type over  $k$.
Then $V/G$ has canonical (resp.~terminal) singularities if and only if  for every cyclic subgroup $C\le G$, the quotient $V/C$ has canonical (resp.~terminal) singularities.
\end{teo}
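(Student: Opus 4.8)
\textbf{Proof proposal for Theorem \ref{T:redcyc}.}

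The plan is to reduce the statement to the local study of a single point and its stabilizer, and then to invoke the standard characterization of canonical (resp.\ terminal) singularities via discrepancies on a common log resolution, combined with a ramification/pullback computation. First I would observe that the ``only if'' direction is essentially trivial: if $V/G$ has canonical (resp.\ terminal) singularities, then for any cyclic subgroup $C\le G$ the natural finite surjection $V/C\to V/G$ is \'etale in codimension one over the smooth locus complement issue — more precisely, one uses that $V/C\to V/G$ is a finite quotient map by the group $G/\!\sim$ acting on $V/C$ and that the canonical (resp.\ terminal) property descends under finite quotients, which is a formal consequence of the discrepancy inequality for quotients (see Theorem \ref{T:critsmooth} and the toric analogues above in the relevant local models, or the general statement in Reid's and Koll\'ar's treatments). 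So the content is the ``if'' direction.

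For the ``if'' direction, I would argue locally. Fix a closed point $\bar v\in V/G$ and a point $v\in V$ over it, with stabilizer $G_v\le G$; by Luna's slice theorem (or simply by passing to the completed local ring and using the \'etale-local structure of quotients, exactly as in Section 7 of this paper), the singularity of $V/G$ at $\bar v$ is \'etale-locally isomorphic to the quotient of a $G_v$-invariant slice $W\subseteq V$ through $v$ by the action of $G_v$. Since the canonical/terminal property is \'etale-local (and invariant under completion, as used repeatedly in \S\ref{SS:sing-Jac}), it suffices to prove: if $H$ is a finite group acting on a normal variety $W$ fixing a point $w$, and $W/\langle h\rangle$ has canonical (resp.\ terminal) singularities at the image of $w$ for every $h\in H$, then $W/H$ has canonical (resp.\ terminal) singularities at the image of $w$. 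This is now a purely local statement about a finite group fixing a point.

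The key step is the discrepancy computation. Take an $H$-equivariant resolution $f\colon \widetilde W\to W$ (which exists since we work in characteristic zero), so that $H$ acts on $\widetilde W$ and $\widetilde W/H$ maps to $W/H$; after further equivariant blow-ups one may assume $\widetilde W/H\to W/H$ factors through a resolution. For a prime divisor $E$ over $W/H$ one computes the discrepancy $a(E,W/H)$ by pulling back to $\widetilde W$: if $\widetilde E$ is a prime divisor over $\widetilde W$ lying above $E$, with ramification index $e=e(\widetilde E/E)$ along the quotient $\widetilde W\to \widetilde W/H$, then the Riemann--Hurwitz/discrepancy formula gives a relation of the shape
\[
a(E,W/H)+1=\frac{1}{e}\bigl(a(\widetilde E,W)+1\bigr)+(\text{correction from the branch divisor of }\widetilde W\to\widetilde W/H).
\]
The subtle point — and the one I expect to be the \textbf{main obstacle} — is controlling this correction term and the ramification index purely in terms of the \emph{cyclic} subgroups: the inertia group $I\le H$ of $\widetilde E$ is cyclic (it acts faithfully on the one-dimensional normal space to a general point of $\widetilde E$, hence embeds in $\Gm$), so the local computation of the discrepancy of $E$ over $W/H$ coincides with the local computation of the discrepancy of the corresponding divisor over $W/I$, where $I$ is cyclic. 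Thus $a(E,W/H)\ge 0$ (resp.\ $>0$) follows from the hypothesis applied to the cyclic group $I$. Making the ``coincides'' precise requires comparing $W/H$ and $W/I$ \'etale-locally near the generic point of (the center of) $E$: away from the branch locus of $W/I\to W/H$ the two quotients agree \'etale-locally in codimension one along that center, which is exactly what is needed since discrepancies are computed from the behavior at codimension-one points of the resolution. I would carry this out by choosing the resolution $\widetilde W$ so that $\bigcup_{h\ne 1}\mathrm{Fix}(h)$ is a simple normal crossings divisor, reducing to the snc toric-like local model, and then quoting Theorem \ref{T:critsmooth} (or, in the singular toric case arising in our application, Proposition \ref{P:crit-cyc} and Lemma \ref{L:map-smooth}) to finish. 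The terminal case is identical, replacing ``$\ge$'' by ``$>$'' throughout.
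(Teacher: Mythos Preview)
Your treatment of the ``only if'' direction is incorrect. You claim that $V/C\to V/G$ is a quotient by a group ``$G/\!\sim$'' and that canonicity ``descends under finite quotients''. Neither assertion is right: $C$ need not be normal in $G$, so $V/C\to V/G$ is generally only a finite morphism, not a group quotient; and even when $C\trianglelefteq G$, the statement that $(V/C)/(G/C)=V/G$ is canonical would go the \emph{wrong way} for what you need. Canonicity does not in general pull back along finite covers, so this direction is not a formality. In the paper, the ``only if'' direction is proved by the same discrepancy technique as the other: given a bad divisor $E'$ over $V/C$, lift it to a divisor $F$ on the normalization $\widetilde V$ of the resolution in $k(V)$, then use Zariski--Abhyankar to realize $F$ as lying over a divisor $E$ on some birational model of $V/G$, and compare discrepancies via Koll\'ar's formula.

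For the ``if'' direction you have identified the key point: the inertia group of a prime divisor on a (normal, hence generically regular in codimension~$1$) model over $V$ is cyclic, and the discrepancy over $V/G$ along that divisor depends only on this cyclic inertia. But your route is more elaborate than necessary, and your discrepancy formula is off. The paper does not pass through Luna's slice theorem or an $H$-equivariant resolution; it simply resolves $X=V/G$, normalizes the resolution in $k(V)$ to obtain $\widetilde V$, and applies Koll\'ar's formula
\[
a(E,V/G)+1=\frac{a(F,V)+1}{|C_F|},
\]
where $C_F\le G$ is the (cyclic) inertia of $F$. There is no extra ``correction from the branch divisor''---the ramification is entirely encoded in $|C_F|$---so the obstacle you flag does not arise. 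Applying the same formula to the quotient by $C_F$ gives $a(E',V/C_F)+1=(a(F,V)+1)/|C_F|$, whence $a(E',V/C_F)=a(E,V/G)$ and the conclusion follows immediately. Your proposed endgame (arranging an snc fixed locus and invoking Reid--Tai or the toric criteria) is unnecessary once the clean formula is in hand.
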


\begin{proof}
We will follow the proof of \cite[Thm.~3.21]{Kol}, which deals with the case $V=\bbA_k^n$.
Suppose first that $X=V/G$ does not have canonical (resp.~terminal)  singularities.  Let $\widetilde X\to X$ be a resolution of singularities, and let $E\subseteq \widetilde X$ be a prime divisor such that the discrepancy $a(E,X)<0$ (resp.~$\le 0$).    Let $p:\widetilde V\to \widetilde X$ be the normalization of $\widetilde X$ in the field of fractions of $V$, and let $F\subseteq \widetilde V$ be a prime divisor dominating $E$.   We have a commutative diagram
\begin{equation}\label{E:diag1}
\xymatrix{
\widetilde V \ar@{->}[r]  \ar@{->}[d]_p& V  \ar@{->}[d]\\
\widetilde X \ar@{->}[r]& X=V/G\\
}
\end{equation}
where the vertical morphisms are finite and the horizontal ones are birational.
It is computed in \cite[(2.42.4)]{Kol} that  the discrepancies of $F$ and $E$ are related by the formula
\begin{equation}\label{E:discp}
a(E,X)+1=\frac{a(F,V)+1}{|C_F|}.
\end{equation}
  The group $G$ acts on the field of fractions of $V$, and one can easily check the action preserves integrality, so $G$ also acts on $\widetilde V$ and $\widetilde X=\widetilde V/G$. Let  $C_F$ be the subgroup of $G$ acting as the identity on $F$.  Since $\widetilde V$ is generically smooth along $F$, the subgroup $C_F\le G$ is cyclic.
  The diagram \eqref{E:diag1} factors as follows
  \begin{equation}\label{E:diag2}
\xymatrix{
\widetilde V \ar@{->}[r]  \ar@{->}[d]_q& V   \ar@{->}[d]\\
\widetilde V/C_F \ar@{->}[r] \ar@{->}[d]_r&V/C_F \ar@{->}[d]\\
 \widetilde X =\widetilde V/G \ar@{->}[r]& X=V/G\\
}
\end{equation}
where again the vertical morphisms are finite and the horizontal ones are birational.
Consider the prime divisor $E'=q(F)$, which   is exceptional over  $V/C_F$. By applying formula \eqref{E:discp} to the morphism $q$, we get that
\begin{equation}\label {E:discp2}
a(E',V/C_F)+1=\frac{a(F,V)+1}{|C_F|},
\end{equation}
which together with \eqref{E:discp} implies that
$
a(E',V/C_F)=a(E,X) <0 \ \ (\text{resp.}\ \le 0).
$
Consequently, we see that $V/C_F$ does not have canonical (resp.~terminal) singularities.  

Conversely, suppose there is a cyclic group $C\le G$ such that $V/C$ does not have canonical (resp.~terminal)  singularities.    Let $(V/C)^\sim\to V/C$ be a resolution of singularities, and suppose that $E'$ is an exceptional divisor such that $a(E',V/C)<0$ (resp.~$\le 0$).    Let $\widetilde V$ be  the integral closure of $(V/C)^\sim$ in the field of fractions of $V$, and let $F\subseteq \widetilde V$ be a prime divisor dominating $E'$.    Again we obtain \eqref{E:discp2}.    Now using a result of Zariski and Abhyankar \cite[Lem.~2.22, p.50]{Kol} there is a diagram

\begin{equation}\label{E:diag3}
\xymatrix{
\widetilde V \ar@{->}[r]  \ar@{-->}[d]_p& V  \ar@{->}[d]\\
\widetilde X \ar@{->}[r]& X=V/G\\
}
\end{equation}
where the bottom morphism is birational,  $p$ is the induced  rational map, and  $F$ dominates a prime divisor $E$ of $\widetilde X$.  The computation of  \cite[(2.42.4)]{Kol} holds   (see especially the discussion at the end of the proof of \cite[Cor.~2.43, p.66)]{Kol}), giving \eqref{E:discp}.   Thus we have $a(E,X)=a(E',V/C)<0$ (resp.~$\le 0$), and it follows that $X$ does not have canonical (resp.~terminal) singularities. 
\end{proof}

\bibliography{SingUnivJac}
\end{document}